\documentclass[11pt]{amsart}
\usepackage{amsmath,amssymb,hyperref}
\usepackage{amsfonts}
\usepackage{enumerate}
\usepackage{amsmath, amsthm}
\usepackage{amscd}
\input xy
\xyoption{all}
\hypersetup{pdfpagemode=FullScreen, colorlinks=true}

\newtheorem{thm}{Theorem}[section]

\newtheorem{prop}[thm]{Proposition}
\newtheorem{lemma}[thm]{Lemma}

\newtheorem{cor}[thm]{Corollary}

\theoremstyle{definition}
\newtheorem{defi}[thm]{Definition}
\newtheorem*{remark}{Remark}

\newtheorem*{setup}{Setup}

\newcommand{\bc}{{\mathbb C}}

\newcommand{\br}{{\mathbb R}}
\newcommand{\bq}{{\mathbb Q}}
\newcommand{\bz}{{\mathbb Z}}
\newcommand{\fl}{\mathcal{F}}
\newcommand{\ra}{\rightarrow}

\let \cal \mathcal
\begin{document}
\title[Bloch groups]
{Homological and Bloch invariants for $\bq$-rank one spaces and flag structures}
\date{}

\author{Inkang Kim}
\address{School of Mathematics,
KIAS, Heogiro 85, Dongdaemun-gu Seoul, 130-722, Republic of Korea}
\email{inkang@kias.re.kr}
\author{Sungwoon Kim}
\email{sungwoon@kias.re.kr}
\author{Thilo Kuessner}
\email{kuessner@kias.re.kr}
\footnotetext[1]{2000 {\sl{Mathematics Subject Classification.}}
22E46, 57R20, 53C35}
\footnotetext[2]{{\sl{Key words and phrases.}}
Group (co)holomology, Bloch group, Semisimple Lie group}
\begin{abstract}
We use group homology to define invariants in algebraic K-theory and in an analogue of the Bloch group for $\bq$-rank one lattices and for some other geometric structures. We also show that the Bloch invariants of CR structures and of flag structures can be recovered by a fundamental class construction.
\end{abstract}

\footnotetext[1]{I. Kim gratefully acknowledges the partial support
of NRF grant  (2010-0024171) and a warm support of IHES
during his stay.}

\maketitle

\tableofcontents
\section{Introduction}
Fundamental class constructions and Bloch invariants are by now a
classical theme in the topological study of hyperbolic
$3$-manifolds, going back to the work of Dupont and Sah on
scissors congruences and more recently the work of Neumann, Yang
and Zickert on Bloch invariants. For a hyperbolic $3$-manifold
$M=\Gamma\backslash \mathbb H^3$ with $\Gamma\subset
\mathrm{PSL}(2,\bc)$ one can on the one hand consider its
$\mathrm{PSL}(2,\bc)$-fundamental class
$[M]_{\mathrm{PSL}(2,\bc)}$, that is the image of the fundamental
class $[M]\in H_3\left(M\right)\cong H_3(\Gamma)$ in
$H_3(\mathrm{PSL}(2,{\bc}))$, and on the other hand one can use
ideal triangulations or more generally degree one ideal
triangulations to define an invariant $\beta(M)$ in the Bloch
group ${\mathcal{B}}(\bc)$. In \cite{NY} it was shown that one can
recover the volume and the Chern-Simons invariant mod $\bq$ from
$\beta(M)$. (In later work Neumann constructed an invariant in an
extended Bloch group, from which one can recover the Chern-Simons
invariant mod $\bz$.)

The approach via ideal triangulations is better suited for doing
practical calculations, see for example \cite{NY}. On the other
hand the fundamental class approach is useful for theoretical
considerations, e.g.\ to study the behaviour of hyperbolic volume
under cut and paste in \cite{Ku2}. By the Bloch-Wigner Theorem
(proved in more generality by Dupont-Sah in \cite[Appendix A]{ds})
there is an isomorphism
$$H_3(\mathrm{PSL}(2,\bc),\bz)/Torsion\cong {\mathcal{B}}(\bc),$$
and this isomorphism sends $[M]_{\mathrm{PSL}(2,\bc)}$ to
$\beta(M)$. (One may pictorially think of a triangulation whose
vertices are moved to infinity to produce an ideal triangulation.
In some weak sense this picture can be made precise, see
\cite{Kue}.) In particular the Bloch invariant is determined by
the $\mathrm{PSL}(2,\bc)$-fundamental class.

The construction of the Bloch invariant was generalized to
higher-dimensional hyperbolic manifolds in \cite[Section 8]{NY}.
On the other hand Goncharov \cite[Section 2]{G} generalised the
fundamental class construction to get - associated to an
odd-dimensional hyperbolic manifold $M^{2k-1}$ and a spinor
representation $\mathrm{SO}(2k-1,1)\rightarrow \mathrm{GL}(n,\bc)$
- an element in $H_{2k-1}(\mathrm{GL}(n,\bc))$ such that
application of the Borel class recovers (a fixed multiple of) the
volume. In \cite[Section 3]{G} he also used ideal triangulations
to construct an extension $m(M^{2k-1}) \in
Ext^1_{{\mathcal{M}}_{\bq}} (Q(0);Q(k))$ in the category of mixed
Tate motives over ${\bq}$ and thus an element in
$K_{2k-1}(\overline{\bq})\otimes\bq$ according to Beilinson's
description of K-theory of fields.
(In degree 3 one has $K_3^{ind}(\bc)\otimes\bq={\mathcal{B}}(\bc)\otimes\bq$ and one recovers the Bloch invariant from this $K$-theoretic approach.)

In \cite{Ku} the third-named author generalized Goncharov's
(first) construction to finite-volume locally symmetric spaces of
noncompact type $M=\Gamma\backslash G/K$ which are either closed
or of $\br$-rank one. To each representation $\rho:G\rightarrow
\mathrm{SL}(n,\bc)$ the construction yields an element in
$H_*(\mathrm{SL}(n,\overline{\bq}))$ or, after a suitable
projection, an element in $$PH_*(\mathrm{GL}(\overline{\bq}))\cong
K_*(\overline{\bq})\otimes\bq$$ such that application of the Borel
class (to either of the two elements) yields a multiple $c_\rho
\mathrm{Vol}(M)$ of the volume $\mathrm{Vol}\left(M\right)$. The
factor $c_\rho$ depends only on $\rho$, in particular one can
recover the volume if $c_\rho\not=0$. Moreover, \cite[Section
3]{Ku} provides a complete list of fundamentals representations
$\rho:G\rightarrow \mathrm{SL}(n,\bc)$ with $c_\rho\not=0$, for
example one has $c_\rho\not=0$ whenever $dim\left(G/K\right)\equiv
3\ mod\ 4$ and $\rho\not=id$. However, in the noncompact case, the
only $\br$-rank one examples with $c_\rho\not=0$ were
(odd-dimensional) real-hyperbolic manifolds.

In this paper we further generalize Goncharov's construction to
${\bq}$-rank one spaces. That means, for a $\bq$-rank one locally
symmetric space of noncompact type $M=\Gamma\backslash G/K$ we
construct elements $$\overline{\gamma}(M)\in
H_*(\mathrm{SL}(n,\overline{\bq}))$$ and $$\gamma(M)\in
K_*(\overline{\bq})\otimes\bq$$ such that application of the Borel
class yields again $c_\rho \mathrm{Vol}(M)$. Compare
\hyperref[prop:preimage]{Proposition \ref*{prop:preimage}} and
\hyperref[thm:Ktheory]{Theorem \ref*{thm:Ktheory}} for the precise
statements. Thus one can get many non-compact non-hyperbolic
examples with nontrivial invariants.

The fundamental class construction shall be useful for deriving
general results about the relation of topology and volume. For
practical computations however the Bloch group approach appears to
be more feasible, not only in 3-dimensional hyperbolic geometry
\cite{NY} but also in the study of CR-structures \cite{FW} or flag
structures \cite{bfg}.

In the 3-dimensional hyperbolic case, Neumann-Yang constructed the
Bloch invariant \cite[Definition 2.5]{NY} in the so-called
pre-Bloch group ${\mathcal{P}}(\bc)$ and then proved in
\cite[Theorem 6]{NY} that it actually belongs to the Bloch group
${\mathcal{B}}(\bc)\subset {\mathcal{P}}(\bc)$. The pre-Bloch
group ${\mathcal{P}}(\bc)$ satisfies a natural isomorphism
$${\mathcal{P}}(\bc)\cong H_3(C_*(\partial_\infty G/K)\otimes_{\bz
G}\bz)$$ for $G/K=\mathrm{SL}(2,\bc)/\mathrm{SU}(2)=\mathbb H^3$.
Thus it is natural to define a Bloch invariant of
higher-dimensional locally symmetric spaces $M^d=\Gamma\backslash
G/K$ (and representations $\rho:G\rightarrow \mathrm{SL}(n,\bc)$)
as an element in $$H_d(C_*(\partial_\infty
\mathrm{SL}(n,\bc)/\mathrm{SU}(n))\otimes_{\bz
\mathrm{SL}(n,\bc)}\bz).$$ In \cite{Kue} this was done for
${\br}$-rank one symmetric spaces and it was shown that the Bloch
invariant is the image of the Goncharov invariant
$\overline{\gamma}\left(M\right)$ under a naturally defined
evaluation homomorphism which generalizes the homomorphism from
the Bloch-Wigner Theorem. The construction of the generalized
Bloch invariant uses proper ideal fundamental cycles since the
existence of ideal triangulations is unclear in general. The proof
of well-definedness of the Bloch invariant (i.e.\ indepencence
from the chosen proper ideal fundamental cycle, \cite[Lemma
3.4.1]{Kue}) was building on the equality $H_d(C_*(\partial_\infty
G/K)\otimes_{\bz \Gamma}\bz)\cong \bz$ for lattices $\Gamma\subset
G$, which in the $\mathbb R$-rank one case can be proved by an
immediate generalization of the results of Neumann-Yang (who
proved this equality in \cite{NY} for hyperbolic $3$-space).
However it is unclear how to generalize this argument to the
higher rank case. Therefore we avoid this point in Section 8 by
directly defining the Bloch invariant
$$\beta(M)\in H_d(C_*(\partial_\infty \mathrm{SL}(n,\bc)/\mathrm{SU}(n))\otimes_{\bz
\mathrm{SL}(n,\bc)}\bz)$$ for locally symmetric spaces (either closed or
of $\bq$-rank one) as the image of the Goncharov invariant
$\overline{\gamma}(M)$ under the evaluation homomorphism.

In Section 9 we consider a similar construction for convex
projective manifolds. We hope to further exploit this in other
papers.

In Section 10 we prove that also the Bloch invariants of CR
structures (as defined by Falbel-Wang in \cite{FW}) and of flag
structures (as defined by Bergeron-Falbel-Guilloux in \cite{bfg})
can be recovered from a fundamental class construction. We apply
this to prove that these Bloch invariants are preserved under
certain cut-and-paste operations.

\section{Basics on Group (co)homology}
\subsection{Group homology} For a topological group $G$, let
$G^\delta$ denote the group with discrete topology. Let $BG^\delta$ denote
the simplicial set whose $k$-simplices are $k$-tuples
$(g_1,\ldots,g_k)$ with a natural boundary operator $\partial$:
{\setlength\arraycolsep{2pt}
\begin{eqnarray*}
\partial(g_1,\ldots,g_k) = (g_2,\ldots,g_k) &+& \sum_{i=1}^{k-1}(-1)^i(g_1,\ldots,g_ig_{i+1},\ldots,g_k) \\
&+& (-1)^k(g_1,\ldots,g_{k-1}).
\end{eqnarray*}}
It forms a chain complex $C_*^{simp}(BG^\delta)$ of $BG^\delta$ whose
homology with a coefficient ring $R$ is defined as the group
homology
$$H_*(G,R)=H_*^{simp}(BG^\delta,R)=H_*^{simp}(C_*(BG^\delta\otimes_\bz R),
\partial\otimes 1).$$
Throughout the paper, $BG$ will be understood as $BG^\delta$.

Let $M$ be a Riemannian manifold of nonpositive sectional curvature and
$x_0\in M$, a lift $\tilde x_0\in\widetilde M$ of $x_0$ be fixed. Any {\em ordered} tuple of vertices in $\widetilde{M}$ determines a unique straight simplex. A singular simplex $\sigma\in C_*(M)$ is
straight if some (hence any) lift $\tilde\sigma \in C_*(\widetilde M)$ is straight.
Let $C_*^{str,x_0}(M)$ be the chain complex of straight simplices
with all vertices $x_0$.

Set $\Gamma=\pi_1(M,x_0)$. Then there are two canonical homomorphisms $\Psi:C_*^{simp}(B\Gamma)\ra
C_*^{str,x_0}(M)$ defined by
$$ \Psi(g_1,\ldots,g_k)=\pi(str(\tilde x_0, g_1\tilde x_0, g_1g_2\tilde x_0,\ldots,g_1\cdots g_k \tilde x_0))$$
and $\Phi:C_*^{str,x_0}(M)\ra C_*^{simp}(B\Gamma)$ defined by
$$ \Phi(\sigma)=([\sigma|_{\zeta_1}],\ldots, [\sigma|_{\zeta_k}])$$ where
$e_0,\ldots,e_k$ are the vertices of the standard simplex
$\Delta^k$, $\zeta_i$ is the standard sub-1-simplex with $\partial
\zeta_i = e_i -e_{i-1}$, each $[\sigma|_{\zeta_i}]\in
\pi_1(M,x_0)=\Gamma$ is the homotopy class of $\sigma|_{\zeta_i}$
and $\pi : \widetilde M \ra M$ is the universal covering map of
$M$. It is easy to show that $\Psi$ and $\Phi$ are chain
isomorphisms inverse to each other.

The inclusion
$$i:C_*^{str,x_0}(M)\subset C_*(M)$$ and the straightening (\cite[Section 2.1]{Ku})
$$str:C_*(M)\ra C_*^{str,x_0}(M)$$ are chain homotopy inverses.
Hence we have a chain homotopy equivalence, called the
\emph{Eilenberg-MacLane map}
$$EM: C_*^{simp}(B\Gamma)\ra C_*(M).$$ We will frequently use the induced isomorphism
$$EM_*^{-1}=\Phi_*\circ str_*: H_*(M,\bz)\ra H_*^{simp}(B\Gamma, \bz).$$
The geometric realization $|B\Gamma|$ is a $K(\Gamma,1)$, thus there is a classifying map $h^M:M\ra|B\Gamma|$ which
induces an isomorphism on $\pi_1$ level. The inclusion map of
simplices $i:C_*^{simp}(B\Gamma)\ra C_*(|B\Gamma|)$, induces an isomorphism
$$i_*:H_*^{simp}(B\Gamma,\bz)\ra H_*(|B\Gamma|,\bz)$$ such that
$h^M_*=i_*\circ EM_*^{-1}$ if $M$ is aspherical and of the homotopy type of a CW complex (which is always the case for Riemannian manifolds of nonpositive sectional curvature).

For a commutative ring $A\subset \bc$ with unit, let
$\mathrm{GL}(A)=\cup_{n=1}^\infty \mathrm{GL}(n,A)$ be the increasing union,
and $|B\mathrm{GL}(A)|$ its classifying space as above.

Let $\rho:\Gamma\ra \mathrm{GL}(A)$ be a
representation. This induces $B_\rho:B\Gamma\ra B\mathrm{GL}(A)$
and $|B_\rho|:|B\Gamma|\ra |B\mathrm{GL}(A)|$. The composition
$$ \xymatrixcolsep{3pc}\xymatrix{H_*(M,\bq)\ar[r]^-{EM_*^{-1}} & H_*^{simp}(B\Gamma,\bq)\ar[r]^-{(B_\rho)_*} &H_*^{simp}(B\mathrm{GL}(A),\bq)
}$$ induces a map $$(H_\rho)_*: H_*(M,\bq)\ra
H_*^{simp}(B\mathrm{GL}(A),\bq).$$ If $M$ is a closed, oriented
and connected $d$-dimensional manifold, $(H_\rho)_d[M]$ will play
an important role for us where $[M]$ is the fundamental class in $
H_d(M,\bq) \cong \bq$.

\subsection{Volume class and Borel class}\label{volume}
Let $G$ be a noncompact semisimple, connected Lie group. Let
$X=G/K$ be the associated symmetric space of dimension $d$ with a
maximal compact subgroup $K$ of $G$. Let us denote by
$H^*_c(G,\br)$ the continuous cohomology of $G$. The comparison
map $comp: H^*_c(G,\br)\ra H^*_{simp}(BG,\br)$ is defined by the
cochain map
$$comp(f)(g_1,\ldots,g_k)=f(1,g_1,g_1g_2,\ldots,g_1g_2\cdots
g_k)$$ for a $G$-invariant cochain $f:G^{k+1} \ra \br$. Fix a
point $x\in X$. The volume class $v_d\in H_c^d(G,\br)$ is defined
by the cocycle $$\nu_d(g_0,\ldots,g_d)=algvol(str(g_0x,\ldots,g_d
x)):=\int_{str(g_0x,\ldots,g_d x)} dvol_X, $$ where $dvol_X$ is
the $G$-invariant Riemannian volume form on $X$, and $str$ is the
geodesic straightening of a simplex with those vertices. (That is $str(g_0x,\ldots,g_d x)$ is the unique straight simplex with the given {\em ordered} set of vertices.) Under the comparison map
$$comp(v_d)(g_1,\ldots,g_d)=algvol(str(x,g_1x,\ldots,g_1\cdots
g_d x)).$$

Then it is not difficult to show that if $N=\Gamma\backslash X$ is
a closed locally symmetric space, with $j:\Gamma \ra G$ the
inclusion, then $$\mathrm{Vol}(N)= \langle comp(v_d), B_j \circ
EM^{-1}_d[N] \rangle.$$ For a detailed proof of this, see
\cite[Theorem 1]{Ku}.

Let $\mathfrak{g}$ and $\mathfrak{k}$ be the Lie algebra of $G$
and $K$ respectively. If
$\mathfrak{g}=\mathfrak{k}\oplus\mathfrak{p}$ is a Cartan
decomposition, then the Lie algebra of the compact dual $G_u$ of
$G$ is $$\mathfrak{g}_u=\mathfrak{k}\oplus i \mathfrak{p}.$$ Note
that the relative Lie algebra cohomology
$H^*(\mathfrak{g},\mathfrak{k})$ is the cohomology of the complex
of $G$-invariant differential forms on $G/K$ and there is the Van Est
isomorphism
$$\mathcal J: H^*_c(G,\br)\ra H^*(\mathfrak{g},\mathfrak{k}).$$

Let $I_S^k(G_u)$ and $I_A^k(G_u)$ be
the space of $ad$-invariant symmetric and
antisymmetric multilinear $k$-forms on $\mathfrak{g}_u$.
There are isomorphisms $$\Phi_A:I_A^k(G_u)\ra H^k(G_u,\br),$$ and
the Chern-Weil isomorphism $$\Phi_S:I_S^k(G_u)\ra
H^{2k}(BG_u,\br),$$ see for example \cite[Section 5]{bur}.

When  $G_u=\mathrm{U}(n)$ is a unitary group,
there is $$\mathrm{Tr}_k(A_1,\ldots,A_k)=\frac{1}{(2\pi
i)^k}\frac{1}{k!}\sum_{\sigma\in S_k}
\mathrm{Tr}(A_{\sigma(1)}\cdots A_{\sigma(k)})$$ so that
$$C_k=\Phi_S(\mathrm{Tr}_k)\in H^{2k}(B\mathrm{U}(n),\br)$$ is the $2k$-th component of
the \emph{universal Chern character}.

For the fibration $G_u\ra EG_u\ra BG_u$ and an associated
transgression map $\tau$ from a subspace of $H^{2k-1}(G_u,\bz)$ to
$H^{2k}(BG_u,\bz)/ker(s)$, where $s$ is the suspension homomorphism
\cite{Bo1}, there is a homomorphism \cite{Cartan}
$$R:I_S^k(G_u)\ra I_A^{2k-1}(G_u)$$ such that $\tau \circ
\Phi_A\circ R=\pi\circ \Phi_S$ where $\pi:H^{2k}(BG_u,\bz)\ra
H^{2k}(BG_u,\bz)/ker(s)$. Then the Borel class is
$$b_{2k-1}=\Phi_A(R(\mathrm{Tr}_k))\in
H^{2k-1}(\mathrm{U}(n),\br)=H^{2k-1}(\mathfrak{u}(n),\br).$$ The
last equality holds since $\mathrm{U}(n)$ is a compact manifold,
$H^*(\mathrm{U}(n),\br)$ is the de Rham cohomology of
$\mathrm{U}(n)$, and by averaging it is isomorphic to the
cohomology of the complex of  $\mathrm{U}(n)$-invariant differential
forms on $\mathrm{U}(n)$. On the other hands, since
$(\mathrm{GL}(n,\bc))_u=\mathrm{U}(n)\times \mathrm{U}(n)$ and via
Van Est isomorphism {\setlength\arraycolsep{2pt}
\begin{eqnarray*}
H^*_c(\mathrm{GL}(n,\bc),\br) &=& H^*(\mathfrak{gl}(n,\bc), \mathfrak{u}(n)) \\
&=& H^*(\mathfrak u(n)\oplus \mathfrak u(n),\mathfrak u(n))=H^*(\mathfrak u(n),\br)
\end{eqnarray*}}
we may consider
$$b_{2k-1}\in H^{2k-1}_c(\mathrm{GL}(n,\bc),\br).$$

\subsection{Volume for compact locally symmetric manifolds}

It is a standard fact that for $d=dim(G/K)$, $H_c^d(G,\br) \cong \br$ via Van Est
isomorphism. If $\Gamma$ is a uniform lattice in $G$, then
$$H^d(\Gamma\backslash G/K,\br)\cong H^d(\Gamma,\br)\cong H^d_c(G,\br) \cong \br.$$
Hence the volume class $v_d$ as defined in Section 2.2 can be viewed as a
generator of $H^d(\Gamma,\br)$.

\begin{prop}\label{evaluation}
For a symmetric space $G/K$ of noncompact type with odd dimension
$d=2m-1$, and a representation $\rho: \Gamma\ra \mathrm{GL}(n,\bc)$
with a closed manifold $N=\Gamma\backslash G/K$, there exists a
constant $c_\rho \in \br$ such that
$$ \langle
 comp(b_d),  H_\rho[N]\rangle =c_\rho \mathrm{Vol}(N).$$
If $\rho:\Gamma\ra\mathrm{GL}(n,\bc)$ factors over a representation $\rho_0:G\ra\mathrm{GL}(n,\bc)$, then $c_\rho$ depends only on $\rho_0$.
\end{prop}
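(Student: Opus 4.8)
The plan is to extract the constant $c_\rho$ from the fact that both sides of the asserted identity define elements of the one-dimensional vector space $H^d(\Gamma,\br)^\vee$, i.e.\ linear functionals on $H_d(\Gamma,\br)\cong\br$. First I would observe that, since $d=2m-1$ is odd and $N$ is a closed locally symmetric space, Section 2.3 gives $H^d(\Gamma,\br)\cong H^d_c(G,\br)\cong\br$, with the volume class $comp(v_d)$ a generator; dually $H_d(\Gamma,\br)\cong\br$ is generated by $EM_d^{-1}[N]$ up to the scalar $\mathrm{Vol}(N) = \langle comp(v_d), B_j\circ EM_d^{-1}[N]\rangle$ from Section 2.2. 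The left-hand side $\langle comp(b_d), H_\rho[N]\rangle$ is, by definition of $H_\rho = (B_\rho)_*\circ EM_*^{-1}$, equal to $\langle (B_\rho)^*comp(b_d),\ EM_d^{-1}[N]\rangle$, where $(B_\rho)^*comp(b_d)\in H^d(\Gamma,\br)$ is the pullback of the (comparison image of the) Borel class under $\rho$.

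The key step is then purely linear algebra in the one-dimensional space $H^d(\Gamma,\br)$: there is a unique scalar $c_\rho\in\br$ with $(B_\rho)^*comp(b_d) = c_\rho\, comp(v_d)$ in $H^d(\Gamma,\br)$. Evaluating both sides against $B_j\circ EM_d^{-1}[N]$ — or rather against $EM_d^{-1}[N]$, using that $(B_\rho)^*comp(b_d)$ already lives in $H^d(\Gamma,\br)$ — yields $\langle comp(b_d), H_\rho[N]\rangle = c_\rho\langle comp(v_d), B_j\circ EM_d^{-1}[N]\rangle = c_\rho\mathrm{Vol}(N)$, which is the claim. To be careful I should note that $c_\rho$ defined this way a priori depends on $\Gamma$ and $\rho$ only through the pullback class $(B_\rho)^*comp(b_d)\in H^d(\Gamma,\br)\cong\br$ and the choice of generator.

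For the last sentence, suppose $\rho$ factors as $\Gamma\xrightarrow{j}G\xrightarrow{\rho_0}\mathrm{GL}(n,\bc)$. Then $B_\rho = B_{\rho_0}\circ B_j$ and $comp$ commutes with pullback, so $(B_\rho)^*comp(b_d) = (B_j)^*\big((B_{\rho_0})^*comp(b_d)\big) = (B_j)^*comp(\rho_0^*b_d)$, where $\rho_0^*b_d\in H^d_c(G,\br)$. Since $H^d_c(G,\br)\cong\br$ is generated by $v_d$ (Van Est, $d=\dim G/K$), we have $\rho_0^*b_d = c_{\rho_0} v_d$ for a scalar $c_{\rho_0}$ depending only on $\rho_0$; applying $(B_j)^*\circ comp$ gives $(B_\rho)^*comp(b_d) = c_{\rho_0}\,(B_j)^*comp(v_d) = c_{\rho_0}\,comp(v_d)$ in $H^d(\Gamma,\br)$ (the last equality is exactly the normalization behind the volume formula of Section 2.2, namely that $comp(v_d)$ pulls back to the class computing $\mathrm{Vol}$). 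Hence $c_\rho = c_{\rho_0}$ is independent of $\Gamma$.

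The main obstacle I anticipate is not the linear algebra but the bookkeeping that $comp(b_d)$, which naturally lives in $H^d_{simp}(B\mathrm{GL}(n,\bc),\br)$, really does pull back into $H^d(\Gamma,\br)$ and there coincides with a multiple of $comp(v_d)$; this requires using the Van Est isomorphism and the identification $H^*_c(\mathrm{GL}(n,\bc),\br)\cong H^*(\mathfrak u(n),\br)$ from Section 2.2 to see $\rho_0^*b_d\in H^d_c(G,\br)$ as a multiple of $v_d$ at the level of invariant forms (equivalently, that the restriction of the Borel cocycle to geodesic simplices in $G/K$ is proportional to the volume cocycle), together with checking compatibility of $comp$, Van Est, and pullback. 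Once that compatibility is in place, the constant $c_\rho$ and its dependence only on $\rho_0$ follow formally.
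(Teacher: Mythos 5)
Your proposal is correct and follows essentially the same route as the paper: the paper also defines $c_\rho$ by writing $\rho_c^*(b_d)=c_\rho v_d$ in the one-dimensional space $H^d(\Gamma,\br)=\br\cdot v_d$ and then evaluates against the fundamental class via the straightening/comparison formalism of Section 2.2. Your argument for the second claim (factoring through $H^d_c(G,\br)\cong\br\cdot v_d$ so that $\rho_0^*b_d=c_{\rho_0}v_d$ before restricting to $\Gamma$) is exactly the content of the reference the paper cites for that step.
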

\begin{proof}
A representation $\rho :\Gamma \ra \mathrm{GL}(n,\bc)$ induces a
homomorphism $$\rho_c^* : H^d_c(\mathrm{GL}(n,\bc),\br)\ra
H^d(\Gamma,\br).$$ Since $\rho^*_c(b_d)\in H^d(\Gamma,\br)=\br
\cdot v_d$, there is a constant $c_\rho \in \br$ such that
$\rho_c^*(b_d)=c_\rho v_d$. Hence for a fundamental cycle
$\sum_{i=1}^l a_i\sigma_i$ of $N$, {\setlength\arraycolsep{2pt}
\begin{eqnarray*}
\langle comp(b_d), H_\rho[N]\rangle &=& \langle comp(b_d), B_\rho \circ EM^{-1}_d [N] \rangle \\
&=& \langle comp(\rho^*_c b_d), \Phi_*\circ str_*[N]\rangle \\
&=& c_\rho \sum_{i=1}^l a_i \langle comp(\nu_d), \Phi_*(str\sigma_i) \rangle \\
&=& c_\rho \sum_{i=1}^l a_i \cdot algvol(str \sigma_i) \\
&=& c_\rho \mathrm{Vol}(N)
\end{eqnarray*}}
We refer the reader to \cite[Theorem 2]{Ku} for the proof of the second claim.
\end{proof}

\section{$\bq$-rank $1$ locally symmetric spaces}

In this section, we consider only  $\bq$-rank $1$ lattices $\Gamma\subset G$.
We first collect some definitions and results about $\bq$-rank $1$ lattices.

\subsection{Arithmetic lattices}

Let $G$ be a noncompact, semisimple Lie group with trivial center and no compact factors.
Then one may define arithmetic lattices in the following way.

\begin{defi}\label{def:arithmetic}
A lattice $\Gamma$ in $G$ is called \emph{arithmetic} if there are
\begin{itemize}
\item[(1)] a semisimple algebraic group $\mathbf{G} \subset \mathrm{GL}(n,\mathbb{C})$ defined over $\bq$ and
\item[(2)] an isomorphism $\varphi : \mathbf{G}(\br)^0 \ra G$
\end{itemize}
such that $\varphi(\mathbf{G}(\mathbb{Z}) \cap \mathbf{G}(\br)^0)$ and $\Gamma$ are commensurable.
\end{defi}

It is well-known due to Margulis \cite{Ma91} that all irreducible
lattices in higher rank Lie groups are arithmetic. The $\bq$-rank
of a semisimple algebraic group $\mathbf{G}$ is defined as the
dimension of a maximal $\bq$-split torus of $\mathbf{G}$. For an
arithmetic lattice $\Gamma$ in $G$, $\bq$-rank$(\Gamma)$ is
defined by the $\bq$-rank of $\mathbf{G}$ where $\mathbf{G}$ is an
algebraic group as in Definition \ref{def:arithmetic}.

A closed subgroup $\mathbf{P} \subset \mathbf{G}$ defined over
$\bq$ is called \emph{rational parabolic subgroup} if $\mathbf{P}$
contains a maximal, connected solvable subgroup of $\mathbf{G}$.
For any rational parabolic subgroup $\mathbf{P}$ of $\mathbf{G}$,
one obtains the \emph{rational Langlands decomposition} of
$P=\mathbf{P}(\br)$:
$$P = N_{\mathbf{P}} \times A_\mathbf{P} \times M_\mathbf{P},$$
where $N_\mathbf{P}$ is the real locus of the unipotent radical
$\mathbf{N}_\mathbf{P}$ of $\mathbf{P}$, $A_\mathbf{P}$ is a
stable lift of the identity component of the real locus of the
maximal $\bq$-torus in the Levi quotient $\mathbf{P} /
\mathbf{N}_\mathbf{P}$ and $M_\mathbf{P}$ is a stable lift of the
real locus of the complement of the maximal $\bq$-torus in
$\mathbf{P} / \mathbf{N}_\mathbf{P}$.

Let $X=G/K$ be the associated symmetric space of noncompact type
with a maximal compact subgroup $K$ of $G$. Write
$X_\mathbf{P}=M_\mathbf{P}/(K \cap M_\mathbf{P})$. Let us denote
by $\tau :  M_\mathbf{P} \ra X_\mathbf{P}$ the canonical
projection. Fix a base point $x_0 \in X$ whose stabilizer group is
$K$. Then we have an analytic diffeomorphism
$$\mu : N_{\mathbf{P}} \times A_\mathbf{P} \times X_\mathbf{P} \ra X, \ (n, a, \tau(m)) \ra nam \cdot x_0,$$
which is called the \emph{rational horocyclic decomposition} of $X$.
For more detail, see \cite[Section III.2]{BJ}.

\subsection{Precise reduction theory}
Let $\mathfrak{g}$ and $\mathfrak{a}_\mathbf{P}$ denote the Lie
algebras of the Lie groups $G$ and $A_\mathbf{P}$ defined above.
Then the adjoint action of $\mathfrak{a}_\mathbf{P}$ on
$\mathfrak{g}$ gives a root space decomposition: $$
\mathfrak{g}=\mathfrak{g}_0 + \sum_{\alpha \in
\Phi(\mathfrak{g},\mathfrak{a}_\mathbf{P})} \mathfrak{g}_\alpha,$$
where $$\mathfrak{g}_\alpha = \{Z \in \mathfrak{g} \ | \
ad(A)(Z)=\alpha(A)Z \text{ for all }A\in \mathfrak{a}_\mathbf{P}
\},$$ and $\Phi(\mathfrak{g},\mathfrak{a}_\mathbf{P})$ consists of
those nontrivial characters $\alpha$ such that
$\mathfrak{g}_\alpha \neq 0$. It is known that
$\Phi(\mathfrak{g},\mathfrak{a}_\mathbf{P})$ is a root system. Fix
an order on $\Phi(\mathfrak{g},\mathfrak{a}_\mathbf{P})$ and
denote by $\Phi^+(\mathfrak{g},\mathfrak{a}_\mathbf{P})$ the
corresponding set of positive roots. Define $$\rho_\mathbf{P} =
\sum_{\alpha \in \Phi^+(\mathfrak{g},\mathfrak{a}_\mathbf{P})}
(\dim \mathfrak{g}_\alpha) \alpha.$$ Let
$\Phi^{++}(\mathfrak{g},\mathfrak{a}_\mathbf{P})$ be the set of
simple positive roots.

Since we consider only $\bq$-rank $1$ arithmetic lattices, we
restrict ourselves from now on to the case
$\bq$-rank$(\mathbf{G})=1$. Then the followings hold:
\begin{itemize}
\item[(1)] All proper rational parabolic subgroups of $\mathbf{G}$ are minimal.
\item[(2)] For any proper rational parabolic subgroup $\mathbf{P}$ of $\mathbf{G}$, $\dim A_\mathbf{P}=1$.
\item[(3)] The set $\Phi^{++}(\mathfrak{g},\mathfrak{a}_\mathbf{P})$ of simple positive $\bq$-roots contains only a single element.
\end{itemize}

For any proper rational parabolic subgroup $\mathbf{P}$ of
$\mathbf{G}$ and any $t>1$, define $$A_{\mathbf{P},t}=\{ a\in
A_\mathbf{P} \ | \ \alpha(a)>t \},$$ where $\alpha$ is the unique
root in $\Phi^{++}(\mathfrak{g},\mathfrak{a}_\mathbf{P})$. For
bounded sets $U\subset N_\mathbf{P}$ and $V\subset X_\mathbf{P}$,
the set $$\mathcal{S}_{\mathbf{P},U,V,t} = U \times
A_{\mathbf{P},t} \times V \subset N_{\mathbf{P}} \times
A_\mathbf{P} \times X_\mathbf{P}$$ is identified with the subset
$\mu(U \times A_{\mathbf{P},t} \times V)$ of $X=G/K$ by the
horospherical decomposition of $X$ and called a \emph{Siegel set}
in $X$ associated with the rational parabolic subgroup
$\mathbf{P}$. Given a $\bq$-rank $1$ lattice $\Gamma$ in $G$, it
is a well-known result due to A. Borel and Harish-Chandra that
there are only finitely many $\Gamma$-conjugacy classes of
rational parabolic subgroups. Recall the precise reduction theory
in $\bq$-rank $1$ case as follows (see \cite[Proposition III.2.21]{BJ}).

\begin{thm}\label{thm:reduction theory}
Let $\Gamma$ be a $\bq$-rank $1$ lattice in $G$. Let $\mathbf{G}$
denote a semisimple algebraic group defined over $\bq$ with
$\bq$-rank$(\mathbf{G})=1$ as in Definition \ref{def:arithmetic}.
Denote by $\mathbf{P}_1,\ldots,\mathbf{P}_s$ representatives of
the $\Gamma$-conjugacy classes of all proper rational parabolic
subgroups of $\mathbf{G}$. Then there exist a bounded set
$\Omega_0$ in $\Gamma\backslash G/K$ and Siegel sets $U_i \times
A_{\mathbf{P}_i,t_i} \times c_i$, $i=1,\ldots,s$, in $X=G/K$ such
that
\begin{itemize}
\item[(1)] each Siegel set $U_i \times A_{\mathbf{P}_i,t_i}
    \times V_i$ is mapped injectively into $\Gamma \backslash
    X$ under the projection $\pi : X \ra \Gamma\backslash X$,
\item[(2)] the image of $U_i \times V_i$ in $(\Gamma \cap P_i)
    \backslash N_{\mathbf{P}_i} \times X_{\mathbf{P}_i}$ is
    compact,
\item[(3)] $\Gamma \backslash X$ admits the following disjoint
    decomposition $$\Gamma \backslash X = \Omega_0 \cup
    \coprod_{i=1}^s \pi(U_i \times A_{\mathbf{P}_i,t_i} \times
    V_i).$$
\end{itemize}
\end{thm}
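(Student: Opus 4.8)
The plan is to extract this refined, disjoint decomposition from the coarse reduction theory of \cite[Section III.2]{BJ}, exploiting the two special features of the $\bq$-rank one situation isolated above: every proper rational parabolic is minimal with one-dimensional split component $A_\mathbf{P}$, and the Levi factors $M_{\mathbf{P}_i}$ are $\bq$-anisotropic, so that the boundary symmetric spaces $X_{\mathbf{P}_i}$ carry cocompact arithmetic actions. The Borel--Harish-Chandra finiteness theorem already quoted furnishes the classes $\mathbf{P}_1,\dots,\mathbf{P}_s$, and the coarse reduction theorem gives relatively compact $U_i\subset N_{\mathbf{P}_i}$, $V_i\subset X_{\mathbf{P}_i}$ and numbers $t_i>1$ with $\Gamma\backslash X=\bigcup_{i=1}^s\pi(U_i\times A_{\mathbf{P}_i,t_i}\times V_i)$. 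First I would refine the choice of $U_i,V_i$ to be relatively compact Borel fundamental domains for the action of $\Gamma\cap P_i$ on $N_{\mathbf{P}_i}\times X_{\mathbf{P}_i}$ (which has compact quotient, since $\Gamma\cap N_{\mathbf{P}_i}$ is a cocompact lattice in the nilpotent group $N_{\mathbf{P}_i}$ and the image of $\Gamma\cap P_i$ in $M_{\mathbf{P}_i}$ is a lattice, automatically cocompact by $\bq$-anisotropy). This gives condition (2) at once and makes $\pi$ injective on each horizontal slice $U_i\times\{a\}\times V_i$.

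The heart of the matter is the separation step. Here I would invoke the Siegel finiteness property \cite[Section III.2]{BJ}: for two rational parabolics $\mathbf{P},\mathbf{Q}$ and attached Siegel sets, the set of $\gamma\in\Gamma$ for which $\gamma\mathcal{S}_\mathbf{P}$ meets $\mathcal{S}_\mathbf{Q}$ with large $\alpha$-coordinate in both is empty unless $\gamma\mathbf{P}\gamma^{-1}=\mathbf{Q}$. This is exactly where $\bq$-rank one is indispensable: since $\dim A_\mathbf{P}=1$, the single simple root $\alpha$ is a genuine one-dimensional depth parameter and the regions $\{\alpha>t\}$ shrink to the boundary component as $t\to\infty$ instead of opening into a higher-dimensional cone, so ``deep'' Siegel sets attached to non-$\Gamma$-conjugate parabolics truly separate. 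Using finiteness of $s$, I would then fix one $T>1$, no smaller than any $t_i$, such that for all $i\neq j$ no $\Gamma$-translate of $U_i\times A_{\mathbf{P}_i,T}\times V_i$ meets $U_j\times A_{\mathbf{P}_j,T}\times V_j$, and such that any $\gamma\in\Gamma$ mapping $U_i\times A_{\mathbf{P}_i,T}\times V_i$ into itself lies in $\Gamma\cap P_i$ (which in $\bq$-rank one acts trivially on the $A_{\mathbf{P}_i}$-factor). Together with the previous paragraph this gives (1) and the pairwise disjointness of the cuspidal pieces $\pi(U_i\times A_{\mathbf{P}_i,T}\times V_i)$.

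Finally I would cut out the compact core. Since $\alpha$ is a coordinate on the one-dimensional group $A_{\mathbf{P}_i}$, the set $A_{\mathbf{P}_i,[t_i,T]}:=\{a\in A_{\mathbf{P}_i}\mid t_i<\alpha(a)\le T\}$ is relatively compact, and the coarse covering splits as
$$\Gamma\backslash X=\Big(\bigcup_{i=1}^s\pi\big(U_i\times A_{\mathbf{P}_i,[t_i,T]}\times V_i\big)\Big)\;\cup\;\coprod_{i=1}^s\pi\big(U_i\times A_{\mathbf{P}_i,T}\times V_i\big).$$
The first union is a finite union of continuous images of relatively compact sets, hence has compact closure; the second is the disjoint union of the cuspidal pieces produced above. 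Taking $\Omega_0$ to be the complement in $\Gamma\backslash X$ of the open set $\coprod_i\pi(U_i\times A_{\mathbf{P}_i,T}\times V_i)$ gives a closed subset of that relatively compact first union, hence a compact set, and replacing each $t_i$ by $T$ produces exactly the decomposition in (3), with (1) and (2) still valid.

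The genuine obstacle is the separation step of the second paragraph, i.e.\ passing from a covering by infinitely many overlapping Siegel translates to a description with pairwise disjoint, embedded cuspidal ends. In higher $\bq$-rank this fails: Siegel sets attached to distinct parabolic classes intersect however far they are truncated, and the Levi factors can carry positive $\bq$-rank so that the boundary pieces are themselves noncompact; this is precisely why the statement is confined to $\bq$-rank one. Granting the Siegel finiteness property of \cite[Section III.2]{BJ} and the one-dimensionality of $A_\mathbf{P}$, the remaining ingredients (compactness of $\Omega_0$ and condition (2)) are routine.
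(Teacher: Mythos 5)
The paper does not prove this statement at all: it is quoted verbatim from Borel--Ji (\cite{BJ}, Proposition III.2.21), so there is no internal proof to compare against. Your sketch is a sound reconstruction of the standard argument in that reference --- Borel--Harish-Chandra finiteness, coarse reduction by Siegel sets, the Siegel separation property to make the deep cusp neighborhoods precisely invariant and pairwise disjoint, cocompactness of $\Gamma\cap P_i$ on $N_{\mathbf{P}_i}\times X_{\mathbf{P}_i}$ coming from $\bq$-anisotropy of the Levi modulo its split torus, and compactness of the complement --- and it correctly identifies the separation step and the triviality of the $A_{\mathbf{P}_i}$-component of elements of $\Gamma\cap P_i$ as the points where $\bq$-rank one is used.
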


Geometrically $B_\mathbf{P}(t)=\mu(N_\mathbf{P} \times
A_{\mathbf{P},t} \times X_\mathbf{P})$ is a horoball for any
proper minimal rational parabolic subgroup $\mathbf{P}$ of
$\mathbf{G}$. Hence each $\mu(U_i \times A_{\mathbf{P}_i,t_i}
\times V_i)$ is a fundamental domain of the cusp group $\Gamma_i =
\Gamma \cap \mathbf{P}(\br)$ acting on the horoball
$B_{\mathbf{P}_i}(t_i)$ and each $\mu(U_i \times V_i)$ is a
bounded domain in the horosphere that bounds the horoball
$B_{\mathbf{P}_i}(t_i)$. Furthermore, each set $\pi(U_i \times
A_{\mathbf{P}_i,t_i} \times V_i)$ corresponds to a cusp of the
locally symmetric space $\Gamma \backslash X$. We refer the reader
to \cite{BJ} for more details.

\subsection{Rational horocyclic coordinates}
Let $\mathbf{P}$ be a proper minimal rational parabolic subgroup
of $\mathbf{G}$ with $\bq$-rank$(\mathbf{G})=1$. The pullback
$\mu^*g$ of the metric $g$ on $X$ to $N_{\mathbf{P}} \times
A_\mathbf{P} \times X_\mathbf{P}$ is given by
$$ds^2_{(n,a,\tau(m))}= \sum_{\alpha \in
\Phi^+(\mathfrak{g},\mathfrak{a}_\mathbf{P})} e^{-2\alpha(\log a)}
h_\alpha \oplus da^2 \oplus d(\tau(m))^2,$$ where
$h_\alpha$ is some metric on $\mathfrak{g}_\alpha$ that smoothly
depends on $\tau(m)$ but is independent of $a$. Choosing
orthonormal bases $\{ N_1,\ldots,N_r\}$ of
$\mathfrak{n}_\mathbf{P}$, $\{Z_1,\ldots,Z_l\}$ of some tangent
space $T_{\tau(m)}X_\mathbf{P}$ and $A \in
\mathfrak{a}_\mathbf{P}$ with $\| A\|=1$, one can obtain
\emph{rational horocyclic coordinates} $ \eta : N_{\mathbf{P}}
\times A_\mathbf{P} \times X_\mathbf{P} \ra \mathbb{R}^r \times
\mathbb{R} \times \mathbb{R}^l$ defined by $$\eta\left(\exp
\left(\sum_{i=1}^r x_iN_i\right), \exp (yA), \exp
\left(\sum_{i=1}^lz_iZ_i\right)\right)=(x_1,\ldots,x_r,y,z_1,\ldots,z_l).$$
We abbreviate $(x_1,\ldots,x_r,y,z_1,\ldots,z_l)$ as $(x,y,z)$.
Then the $G$-invariant Riemannian volume form $dvol_X$ on $X\cong
N_{\mathbf{P}} \times A_\mathbf{P} \times X_\mathbf{P}$ with
respect to the rational horocyclic coordinates is given by
$$dvol_X=h(x,z) \exp^{-2\|\rho_\mathbf{P} \| y} dx dy dz,$$ where
$h(x,z)$ is a smooth function that is independent of $y$. See
\cite[Corollary 4.4]{Bo3}.

Note that all proper rational minimal parabolic subgroups are
conjugate under $\mathbf{G}(\bq)$. Hence the respective root
systems are canonically isomorphic \cite{Bo2} and moreover, one
can conclude $\|\rho_\mathbf{P}\|=\|\rho_{\mathbf{P}'}\|$ for any
two proper minimal rational parabolic subgroups
$\mathbf{P},\mathbf{P}'$ of $\bq$-rank $1$ algebraic group
$\mathbf{G}$.

\section{Straight simplices}\label{sec:StraightSimplex}
Let $X$ be a simply connected complete Riemannian manifold with
nonpositive sectional curvature and $\partial_\infty X$ be the
ideal boundary of $X$. For $x_0,\ldots,x_k \in X$, the straight
simplex $str(x_0,\ldots,x_k)$ is defined inductively as follows:
First, $str(x_0)$ is the point $x_0 \in X$, and $str(x_0,x_1)$ is
the unique geodesic arc from $x_1$ to $x_0$. In general,
$str(x_0,\ldots,x_k)$ is the geodesic cone on
$str(x_0,\ldots,x_{k-1})$ with the top point $x_k$. Since there is
the unique geodesic connecting two points in $X$, each ordered
$(k+1)$-tuple $(x_0,\ldots,x_k)$ determines the unique straight
simplex.

If the sectional curvature of $X$ is strictly negative, one can
define the notion of straight simplex in $X\cup \partial_\infty
X$. For any ordered tuple $(u_0,\ldots,u_k) \in X \cup
\partial_\infty X$, the straight simplex $str(u_0,\ldots,u_k)$ is
well defined as above. A straight simplex $str(u_0,\ldots,u_k)$ is
called an \emph{ideal straight simplex} if at least one of
$u_0,\ldots,u_k$ is in $\partial_\infty X$. In general, however,
an ideal straight simplex is not well defined for a simply
connected Riemannian manifold with nonpositive sectional
curvature. For example, let $X$ be a higher rank symmetric space
and consider two points $\theta_1, \theta_2$ in $\partial_\infty
X$ which cannot be connected by any geodesic in $X$. Then, one
cannot define a straight simplex $str(\theta_1,\theta_2)$ with
ideal vertices $\theta_1, \theta_2$. However, in the particular case that
$x_0,\ldots,x_{k-1} \in X$ and $\theta \in \partial_\infty X$, we
can define an ideal straight simplex
$str(x_0,\ldots,x_{k-1},\theta)$ as usual. This is because there
is the unique geodesic from a point in $X$ to a point in
$\partial_\infty X$. Hence, the geodesic cone on
$str(x_0,\ldots,x_{k-1})$ with the top point $\theta$ is well
defined. We only need such kind of ideal straight simplex to construct our invariant in K-theory for a $\bq$-rank $1$ locally symmetric
space.

\begin{setup}\label{setup}
We will stick to the following notations from now on. Let $G$ be a
noncompact, semisimple Lie group with trivial center and no
compact factors and $X=G/K$ be the associated symmetric space with
a maximal compact subgroup $K$ of $G$. Given a $\bq$-rank $1$
arithmetic lattice $\Gamma$ in $G$, we denote by $\mathbf G$ a
$\bq$-rank $1$ semisimple algebraic group defined over $\bq$ as in
Definition \ref{def:arithmetic}. Let $\mathbf P_1,\ldots,\mathbf
P_s$ be the representatives of the $\Gamma$-conjugacy classes of
all proper rational parabolic subgroups of $\mathbf G$. According
to the precise reduction theory, we fix a fundamental domain $F
\subset X$ as in Theorem \ref{thm:reduction theory} as follows:
$$F=\Omega_0 \cup \coprod_{i=1}^s U_i \times A_{\mathbf{P}_i,t_i} \times V_i$$
Each half-geodesic $A_{\mathbf{P}_i,t_i}$ uniquely determines a
point in $\partial_\infty X$, denoted by $c_i$. Write
$\Gamma_i=\Gamma \cap \mathbf P_i(\br)$ and $N=\Gamma \backslash
X$. Note that each $\Gamma_i$ is the stabilizer of $c_i$ in
$\Gamma$. Since $N$ is tame, $N$ is homeomorphic to the interior
of a compact manifold $M$ with boundary. Let
$\partial_1 M, \ldots,\partial_s M$ be the connected components of
the boundary $\partial M$ of $M$. Then there is a one-to-one
correspondence between $\Gamma_1,\ldots,\Gamma_s$ and $\partial_1
M, \ldots,\partial_s M$. Indeed, we can assume that each
$\partial_iM$ is homeomorphic to the quotient space of a
horosphere based at $c_i$ by the action of $\Gamma_i$.
\end{setup}

\begin{lemma}\label{lem:IdealStraight}
For any $c \in \{c_1,\ldots,c_s\}$, the volume of the ideal
straight simplex $str(x_0,\ldots,x_{d-1},c)$ is finite for any
$x_0,\ldots,x_{d-1} \in X$.
\end{lemma}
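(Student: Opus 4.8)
The plan is to reduce the computation to an explicit integral in the rational horocyclic coordinates of Section 3.3, using the fact that the ideal vertex $c=c_i$ is the point at infinity determined by the half-geodesic $A_{\mathbf{P}_i,t_i}$. First I would fix the proper minimal rational parabolic subgroup $\mathbf{P}=\mathbf{P}_i$ whose associated half-geodesic points to $c$, and work in the coordinates $\eta:N_{\mathbf{P}}\times A_{\mathbf{P}}\times X_{\mathbf{P}}\to\br^r\times\br\times\br^l$, $(x,y,z)$, in which $dvol_X=h(x,z)\exp(-2\|\rho_{\mathbf{P}}\|y)\,dx\,dy\,dz$ with $h$ independent of $y$. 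The key geometric observation is that the geodesic ray from any point $p\in X$ to $c$ is, in these coordinates, asymptotic to the $A_{\mathbf{P}}$-direction, i.e.\ $y\to+\infty$ along it; so the ideal straight simplex $str(x_0,\ldots,x_{d-1},c)$, being the geodesic cone with apex $c$ over the (compact, hence bounded) straight simplex $\sigma:=str(x_0,\ldots,x_{d-1})$, is swept out by these rays and escapes to infinity precisely by letting $y\to\infty$.

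Next I would set up the volume integral. Parametrize the cone by $(\xi,t)$ where $\xi$ ranges over $\sigma$ (a compact subset of $X$) and $t\in[0,\infty)$ is arclength along the geodesic ray from $\sigma$ toward $c$; the point is to bound the Jacobian of this parametrization. Using the product form of the pullback metric $ds^2=\sum_{\alpha}e^{-2\alpha(\log a)}h_\alpha\oplus da^2\oplus d(\tau(m))^2$, the $x$- and $z$-coordinates of points on the cone stay within a bounded region (they are controlled by the bounded simplex $\sigma$ and contract toward $c$), while the $y$-coordinate grows linearly in $t$. Hence on the portion of the cone with $y\ge y_0$ the volume element is dominated by $C\exp(-2\|\rho_{\mathbf{P}}\|y)\,dx\,dy\,dz$ integrated over a region that is bounded in $x$ and $z$ and unbounded only in $y$; since $\|\rho_{\mathbf{P}}\|>0$ (it is a positive sum of positive roots, and the symmetric space is of noncompact type), the integral $\int_{y_0}^\infty e^{-2\|\rho_{\mathbf{P}}\|y}\,dy<\infty$ converges. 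Combined with the fact that the part of the cone with $y\le y_0$ is relatively compact and therefore has finite volume, this gives finiteness.

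The main obstacle I anticipate is the Jacobian estimate: one must check carefully that, along the cone toward $c$, the $x$- and $z$-coordinates really do stay in a bounded set and that the measure-theoretic change of variables from the cone parametrization $(\xi,t)$ to the horocyclic coordinates $(x,y,z)$ does not reintroduce a factor growing fast enough to defeat the exponential decay $e^{-2\|\rho_{\mathbf{P}}\|y}$. A clean way to handle this is to dominate the straight simplex by a geodesic cone over a \emph{bounded metric ball} $B\subset X$ with apex $c$ (monotonicity of volume under inclusion of the base), and to note that such a cone is contained in a set of the form $\{(x,z)\in K_0\}\times\{y\ge y_1\}$ for a compact $K_0$ and some $y_1\in\br$ depending on $B$ — this follows because the rays from $B$ to $c$ are eventually trapped in a Siegel-type neighborhood of $c$, and on a horosphere $\{y=\text{const}\}$ the induced metric only shrinks the $N_{\mathbf{P}}$-directions as $y$ increases while $X_{\mathbf{P}}$ is unchanged. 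Once the cone is trapped in such a product region, finiteness of the volume is immediate from $\int_{K_0}h(x,z)\,dx\,dz\cdot\int_{y_1}^\infty e^{-2\|\rho_{\mathbf{P}}\|y}\,dy<\infty$.
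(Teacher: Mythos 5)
Your proposal is correct and follows essentially the same route as the paper: work in the rational horocyclic coordinates of the parabolic $\mathbf{P}$ associated to $c$, use that the geodesic rays ending at $c$ are exactly the vertical coordinate lines ($(x,z)$ constant, $y\nearrow\infty$), so the cone is parametrized by $\psi(s,t)=(x(s),y(s)+t,z(s))$, and integrate the density $h(x,z)e^{-2\|\rho_\mathbf{P}\|y}$. The paper settles the Jacobian issue you flag via a Gram-determinant (Hadamard-type) inequality using $\|\partial/\partial y\|=1$, which gives the quantitative bound $\mathrm{Vol}(str(x_0,\ldots,x_{d-1},c))\le \mathrm{Vol}(str(x_0,\ldots,x_{d-1}))\cdot\frac{1}{2\|\rho_\mathbf{P}\|}$; your alternative of trapping the cone in a finite-volume Siegel-type product region yields finiteness just as well.
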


\begin{proof}
Let $\mathbf{P}$ be the proper minimal rational parabolic subgroup
associated with $c$. Let $\varphi : \Delta^{d-1} \ra X$ be a
parametrization of $str(x_0,\ldots,x_{d-1})$. Choose a coordinate
system $s=(s_1,\ldots,s_{d-1})$ in $\Delta^{d-1}$. In the rational
horocyclic coordinates of $X=N_{\mathbf{P}} \times A_\mathbf{P}
\times X_\mathbf{P} \cong \mathbb{R}^r \times \mathbb{R} \times
\mathbb{R}^l$, we can write $\varphi(s)=(x(s),y(s),z(s))$. Define
a map $\psi : \Delta^{d-1} \times [0,\infty) \ra X$ by $$
\psi(s,t)=(x(s), y(s)+t,z(s)).$$ A line $x(s) \times \mathbb{R}^+
\times z(s)$ is a geodesic representing $c$ for any $s \in
\Delta^{d-1}$. Hence, it is easy to see that $\psi$ is a
parametrization of $str(x_0,\ldots,x_{d-1},c)$.

Denote by $G(w_1,\ldots,w_k)$ the Gram determinant of
$w_1,\ldots,w_k \in \br^d$. It is a standard fact that
$\sqrt{G(w_1,\ldots,w_k)}$ is the $k$-dimensional volume of the
parallelogram with edges $w_1,\ldots,w_k$. We abbreviate
$\frac{\partial \psi}{\partial s_1},\ldots,\frac{\partial
\psi}{\partial s_{d-1}}$ as $\frac{\partial \psi}{\partial s}$.
Then {\setlength\arraycolsep{2pt}
\begin{eqnarray*}
\lefteqn{\psi^* dvol_X (s,t)} \\
&=& h(x(s),z(s)) e^{-2\|\rho_\mathbf{P}\|(y(s)+t)} \sqrt{G \left(\frac{\partial \psi}{\partial s}(s,t), \frac{\partial \psi}{\partial t}(s,t)\right)} \, ds_1 \cdots ds_{d-1} dt\\
&=& h(x(s),z(s)) e^{-2\|\rho_\mathbf{P}\|(y(s)+t)} \sqrt{G \left(\frac{\partial \varphi}{\partial s}(s), \frac{\partial}{\partial y}(\psi(s,t))\right)} \, ds_1 \cdots ds_{d-1} dt\\
&\leq & h(x(s),z(s)) e^{-2\|\rho_\mathbf{P}\|(y(s)+t)} \sqrt{G\left(\frac{\partial \varphi}{\partial s}(s)\right)} \, ds_1 \cdots ds_{d-1} \, dt
\end{eqnarray*}}
The last inequality follows from $\| \frac{\partial}{\partial
y}(\psi(s,t)) \|=1$. Hence, we have {\setlength\arraycolsep{2pt}
\begin{eqnarray*}
\lefteqn{\mathrm{Vol}(str(x_0,\ldots,x_{d-1},c)) } \\
&= & \int_{\Delta^{d-1} \times [0,\infty)}\psi^* dvol_X  \, ds_1 \cdots ds_{d-1} dt \\
& \leq & \int_{\Delta^{d-1}} h(x(s),z(s)) e^{-2\|\rho_\mathbf{P}\|y(s)} \sqrt{G\left(\frac{\partial \varphi}{\partial s}(s)\right)} \, ds_1 \cdots ds_{d-1} \cdot \int_0^\infty e^{-2\|\rho_\mathbf{P}\|t}\, dt \\
&=& \mathrm{Vol}(str(x_0,\ldots,x_{d-1})) \cdot \frac{1}{2\|\rho_\mathbf{P}\|} < \infty.
\end{eqnarray*}}
This completes the proof.
\end{proof}

\section{Cuspidal completion}\label{sec:CuspidalCompletion}

In this section, we will define the notion of disjoint cone for
$M$ and the cuspidal completion of the classifying space
$B\Gamma$, following \cite[Section 4.2.1]{Ku}.

As we mentioned before, one can identify each component
$\partial_i M$ with the quotient $\Gamma_i \backslash H_i$ where
$H_i$ is the horosphere that bounds a horoball
$B_i=N_{\mathbf{P}_i} \times A_{\mathbf{P}_i,t_i} \times
X_{\mathbf{P}_i}$. Note that such $B_i's$ are disjoint in
$\bq$-rank $1$ case. Hence we have a homeomorphism of tuples $$(M,
\partial_1 M, \ldots, \partial_s M)\ra \left( \Gamma \backslash
\left(X-\cup_{i=1}^s \Gamma B_i \right), \Gamma_1 \backslash
H_1,\ldots,\Gamma_s \backslash H_s \right).$$

\subsection{Disjoint cone of topological spaces} For a
topological space $Y$ and subspaces $A_1,\ldots,A_s$ one can
define a disjoint cone $$\mathrm{Dcone}(\cup_{i=1}^s A_i \ra Y)$$
by coning each $A_i$ to a point $c_i$. In other words,
$\mathrm{Dcone}(\cup_{i=1}^s A_i \ra Y)$ is the space obtained by
gluing $Y$ and $\cup_{i=1}^s \mathrm{Cone}(A_i)$ along
$\cup_{i=1}^s A_i$.
\begin{lemma}\label{relativehomology1}
Let $M$ be a compact, connected, smooth, oriented manifold with
boundary $\partial M$. Then there is an isomorphism
$$H_*\left(M,\partial M\right)\cong
H_*\left(\mathrm{Dcone}\left(\cup_{i=1}^s\partial_iM\rightarrow
M\right)\right)$$ in degrees $*\ge 2$. In particular,
$H_d\left(\mathrm{Dcone}\left(\cup_{i=1}^s\partial_iM\rightarrow
M\right),{\br}\right)\cong {\br}$ if $d=dim\left(M\right)\ge
2$.\end{lemma}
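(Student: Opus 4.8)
The statement compares the relative homology $H_*(M,\partial M)$ with the (absolute, reduced-shifted) homology of the space obtained by coning off each boundary component $\partial_i M$ separately. The natural tool is the long exact sequence of the pair together with the observation that coning a subspace kills its homology. Concretely, write $Y = \mathrm{Dcone}(\cup_{i=1}^s \partial_i M \to M)$ and let $C_i = \mathrm{Cone}(\partial_i M) \subset Y$ be the $i$-th cone, so that $Y = M \cup (\coprod_i C_i)$ glued along $\coprod_i \partial_i M$. Each $C_i$ is contractible, and $\coprod_i C_i$ deformation retracts onto the set of cone points. The first step is to apply excision / the Mayer--Vietoris sequence (or directly the long exact sequence of the pair $(Y, \coprod_i C_i)$) to identify $H_*(Y)$ in degrees $\ge 1$ with $H_*(Y, \coprod_i C_i)$, and then excise the open cones to get $H_*(Y, \coprod_i C_i) \cong H_*(M, \partial M)$. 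Since $\coprod_i C_i$ is homotopy equivalent to a finite set of points, $H_*(\coprod_i C_i) = 0$ for $*\ge 1$ and equals $\bz^s$ in degree $0$; feeding this into the long exact sequence of $(Y,\coprod_i C_i)$ shows $H_*(Y)\cong H_*(Y,\coprod_i C_i)$ for $*\ge 2$, which combined with excision gives the claimed isomorphism in degrees $*\ge 2$. (In degree $1$ there is a possible correction term coming from the fact that $Y$ is connected while $\coprod_i C_i$ has $s$ components, which is exactly why the statement restricts to $*\ge 2$.)

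The second step is to verify the excision step carefully: one thickens each $\partial_i M$ to a collar neighborhood $\partial_i M \times [0,1)$ inside $M$ (using that $M$ is a smooth manifold with boundary, so collar neighborhoods exist), so that $\partial_i M$ is a deformation retract of an open neighborhood in $M$ and also the cone $C_i$ deformation retracts onto the cone point rel nothing; then excision applies to remove the open cones minus a collar, identifying $H_*(Y,\coprod_i C_i)$ with $H_*(M, \coprod_i \partial_i M) = H_*(M,\partial M)$. Alternatively one can run Mayer--Vietoris with the two open sets $U = $ (interior of $M$) $\cup$ (open collars) $\cup$ (open cones) contracted appropriately and $V = \coprod_i C_i$ slightly thickened; the intersection $U\cap V$ is homotopy equivalent to $\coprod_i \partial_i M$, and the Mayer--Vietoris sequence then reads off $H_*(Y)$ from $H_*(M)$, $H_*(\coprod_i C_i)=\bigoplus \bz$ (in degree $0$), and $H_*(\coprod_i \partial_i M)$, which after simplification is the long exact sequence of the pair $(M,\partial M)$ in degrees $\ge 2$.

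For the "in particular" clause: if $M$ is connected, oriented, and $d = \dim M \ge 2$, then by Lefschetz duality $H_d(M,\partial M;\br) \cong H^0(M;\br) \cong \br$ (the relative fundamental class), so the isomorphism just established gives $H_d(Y;\br) \cong \br$ as well.

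**Main obstacle.** The genuinely delicate point is the degree bookkeeping at the low end: one must be honest about why the isomorphism holds only for $*\ge 2$ and not for $*=1$ or $*=0$. The discrepancy comes entirely from the $\pi_0$ level --- $Y$ is connected whereas $\coprod_i C_i$ and $\coprod_i \partial_i M$ are generally disconnected --- and tracking the resulting $\bz^{s-1}$ or $\bz^s$ terms through the long exact sequence to confirm they only affect degrees $0$ and $1$ is the step that needs care. Everything else (existence of collars, contractibility of cones, excision) is routine manifold topology.
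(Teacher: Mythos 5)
Your proof is correct, but it takes a genuinely different route from the paper's. You work directly with singular homology: writing $Y=\mathrm{Dcone}(\cup_{i=1}^s\partial_iM\rightarrow M)$ and $C_i=\mathrm{Cone}(\partial_iM)$, the long exact sequence of the pair $(Y,\coprod_i C_i)$ kills the contractible cones in degrees $\ge 2$, and excision of the open cone tips (legitimate because collars make $(C_i,\partial_iM)$ a good pair) identifies $H_*(Y,\coprod_i C_i)$ with $H_*(M,\partial M)$; you correctly trace the restriction to $*\ge 2$ to the $\bz^s$ sitting in $H_0(\coprod_i C_i)$. The paper instead first replaces $(M,\partial M)$ by a CW pair via Morse theory, passes to the \emph{single} mapping cone $K_j$ of $j:\partial M\rightarrow M$, for which $H_*(K_j)\cong H_*(M/\partial M)\cong H_*(M,\partial M)$ in degrees $\ge 1$, and then observes that the projection $\mathrm{Dcone}(\cup_{i=1}^s\partial_iM\rightarrow M)\rightarrow K_j$ collapsing the $s$ cone points to one is an isomorphism of cellular chain groups in degrees $\ge 1$, hence an isomorphism on homology in degrees $\ge 2$. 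Your argument buys independence from any CW structure (only collars and good pairs are needed), while the paper's cellular comparison reduces the degree bookkeeping to a one-line chain-level observation. Your justification of the ``in particular'' clause via Lefschetz duality, $H_d(M,\partial M;\br)\cong H^0(M;\br)\cong\br$, is correct and is more explicit than what the paper records.
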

\begin{proof}  It is a well-known consequence of Morse theory that $\left(M,\partial M\right)$ is homotopy equivalent to a pair of CW-complexes. Since homology is preserved under homotopy equivalences, we can henceforth assume for the proof that $\left(M,\partial M\right)$ is a pair of CW-complexes. In particular \cite[VII.\ Corollary 1.4]{Bre93} the inclusion $j:\partial M\rightarrow M$ is a cofibration.

Let $K_j$ be the mapping cone of $j$ with vertex $c$. Then \cite[VII.\ Corollary 1.7]{Bre93} implies that we have isomorphisms$$H_*\left(K_j,v\right)\cong H_*\left(M/\partial M\right)\cong H_*\left(M,\partial M\right),$$ thus $H_*\left(K_j\right)\cong H_*\left(M,\partial M\right)$ for $*\ge 1$.

Let $c_i\in \mathrm{Dcone}\left(\cup_{i=1}^s\partial_iM\rightarrow M\right)$ be the vertex of $\mathrm{Cone}\left(\partial_iM \right)$ for each $i=1,\ldots,s$. The projection $\mathrm{Dcone}\left(\cup_{i=1}^s\partial_iM\rightarrow M\right)\rightarrow K_j$ is a cellular map which maps $c_1,\ldots,c_s$ to $c$ and is an isomorphism of cellular chain groups in degree $\ge 1$. Hence it induces an isomorphism of cellular homology in degree $*\ge 2$. It is well known that cellular and singular homology of a pair of CW-complexes agree, thus the claim follows.
\end{proof}
The argument actually provides an isomorphism $$H_*(\mathrm{Dcone}(\cup_{i=1}^s A_i \ra Y))= H_*(Y, \cup_{i=1}^s
A_i)$$ for $*\geq 2$ whenever $Y$ and $A_i$ are CW-complexes.

\subsection{Disjoint cone of simplicial sets} For a simplicial
set $(S,\partial_S)$ and a symbol $c$, \emph{the cone over $S$
with the cone point $c$} is the quasisimplicial set
$\mathrm{Cone}(S)$ whose $k$-simplicies are either $k$-simplices
in $S$ or cones over $(k-1)$-simplices in $S$ with the cone point
$c$. The boundary operator $\partial$ in $\mathrm{Cone}(S)$ is
defined by $\partial \sigma =\partial_S \sigma$ and $$\partial
\mathrm{Cone}(\sigma)=\mathrm{Cone}(\partial_S \sigma)+(-1)^{\dim
(\sigma)+1}\sigma$$ for $\sigma \in S$.

If $\{T_i \ | \ i\in I \}$ is a family of simplicial subsets of
$S$ indexed over a set $I$, then define the quasisimplicial set
$\mathrm{Dcone}(\cup_{i\in I}T_i \ra S)$ as the pushout
$$ \xymatrixcolsep{3pc}\xymatrix{
\dot\bigcup_{i\in I} T_i \ar[r] \ar[d] &
S \ar[d] \\
\dot\bigcup_{i\in I} \mathrm{Cone}(T_i) \ar[r] &
\mathrm{Dcone}\left(\dot\bigcup_{i\in I}T_i \ra S\right)
}$$

Recall that in Section 2.1 we defined the simplicial set $BG$ for a group $G$. Now let us consider $X=G/K$ a symmetric space of noncompact type and $\Gamma\subset G$ a lattice. We define the \emph{cuspidal completion $BG^{comp}$} of $BG$
to be $$\mathrm{Dcone}\left(\dot\bigcup_{c\in \partial_\infty X}
BG \ra BG\right).$$ In addition, define the \emph{cuspidal completion}
$B\Gamma^{comp}$ of $B\Gamma$ to be
$$\mathrm{Dcone}\left( \bigcup_{i=1}^s B\Gamma_i
\ra B\Gamma \right),$$ where $\Gamma_i$ are parabolic groups. More precisely, $B\Gamma^{comp}$ is the quasisimplicial set whose
$k$-simplices $\sigma$ are either of the form
$$\sigma=(\gamma_1,\ldots,\gamma_k)$$ with $\gamma_1,\ldots,\gamma_k
\in \Gamma$ or for some $i\in \{1,\ldots,s\}$ of the form
$$\sigma=(p_1,\ldots,p_{k-1},c_i)$$ with $p_1,\ldots,p_{k-1} \in
\Gamma_i$.
\begin{lemma}\label{relativehomology}
Let $M$ be a compact, connected, smooth, oriented manifold with boundary $\partial M$. Then
there is an isomorphism $$H_*\left(M,\partial M\right)\cong H_*^{simp}\left(\mathrm{Dcone}\left(\oplus_{i=1}^s C_*\left(\partial_iM\right)\rightarrow C_*\left(M\right)\right)\right)$$ in degrees $\ge 2$. In particular $H_d^{simp}\left(\mathrm{Dcone}\left(\oplus_{i=1}^sC_*\left(\partial_iM\right)\rightarrow C_*\left(M\right);{\br}\right)\right)\cong {\br}$ if $d=dim\left(M\right)\ge 2$. \end{lemma}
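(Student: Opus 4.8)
The plan is to deduce this chain-level statement from the topological one, Lemma~\ref{relativehomology1}, by identifying the chain complex $\mathrm{Dcone}(\oplus_{i=1}^sC_*(\partial_iM)\to C_*(M))$ with the simplicial chain complex of the topological disjoint cone $\mathrm{Dcone}(\cup_{i=1}^s\partial_iM\to M)$. First observe that the construction makes literal sense: since each $\partial_iM$ is a union of connected components of $\partial M$, every singular simplex with image in $\partial M$ lies in a unique $\partial_iM$, so $\bigoplus_{i=1}^sC_*(\partial_iM)=C_*(\partial M)$ sits as a subcomplex of $C_*(M)$, and the pushout defining $\mathrm{Dcone}$ at the chain level is exactly the one from Section~\ref{sec:CuspidalCompletion}.

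I would fix a smooth triangulation of $M$ in which $\partial M=\coprod_{i=1}^s\partial_iM$ is a subcomplex (this exists by the triangulability of compact smooth manifolds with boundary, and is unproblematic since the $\partial_iM$ are unions of components of $\partial M$); replacing singular by simplicial chains changes nothing on homology. With this triangulation the topological space $\mathrm{Dcone}(\cup_i\partial_iM\to M)$ is again a simplicial complex, obtained from $M$ by attaching, for each $i$, the simplicial cone $\mathrm{Cone}(\partial_iM)$ with cone vertex $c_i$ along $\partial_iM$. A $k$-simplex of $\mathrm{Cone}(\partial_iM)$ is either a $k$-simplex of $\partial_iM$ or the cone $\mathrm{Cone}(\sigma)$ over a $(k-1)$-simplex $\sigma$ of $\partial_iM$, and with $c_i$ placed last the simplicial boundary of $\mathrm{Cone}(\sigma)$ equals $\mathrm{Cone}(\partial\sigma)+(-1)^{\dim\sigma+1}\sigma$ --- precisely the boundary operator put on $\mathrm{Dcone}$ in Section~\ref{sec:CuspidalCompletion}. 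Hence the two chain complexes agree in every degree $\ge1$; in degrees $\le1$ they differ only in whether the vertex $c_i$ occurs in the boundary of the edge $\mathrm{Cone}(p)$ over a vertex $p$, which is exactly the low-degree discrepancy responsible for the restriction $*\ge2$. Feeding this identification into Lemma~\ref{relativehomology1} (and the remark following it, identifying $H_*(\mathrm{Dcone}(\cup_i\partial_iM\to M))$ with $H_*(M,\partial M)$ for $*\ge2$) gives the claimed isomorphism, and the ``in particular'' follows at once from the corresponding assertion in Lemma~\ref{relativehomology1}.

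An equivalent, purely algebraic argument --- which makes the degree restriction transparent and avoids triangulations --- is available: the pushout gives a short exact sequence of chain complexes $0\to C_*(M)\to\mathrm{Dcone}(\oplus_iC_*(\partial_iM)\to C_*(M))\to\bigoplus_{i=1}^s\mathrm{Cone}(C_*(\partial_iM))/C_*(\partial_iM)\to0$. Each algebraic cone $\mathrm{Cone}(C_*(\partial_iM))$ is chain contractible onto its cone vertex (a sign-adjusted version of $\sigma\mapsto\mathrm{Cone}(\sigma)$ is a contracting homotopy), hence acyclic in positive degrees, so the long exact sequence of $0\to C_*(\partial_iM)\to\mathrm{Cone}(C_*(\partial_iM))\to\mathrm{Cone}(C_*(\partial_iM))/C_*(\partial_iM)\to0$ yields $H_k(\mathrm{Cone}(C_*(\partial_iM))/C_*(\partial_iM))\cong H_{k-1}(\partial_iM)$ for $k\ge2$. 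A direct inspection of the connecting homomorphism of the first sequence shows that it is, up to sign, the map $H_{k-1}(\partial M)\to H_{k-1}(M)$ induced by the inclusion, so the resulting long exact sequence matches that of the pair $(M,\partial M)$, and a five-lemma comparison gives the isomorphism in degrees $\ge2$. In either approach the one genuinely delicate point is the low-degree bookkeeping --- matching the abstract cone differential with the geometric one and checking that the mismatch is confined to degrees $\le1$ --- and, in the algebraic version, arranging that the comparison produces an honest isomorphism rather than merely an agreement of subquotients; this is why routing through the explicit topological model of Lemma~\ref{relativehomology1} is the cleaner option.
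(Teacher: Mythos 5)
Your second, purely algebraic argument is correct and is genuinely different from the paper's proof. The paper instead passes through geometric realizations: it views $C_*(M)$ and $C_*(\partial_iM)$ as (singular) simplicial sets, compares $\lvert \mathrm{Dcone}(\oplus_i C_*(\partial_iM)\to C_*(M))\rvert$ with the topological space $\mathrm{Dcone}(\cup_i\partial_iM\to M)$ by a map of Mayer--Vietoris sequences (using contractibility of the cones and the weak equivalence $\lvert C_*(Y)\rvert\to Y$) and the five lemma, and then invokes Lemma~\ref{relativehomology1}. Your route via the short exact sequence $0\to C_*(M)\to\mathrm{Dcone}\to\oplus_i\mathrm{Cone}(C_*(\partial_iM))/C_*(\partial_iM)\to 0$, the contracting homotopy $s(\sigma)=(-1)^{\dim\sigma+1}\mathrm{Cone}(\sigma)$, the computation of the connecting map, and the five-lemma comparison with the long exact sequence of $(M,\partial M)$ bypasses both the topological model and Lemma~\ref{relativehomology1} entirely; it is more elementary and makes the restriction to degrees $\ge 2$ transparent. (In fact you can streamline it further: the assignment $\sigma\mapsto[\sigma]$, $\mathrm{Cone}(\sigma)\mapsto 0$ is a chain map $\mathrm{Dcone}\to C_*(M)/C_*(\partial M)$ whose kernel is $\oplus_i\mathrm{Cone}(C_*(\partial_iM))$, acyclic in positive degrees, so the isomorphism in degrees $\ge 2$ drops out of one long exact sequence with no ladder to check.)

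Your first route, however, has a gap you should not paper over. The opening claim --- that $\mathrm{Dcone}(\oplus_iC_*(\partial_iM)\to C_*(M))$ \emph{is} the simplicial chain complex of the topological disjoint cone --- is false when $C_*$ denotes singular chains, as it does in this lemma: the singular chain complex of $\mathrm{Dcone}(\cup_i\partial_iM\to M)$ contains many simplices that are neither simplices of $M$ nor literal cones over simplices of $\partial_iM$ (e.g.\ simplices meeting the cone region without being geometric cones), so the chain-level Dcone is only a proper subcomplex. Passing to a triangulation does not fix this, since the Dcone of the triangulation's chain complex is again a different complex from the Dcone of singular chains, and ``singular and simplicial chains have the same homology'' applies to $M$ and $\partial_iM$ but does not by itself transfer to the Dcones; bridging that gap requires exactly the Mayer--Vietoris/five-lemma comparison that the paper carries out. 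So the first route, as written, is not the cleaner option --- it is the one missing a step --- whereas your algebraic argument is complete.
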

\begin{proof}
%
For a simplicial set $S$, we denote by $| S|$ the geometric
realisation of $S$. One can think of $C_*(\partial_iM), C_*(M)$ as
simplicial sets. Note that we have a natural isomorphism between
the simplicial homology of the simplicial set and the singular
homology of its geometric realisation. Thus to derive
\hyperref[relativehomology]{Lemma \ref*{relativehomology}} from
\hyperref[relativehomology1]{Lemma \ref*{relativehomology1}} it is
sufficient to provide an isomorphism
$$H_*\left(\mathrm{Dcone}\left(\cup_{i=1}^s\partial_iM\rightarrow
M\right)\right)\cong H_*\left(|
\mathrm{Dcone}\left(\oplus_{i=1}^s
C_*\left(\partial_iM\right)\rightarrow
C_*\left(M\right)\right)|\right).$$

There is a natural Mayer-Vietoris sequence for CW-complexes (see
the remark after \cite[Prop.\ A.5]{Bre93}), hence the canonical
continuous map $$| \mathrm{Dcone}\left(\oplus_{i=1}^s
C_*\left(\partial_iM\right)\rightarrow
C_*\left(M\right)\right)|\rightarrow
\mathrm{Dcone}\left(\cup_{i=1}^s\partial_iM\rightarrow M\right)$$
yields the following commutative diagram:
$$ \begin{xy}
\xymatrix{
\vdots\ar[d]&\vdots\ar[d]\\
H_*\left(| C_*\left(M\right)|\right)\bigoplus\oplus_{i=1}^s H_*\left(| \mathrm{Cone}\left(C_*\left(\partial_i M\right)\right)|\right)\ar[r]\ar[d]&H_*\left(M\right)\bigoplus\oplus_{i=1}^s H_*\left(\mathrm{Cone}\left(\partial_iM\right)\right)\ar[d]\\
H_*\left(| \mathrm{Dcone}\left(\oplus_{i=1}^s C_*\left(\partial_iM\right)\rightarrow C_*\left(M\right)\right)|\right)\ar[r]\ar[d]&H_*\left(\mathrm{Dcone}\left(\cup_{i=1}^s\partial_iM\rightarrow M\right)\right)\ar[d]\\
\oplus_{i=1}^sH_{*-1}\left(| C_*\left(\partial_iM\right)|\right)\ar[d]\ar[r]&\oplus_{i=1}^sH_{*-1}\left(\partial_iM\right)
\ar[d]\\
\vdots&\vdots
}
\end{xy}$$
We note that $| \mathrm{Cone}\left(C_*\left(\partial_i
M\right)\right)|$ and $\mathrm{Cone}\left(\partial_iM\right)$ are
contractible, hence their homology vanishes in degrees $\ge1$.
Moreover $H_*\left(| C_*\left(M\right)|\right)\rightarrow
H_*\left(M\right)$ and $H_*\left(|
C_*\left(\partial_iM\right)|\right)\rightarrow
H_*\left(\partial_iM\right)$ are isomorphisms: this follows from
\cite[Theorem 2.27]{Hat02} together with the fact that
$H_*\left(X\right)$ is by definition the same as
$H_*^{simp}\left(C_*\left(X\right)\right)$ for any topological
space $X$.

Thus the five lemma implies the wanted isomorphism $$H_*\left(|
\mathrm{Dcone}\left(\oplus_{i=1}^s
C_*\left(\partial_iM\right)\rightarrow
C_*\left(M\right)\right)|\right)\rightarrow
H_*\left(\mathrm{Dcone}\left(\cup_{i=1}^s\partial_iM\rightarrow
M\right)\right).$$
\end{proof}

In the sequel we will consider the situation that two points
$x_i,x$ in
$X:=\mathrm{Dcone}\left(\oplus_{i=1}^sC_*\left(\partial_iM\right)\rightarrow
C_*(M)\right)$ are connected by a $1$-simplex $e_i$ with $\partial
e_i=x-x_i$. For a 1-simplex $\sigma$ with both vertices in $x_i$
we can define "conjugation with $e_i$" by
$C_{e_i}(\sigma):=\overline{e_i}*\sigma*e_i$. In particular, for
$x_i\in\partial_iM$ and if
$\pi_1(\partial_iM,x_i)\rightarrow\pi_1(M,x_i)$ is injective, then
$C_{e_i}$ realizes an isomorphism of $\pi_1(\partial_iM,x_i)$ to a
subgroup $\Gamma_i\subset\pi_1(M,x)$. For a $1$-simplex $\sigma$
with $\partial\sigma=c_i-x_i$ we define
$C_{e_i}(\sigma):=\overline{e_i}*\sigma$.

\begin{defi}\label{gammai}
Let $\left(M,\partial M\right)$ be a pair of topological spaces,
 $\partial_1M,\cdots,\partial_sM$ be the path components of
$\partial M$. Denote by $c_i\in
\mathrm{Dcone}\left(\cup_{i=1}^s\partial_i M\rightarrow M\right)$
the vertex of $\mathrm{Cone}\left(\partial_iM\right)$ for
$i=1,\ldots,s$. Let $x\in M, x_1\in\partial_1M,\ldots,
x_s\in\partial_sM$. For $i\in\left\{1,\ldots,s\right\}$ we define
$$\widehat{C}_*^{x_i}\left(\partial_i M\right)\subset \mathrm{Cone}\left(C_*\left(\partial_iM\right)\right)\subset C_*\left(\mathrm{Dcone}\left(\cup_{i=1}^s\partial_i M\rightarrow M\right)\right)$$
to be the subcomplex freely generated by those simplices in $Cone(\partial_i M)$ for which\\
- either all vertices are in $x_i$ ,\\
- or all but the last vertex is in $x_i$ and the last vertex is in $c_i$.

For $i=1,\ldots,s$ fix a path $e_i$ from $x$ to $x_i$ and the
corresponding $C_{e_i}$. Define
$$\widehat{C}_*^x\left(M\right)\subset C_*\left(\mathrm{Dcone}\left(\cup_{i=1}^s\partial_i M\rightarrow M\right)\right)$$
to be the subcomplex freely generated by those simplices $\sigma$ for which\\
- either all vertices are in $x$,\\
- or  for some $i\in\left\{1,\ldots,s\right\}$ there exists a simplex $\sigma^\prime\subset \widehat{C}_*^{x_i}\left(\partial_i M\right)$ such that $C_{e_i}$ maps the $1$-skeleton of $\sigma^\prime$ to the $1$-skeleton of $\sigma$ (up to homotopy fixing the $0$-skeleton).\end{defi}

We remark that in the last case the homotopy classes (rel.\
$\left\{0,1\right\}$) of all edges between all but the last
vertices belong to $\Gamma_i\subset \pi_1\left(M,x\right)$.

For the statement of the following lemma we will denote by
$$j_1:\widehat{C}_*^x\left(M\right)\rightarrow C_*\left(\mathrm{Dcone}\left(\cup_{i=1}^s\partial_i M\rightarrow M\right)\right)$$
and
$$j_2:\mathrm{Dcone}\left(\oplus_{i=1}^sC_*\left(\partial_iM\right)\rightarrow C_*\left(M\right)\right)\rightarrow C_*\left(\mathrm{Dcone}\left(\cup_{i=1}^s\partial_i M\rightarrow M\right)\right)$$
the inclusions.
\begin{lemma}\label{chainhomotopy}Let $M$ be a compact, connected, smooth,
oriented manifold with boundary $\partial M$. Let $x\in M$. Then
there exists a chain map $$F:
C_*(\mathrm{Dcone}\left(\oplus_{i=1}^sC_*\left(\partial_iM\right)\rightarrow
C_*\left(M\right)\right))\rightarrow
\widehat{C}_*^x\left(M\right)$$ such that $ j_1\circ F$ is chain
homotopic to $j_2$.
\end{lemma}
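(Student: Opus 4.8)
The plan is to build the chain map $F$ simplex by simplex, using the straightening/conjugation machinery already set up in Definition \ref{gammai}, and then to produce an explicit chain homotopy $j_1\circ F\simeq j_2$ by a prism (cone) construction. The source complex $\mathrm{Dcone}(\oplus_{i=1}^s C_*(\partial_i M)\to C_*(M))$ has three kinds of generators: a singular simplex $\sigma$ in $M$; a simplex $\sigma$ in some $\partial_i M$ regarded inside $C_*(M)$; and a cone $\mathrm{Cone}(\sigma)$ over a simplex $\sigma$ in $\partial_i M$, with the boundary formula $\partial\,\mathrm{Cone}(\sigma)=\mathrm{Cone}(\partial_S\sigma)+(-1)^{\dim\sigma+1}\sigma$. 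On a simplex $\sigma$ of $M$, I would set $F(\sigma)$ to be the unique singular simplex in $\widehat C^x_*(M)$ with all vertices at $x$ whose edges are the homotopy classes $[\sigma|_{\zeta_i}]\in\pi_1(M,x)$ — i.e.\ essentially the composite $\Psi\circ\Phi$ of Section 2.1, the classical retraction of singular chains onto the bar-type subcomplex. On a cone $\mathrm{Cone}(\sigma)$ with $\sigma$ in $\partial_i M$, I would use the path $e_i$ and the conjugation operator $C_{e_i}$ of Definition \ref{gammai}: the edges of $\sigma$ get conjugated into $\Gamma_i\subset\pi_1(M,x)$, the extra cone edge to $c_i$ gets sent to $\overline{e_i}*(\text{that edge})$, and $F(\mathrm{Cone}(\sigma))$ is the corresponding simplex of $\widehat C^x_*(M)$ of the second type (all but the last vertex at $x$, last vertex at $c_i$). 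A simplex $\sigma$ living in $\partial_i M\subset M$ should be sent to $C_{e_i}$ applied to its straightened version with vertices at $x$, so that $F$ is compatible with the face that identifies $\mathrm{Cone}(\partial_S\sigma)$'s bottom face with $\sigma\subset C_*(M)$.

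First I would check that $F$ is a chain map. This is a boundary-by-boundary verification on each of the three types of generators; the only non-formal point is that the face maps of a singular simplex commute with "record the edge homotopy classes", which is exactly why $\Phi$ is a chain map in Section 2.1, and that $C_{e_i}$ intertwines the two boundary conventions — note that for a $1$-simplex $\tau$ with $\partial\tau=c_i-x_i$ one has $C_{e_i}(\tau)=\overline{e_i}*\tau$ rather than a conjugate, and this is precisely matched to the $(-1)^{\dim\sigma+1}\sigma$ term in $\partial\,\mathrm{Cone}$. So the chain-map check reduces to bookkeeping already implicit in the earlier constructions.

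Next I would produce the chain homotopy $H$ with $\partial H+H\partial=j_1\circ F-j_2$. Inside $\mathrm{Dcone}(\cup_{i=1}^s\partial_i M\to M)$ both $j_1\circ F(\sigma)$ and $j_2(\sigma)$ are honest singular simplices with the same homotopy type rel vertices (their edges agree up to homotopy fixing endpoints, by the "rel $1$-skeleton" clause in Definition \ref{gammai}), and the ambient space is aspherical on each relevant piece ($M$ is a $K(\Gamma,1)$, $\partial_i M$ is a $K(\Gamma_i,1)$, and the cones are contractible); hence there is a canonical straight-line / geodesic homotopy between them, which I would assemble into the standard prism operator $H(\sigma)=\sum_j(-1)^j(\text{prism on the }j\text{th face})$. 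The asphericity guarantees the prism faces can be filled consistently and that the homotopy is well-defined up to the needed relations. The main obstacle is the last step: making the prism homotopy simultaneously compatible with \emph{all} the face identifications in the pushout defining $\mathrm{Dcone}$ — in particular along the bottom faces of cones, where a simplex of $\partial_i M$ is glued to its image in $C_*(M)$, and along the cone points $c_i$, one must check the homotopy is stationary (the vertex $c_i$ has no room to move) and that the conjugating paths $e_i$ chosen once and for all make the whole thing coherent. This coherence across the gluing, rather than any single geometric estimate, is where the real work lies; everything else is the familiar acyclic-models/bar-subcomplex retraction argument adapted to the quasisimplicial Dcone.
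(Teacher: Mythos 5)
Your overall shape is right (push all vertices to $x$, use the paths $e_i$ and the operators $C_{e_i}$ near the boundary components, and realize $F-\mathrm{id}$ as the boundary of a prism), but two steps as written do not go through under the hypotheses of the lemma. First, your definition of $F$ on a simplex $\sigma$ of $M$ as ``the unique singular simplex in $\widehat{C}^x_*(M)$ with all vertices at $x$ whose edges are the classes $[\sigma|_{\zeta_i}]$'' is not well defined: $\widehat{C}^x_*(M)$ is the \emph{free} complex on actual singular simplices with vertices at $x$ (Definition \ref{gammai}), not a quotient by homotopy, and there is no preferred such simplex unless one has a straightening (which requires nonpositive curvature, as in Section 2.1) or makes coherent choices. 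Second, your chain homotopy invokes asphericity of $M$ and of the $\partial_iM$ and a ``straight-line / geodesic homotopy,'' but the lemma assumes neither asphericity nor any metric structure on $\mathrm{Dcone}(\cup_i\partial_iM\to M)$; asphericity only enters later, in Corollary \ref{homologyofhatiso}, for the surjectivity of $F_*$.

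The missing idea is to construct $F_n$ and the homotopy $K_n$ \emph{simultaneously} by induction on $n$, using only the homotopy extension property of the cofibration $\partial\Delta^n\hookrightarrow\Delta^n$: given the already-built homotopies $L_{\partial_j\sigma}$ on the faces of $\sigma$, one extends to a map $L_\sigma:\Delta^{n}\times[0,1]\to\mathrm{Dcone}(\cup_i\partial_iM\to M)$ starting at $\sigma$, and then \emph{defines} $F_n(\sigma):=L_\sigma(\cdot,1)$ and $K_n(\sigma):=\sum_j L_\sigma\circ\kappa_{n,j}$ via the standard prism subdivision. This makes $\partial F=F\partial$ and $\partial K+K\partial=F-\mathrm{id}$ automatic, resolves the coherence issue you correctly flagged at the glued faces and cone points without needing any canonical choice, and requires only a separate inductive check that the resulting $F_n(\sigma)$ lands in $\widehat{C}^x_n(M)$ (in particular that the relevant edge classes lie in $\Gamma_i$ for cone simplices). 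That is the route the paper takes; without it, your $F$ is not defined and your homotopy is not constructed.
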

\begin{proof}
To write out the claim of the theorem: we want to show that there
exist sequences of chain maps
$$F_n:C_n\left(\mathrm{Dcone}\left(\oplus_{i=1}^sC_*\left(\partial_iM\right)\rightarrow
C_*\left(M\right)\right)\right)\rightarrow
\widehat{C}_n^x\left(M\right)$$ and of chain homotopies
$$K_n:C_n\left(\mathrm{Dcone}\left(\oplus_{i=1}^sC_*\left(\partial_iM\right)\rightarrow C_*\left(M\right)\right)\right)\rightarrow
C_{n+1}\left(\mathrm{Dcone}\left(\cup_{i=1}^s\partial_i M\rightarrow M\right)\right)$$
for $n=0,1,2,\ldots$ such that
$$\partial K_n\left(\sigma\right)+K_{n-1}\left(\partial\sigma\right)=F_n\left(\sigma\right)-\sigma$$
for all $\sigma\in C_n\left(\mathrm{Dcone}\left(\oplus_{i=1}^sC_*\left(\partial_iM\right)\rightarrow C_*\left(M\right)\right)\right)$.\\

We will use the procedure for dividing $\Delta^n$ into
$\left(n+1\right)$-simplices which is described in \cite[page
112]{Hat02}. So for each $n\in{\mathbb N}$ we let
$v_{n,0},\ldots,v_{n,n}$ and $w_{n,0},\ldots,w_{n,n}$ be the
vertices of $\Delta^n\times\left\{0\right\}$ and
$\Delta^n\times\left\{1\right\}$, respectively, and for $0\le j\le
n$, we denote by $\kappa_{n,j}:\Delta^{n+1}\rightarrow
\Delta^n\times\left[0,1\right]$ the affine
$\left(n+1\right)$-simplex with vertices
$v_0,\ldots,v_j,w_j,\ldots,w_n$. We will inductively prove a
slightly stronger statement as above, namely we will show that for
each $n$-simplex $\sigma$ in
$\mathrm{Dcone}\left(\oplus_{i=1}^sC_*\left(\partial_iM\right)\rightarrow
C_*\left(M\right)\right)$
one can define a continuous map $L_\sigma:\Delta^n\times\left[0,1\right]\rightarrow \mathrm{Dcone}\left(\cup_{i=1}^s\partial_i M\rightarrow M\right)$ such that $K_n\left(\sigma\right)$ is given by $K_n\left(\sigma\right)=\sum_{j=0}^n L_\sigma\circ \kappa_{n,j}$ (and of course that the so defined $K_n$ satisfies the above properties).\\

Let us first consider $n=0$. A $0$-simplex $\sigma$ in
$\mathrm{Dcone}\left(\oplus_{i=1}^sC_*\left(\partial_iM\right)\rightarrow
C_*\left(M\right)\right)$ is either a $0$-simplex in $M$ or a cone
point $c_i$.

If $\sigma=c_i$, then we define $F_0\left(c_i\right)=c_i$ and
$K_0\left(c_i\right)$ is the 1-simplex mapped constantly to $c_i$.

If the $0$-simplex $\sigma$ belongs to $M-\partial M$, then we
define $F_0\left(\sigma\right)=x$ and $K_0\left(\sigma\right)$ is
some (arbitrarily chosen) $1$-simplex in $M\subset
\mathrm{Dcone}\left(\cup_{i=1}^s\partial_iM\rightarrow M\right)$
with $\partial_0K_0\left(\sigma\right)=x$ and
$\partial_1K_0\left(\sigma\right)=\sigma$.

If the $0$-simplex $\sigma$ belongs to $\partial_iM$, then we first choose some 1-simplex $e_\sigma$ in $\partial_iM$ with $\partial_0e_\sigma=x_i, \partial_1e_\sigma=\sigma$. (If $\sigma=x_i$, we let $e_\sigma$ be the constant $1$-simplex.) Recall from Definition 2.3 that we have fixed a path $e_i$ from $x_i$ to $x$ which yields the isomorphism between $\pi_1\left(\partial_iM,x_i\right)$ and $\Gamma_i$ by conjugation. Define then $F_0\left(\sigma\right)=x$ and $K_0\left(\sigma\right)$ is the $1$-simplex obtained as concatenation of $e_\sigma$ and $e_i$. In particular $\partial_0K_0\left(\sigma\right)=x$ and $\partial_1K_0\left(\sigma\right)=\sigma$. \\

Let us now consider $n=1$. A $1$-simplex $\sigma$ in
$\mathrm{Dcone}\left(\oplus_{i=1}^sC_*\left(\partial_iM\right)\rightarrow
C_*\left(M\right)\right)$ is either a $1$-simplex in $M$ or the
cone (with cone point $c_i$) over a $0$-simplex in $\partial_iM$.
We have defined $K_0\left(\partial_1\sigma\right)$ and
$K_0\left(\partial_0\sigma\right)$. Inclusion
$\partial\Delta^1\rightarrow\Delta^1$ is a cofibration, hence we
have a continuous map
$L_\sigma:\Delta^1\times\left[0,1\right]\rightarrow
\mathrm{Dcone}\left(\cup_{i=1}^s\partial_i M\rightarrow M\right)$
such that $L_\sigma\left(x,0\right)=x$ for $x\in\sigma$ and
$L_\sigma\left(\partial_j\sigma,t\right)=K_0\left(\partial_j\sigma\right)\left(t\right)$
for $j=0,1$. Then define
$K_1\left(\sigma\right):=L_\sigma\circ\kappa_{1,0}+L_\sigma\circ\kappa_{1,1}$
and $F_1\left(\sigma\right)$ by
$F_1\left(\sigma\right)\left(x\right):=L_\sigma\left(x,1\right)$
for $x\in\Delta^1$.

It is clear that $\partial K_1\left(\sigma\right)+K_0\left(\partial\sigma\right)=F_1\left(\sigma\right)-\sigma$ and that $F_0\left(\partial\sigma\right)=\partial F_1\left(\sigma\right)$.

We still have to check that $F_1\left(\sigma\right)\in
\widehat{C}_1^x\left(M\right)$. If $\sigma$ is the cone over a
simplex in $\partial_iM$, then $\partial_0\sigma=c_i$ and
$\partial_1\sigma\in\partial_iM\subset M$, hence
$F_0\left(\partial_0\sigma\right)=c_i$ and
$F_0\left(\partial_1\sigma\right)=x$, thus $F_1(\sigma)\in
\widehat{C}_1^x(M)$.
If $\sigma\in C_1\left(M\right)$, then $\partial_jF_1\left(\sigma\right)=F_0\left(\partial_j\sigma\right)=x$ for $j=0,1$, thus $F_1\left(\sigma\right)\in \widehat{C}_1^x\left(M\right)$. Moreover (this will be needed in the next steps) if $\sigma\in C_1\left(\partial_iM\right)$, then the homotopy class (rel.\ $\left\{0,1\right\}$) of $F_1\left(\sigma\right)$ belongs to $\Gamma_i\subset\pi_1\left(M,x\right)$. Indeed, $F_1\left(\sigma\right)$ is in the homotopy class (rel.\ $\left\{0,1\right\}$) of $\overline{K_0\left(\partial_1\sigma\right)}*\sigma*K_0\left(\partial_0\sigma\right)=\overline{e_i}*\overline{e_{\partial_1\sigma}}*\sigma*e_{\partial_0\sigma}*e_i$, where the bar means the $1$-simplex with opposite orientation. Now $\overline{e_{\partial_1\sigma}}*\sigma*e_{\partial_0\sigma}$ represents an element in $\pi_1\left(\partial_iM,x_i\right)$ and by assumption conjugation with $e_i$ provides to isomorphism to $\Gamma_i$, hence $F_1\left(\sigma\right)$ represents an element in $\Gamma_i$. \\

We now proceed to prove the theorem by induction. Assume that
$F_k$ and $K_k$ have been defined for $k\le n$. We will assume as
part of the inductive hypothesis (and prove as part of the
induction claim) that $F_n\left(\sigma\right)$ has all vertices in
$x$ if $\sigma\in C_*\left(M\right)$ and that
$F_n\left(\sigma\right)$ has its last vertex in $c_i$ if $\sigma$
has its last vertex in $c_i$. (This is satisfied for $n\le 1$ by
the above construction.)

Let $\sigma:\Delta^{n+1}\rightarrow
\mathrm{Dcone}\left(\cup_{i=1}^s\partial_i M\rightarrow M\right)$
be an $\left(n+1\right)$-simplex in

$\mathrm{Dcone}\left(\oplus_{i=1}^sC_*\left(\partial_iM\right)\rightarrow
C_*\left(M\right)\right)$. By the inductive hypothesis we have for
$j=0,\ldots,n+1$ a continuous map $L_{\partial_j\sigma}:
\Delta^n\times\left[0,1\right]\rightarrow
\mathrm{Dcone}\left(\cup_{i=1}^s\partial_i M\rightarrow M\right)$
such that $K_n\left(\partial_j\sigma\right)$ is given by
$K_n\left(\partial_j\sigma\right)=\sum_{l=0}^n
L_{\partial_j\sigma}\circ \kappa_{n,l}$. (In particular
$L_{\partial_j\sigma}\left(x,0\right)=x$ for
$x\in\partial_j\Delta^n$.) Since the inclusion
$\partial\Delta^n\rightarrow\Delta^n$ is a cofibration by
\cite[VII. Corollary 1.4]{Bre93} we have a continuous map
$L_\sigma:\Delta^{n+1}\times\left[0,1\right]\rightarrow
\mathrm{Dcone}\left(\cup_{i=1}^s\partial_i M\rightarrow M\right)$
such that $L_\sigma|_{\Delta^{n+1}\times\left\{0\right\}}$ agrees
with $\sigma$ (after the obvious identification of $\Delta^{n+1}$
with $\Delta^{n+1}\times\left\{0\right\}$) and for
$j=0,\ldots,n+1$
$L_\sigma|_{\partial_j\Delta^{n+1}\times\left[0,1\right]}$ agrees
with $L_{\partial_j\sigma}$. Then define
$$K_{n+1}\left(\sigma\right):=\sum_{j=0}^{n+1}L_\sigma\circ\kappa_{n+1,j}$$
and $$F_{n+1}\left(\sigma\right):=L\circ \tau_{n+1},$$ where
$\tau_{n+1}:\Delta^{n+1}\rightarrow
\Delta^{n+1}\times\left[0,1\right]$ is defined by
$\tau_{n+1}\left(x\right)=\left(x,1\right)$.

It is clear by construction that $\partial K_{n+1}\left(\sigma\right)+K_{n}\left(\partial\sigma\right)=F_{n+1}\left(\sigma\right)-\sigma$ and that $\partial F_{n+1}\left(\sigma\right)=F_n\left(\partial\sigma\right)$.

We have to check that $F_{n+1}\left(\sigma\right)\in
\widehat{C}_{n+1}^x\left(M\right)$. If $\sigma$ is an
$\left(n+1\right)$-simplex in $M$, then all $\partial_j\sigma$ are
$n$-simplices in $M$, hence by induction all vertices of all
$F_n\left(\partial_j\right)$ are in $x$. Because of  $\partial_j
F_{n+1}\left(\sigma\right)=F_n\left(\partial_j\sigma\right)$ this
implies that all vertices of $F_{n+1}\left(\sigma\right)$ are in
$x$, hence $F_{n+1}\left(\sigma\right)\in
\widehat{C}_{n+1}^x\left(M\right)$.

If $\sigma$ is the cone (with cone point $c_i$) over an
$n$-simplex $\tau=\partial_n\sigma$, then we have by inductive
hypothesis that $F_n\left(\partial_{n+1}\sigma\right)$ has all its
vertices in $x$ and moreover that all
$F_n\left(\partial_j\sigma\right)$ with $0\le j\le n$ have their
last vertex in $c_i$. Because of  $\partial_j
F_{n+1}\left(\sigma\right)=F_n\left(\partial_j\sigma\right)$ this
implies that $F_{n+1}\left(\sigma\right)$ has its last vertex in
$c_i$ and the remaining vertices in $x$. Moreover, if $n+1=2$,
then $\partial_2\sigma\in C_1\left(\partial_iM\right)$ and it
follows (from the construction for $n=1$) that the homotopy class
(rel.\ $\left\{0,1\right\}$) of
$\partial_2F_2\left(\sigma\right)=F_1\left(\partial_2\sigma\right)$
belongs to $\Gamma_i\subset\pi_1\left(M,x\right)$. If $n+1\ge 3$,
then, since each edge of $\sigma$ is an edge of some
$\partial_j\sigma$ and since $F_n\left(\partial_j\sigma\right)\in
\widehat{C}_n^x\left(M\right)$, it follows that the homotopy
classes (rel.\ $\left\{0,1\right\}$) of all edges between all but
the last vertices belong to
$\Gamma_i\subset\pi_1\left(M,x\right)$. Thus
$F_{n+1}\left(\sigma\right)\in \widehat{C}_{n+1}^x\left(M\right)$.
\end{proof}

\begin{cor}\label{homologyofhatiso} Let $M$ be a compact, connected, smooth, oriented,
aspherical manifold with aspherical $\pi_1$-injective boundary $\partial
M=\partial_1M\cup\ldots\cup\partial_sM$. Let $x\in M$. Assume
$\Gamma_i\cap\Gamma_j=0$ for $i\not=j$, where
$\Gamma_i\subset\pi_1\left(M,x\right)$ for $i=1,\ldots,s$ is
defined by \hyperref[gammai]{Definition \ref*{gammai}}. Then the
chain map $$F:
\mathrm{Dcone}\left(\oplus_{i=1}^sC_*\left(\partial_iM\right)\rightarrow
C_*\left(M\right)\right)\rightarrow
\widehat{C}_*^x\left(M\right)$$ induces an isomorphism of homology
groups.\end{cor}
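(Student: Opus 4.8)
The plan is to deduce the corollary from \hyperref[chainhomotopy]{Lemma \ref*{chainhomotopy}} together with the fact that the two inclusions $j_1$ and $j_2$ both induce isomorphisms on homology. Indeed, \hyperref[chainhomotopy]{Lemma \ref*{chainhomotopy}} gives $j_1\circ F\simeq j_2$, so on homology $(j_1)_*\circ F_*=(j_2)_*$; once we know that $(j_1)_*$ and $(j_2)_*$ are isomorphisms it follows that $F_*=(j_1)_*^{-1}\circ (j_2)_*$ is one as well (and since all complexes in sight are free and bounded below, $F$ is then even a chain homotopy equivalence). So it remains to prove two assertions: $(j_2)_*$ is an isomorphism, and $(j_1)_*$ is an isomorphism.

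The assertion about $j_2$ is essentially contained in the proof of \hyperref[relativehomology]{Lemma \ref*{relativehomology}}. Up to the standard identification of the simplicial homology of a quasisimplicial set with the singular homology of its geometric realisation, $j_2$ is the canonical comparison map $|\mathrm{Dcone}(\oplus_{i=1}^sC_*(\partial_iM)\to C_*(M))|\to \mathrm{Dcone}(\cup_{i=1}^s\partial_iM\to M)$ considered there. Its constituent maps $|C_*(M)|\to M$ and $|C_*(\partial_iM)|\to\partial_iM$ are homology isomorphisms in all degrees, and the cones are contractible, so the Mayer--Vietoris sequence and the five lemma (exactly as in the proof of \hyperref[relativehomology]{Lemma \ref*{relativehomology}}) show that $j_2$ induces an isomorphism on homology in every degree.

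The assertion about $j_1$ is the heart of the matter and the point at which the hypotheses on $M$ are used. We must show that the inclusion $j_1\colon\widehat{C}_*^x(M)\hookrightarrow C_*(\mathrm{Dcone}(\cup_{i=1}^s\partial_iM\to M))$ is a chain homotopy equivalence; this is the cuspidal analogue of the elementary fact recalled in Section 2.1 that $C_*^{str,x_0}(M)\hookrightarrow C_*(M)$ is a chain homotopy equivalence. The strategy is to build a ``straightening'' retraction in the opposite direction: given a singular simplex of $\mathrm{Dcone}(\cup_i\partial_iM\to M)$ one first drags each of its vertices either to $x$ or, if it lies in a cone, to the corresponding cone point $c_i$, and then homotopes the simplex into canonical position. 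Asphericity of $M$ supplies the homotopies in the interior (the usual Eilenberg--MacLane straightening, resting on the fact that the subcomplex of singular simplices with all vertices at a single point already computes singular homology); asphericity and $\pi_1$-injectivity of each $\partial_iM$, together with the identification $C_{e_i}\colon\pi_1(\partial_iM,x_i)\xrightarrow{\ \sim\ }\Gamma_i$ from \hyperref[gammai]{Definition \ref*{gammai}}, supply the compatible homotopies near the boundary and ensure that the edges of a straightened cone-type simplex really do lie in $\Gamma_i$; and the disjointness of the horoballs together with the hypothesis $\Gamma_i\cap\Gamma_j=0$ for $i\neq j$ ensures that these local constructions for the $s$ cusps do not interfere --- more precisely, that the cone-type simplices attached to the different $c_i$ assemble into a direct sum compatibly with the boundary, so that a Mayer--Vietoris / five lemma comparison of the resulting decompositions of $\widehat{C}_*^x(M)$ and of $C_*(\mathrm{Dcone}(\cup_i\partial_iM\to M))$ goes through. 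This yields that $(j_1)_*$ is an isomorphism, and the corollary follows. The main obstacle is precisely this step: making the straightening of cone-type simplices precise and checking that $\Gamma_i\cap\Gamma_j=0$ is exactly the condition that prevents the $s$ boundary components from interacting homologically; the bookkeeping in low degrees is comparatively routine.
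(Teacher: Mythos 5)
Your reduction of the corollary to the two claims ``$(j_1)_*$ is an isomorphism'' and ``$(j_2)_*$ is an isomorphism'' is logically sound, and the $(j_2)_*$ part is indeed exactly what Lemma \ref{relativehomology} provides. The paper organizes the logic in an equivalent but different way: from $j_{1*}F_*=j_{2*}$ and the fact that $(j_2)_*$ is an isomorphism it gets injectivity of $F_*$ for free, and then proves \emph{surjectivity of $F_*$} directly; given the composition identity these two missing pieces (surjectivity of $F_*$ versus injectivity of $(j_1)_*$) are interchangeable. The problem lies in how you propose to supply the missing piece.

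The gap is concrete: the straightening retraction you describe on $C_*(\mathrm{Dcone}(\cup_{i}\partial_iM\rightarrow M))$ --- drag each vertex to $x$ or to the relevant cone point, then homotope into canonical position --- cannot land in $\widehat{C}_*^x(M)$ for a general singular simplex of the disjoint cone. By Definition \ref{gammai}, a simplex of $\widehat{C}_*^x(M)$ has at most one vertex at a cone point, and only in the last position; a general singular simplex of the topological space $\mathrm{Dcone}(\cup_i\partial_iM\rightarrow M)$ may have several vertices lying in several different cones (or several in the same cone, or a cone point at an interior vertex), and your vertex-dragging produces simplices with multiple cone-point vertices, which simply do not exist in $\widehat{C}_*^x(M)$. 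So the retraction is not defined, and no vertex-by-vertex recipe will define it. The paper sidesteps exactly this difficulty by proving surjectivity of $F_*$ instead: it starts from a cycle that already lies in $\widehat{C}_*^x(M)$, where the cone-point combinatorics is controlled, and homotopes it into the quasisimplicial disjoint cone. Even in that easier setting two steps you omit are essential: first one must discard the simplices having an edge representing $0\in\pi_1(M,x)$ (these form a null-homologous subcycle, by the argument of \cite[Lemma 5.15]{Ku}); only after that does the hypothesis $\Gamma_i\cap\Gamma_j=0$ guarantee that simplices coning to different cusps share no common faces, so that the homotopies of vertices toward the various $x_i$ can be performed simultaneously. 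Then asphericity and $\pi_{*\ge 2}(M,\partial M)=0$ are used to push the higher skeleta into $\partial_iM$, $\mathrm{Cone}(\partial_iM)$ or $M$. Finally, the ``Mayer--Vietoris / five lemma comparison of decompositions'' of $\widehat{C}_*^x(M)$ you invoke is not available as stated: the cone-type simplices of $\widehat{C}_*^x(M)$ are constrained only in their $1$-skeleton, so they do not form subcomplexes matching $C_*(\mathrm{Cone}(\partial_iM))$ in any evident way, and no such decomposition of $\widehat{C}_*^x(M)$ is established (or needed) in the paper.
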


\begin{proof}
\hyperref[chainhomotopy]{Lemma \ref*{chainhomotopy}} implies that
$j_{1*}F_*=j_{2*}$ and \hyperref[relativehomology]{Lemma
\ref*{relativehomology}} implies that $j_{2*}$ is an isomorphism.
Hence $F_*$ is injective. It remains to show that $F_*$ is
surjective, i.e., that every cycle in
$\widehat{C}_*^x\left(M\right)$ is homologous to some cycle of the
form $F_*z$ with $z$ a cycle in
$\mathrm{Dcone}\left(\oplus_{i=1}^sC_*\left(\partial_iM\right)\rightarrow
C_*\left(M\right)\right)$.

Let $\sum_{j=1}^ra_j\sigma_j\in \widehat{C}_*^x\left(M\right)$ be a cycle. Let $$J^{deg}=\left\{j: \sigma_j\mbox{\ has\ an\ edge\ representing\ }0\in\pi_1\left(M,x\right)\right\}.$$ The same argument as in the proof of \cite[Lemma 5.15]{Ku} shows that $\sum_{j\in J^{deg}}a_j\sigma_j$ is a $0$-homologous cycle, thus $\sum_{j\not\in J^{deg}}a_j\sigma_j$ is homologous to $\sum_{j=1}^ra_j\sigma_j$. We can and will therefore without loss of generality assume that no $\sigma_j$ has an edge representing $0\in\pi_1\left(M,x\right)$.

Let $c_1,\ldots,c_s$ be the cone points and for
$i\in\left\{1,\ldots,s\right\}$ let $$J_i=\left\{j:\sigma_j\mbox{\
has\ its\ last\ vertex\ in\ }c_i\right\}.$$ We note that for
$i\not=l$ a simplex in $J_i$ can not have a face in common with a
simplex in $J_l$. Indeed such a face would have edges representing
elements in $\Gamma_i\subset \pi_1\left(M,x\right)$ and
$\Gamma_l\subset\pi_1\left(M,x\right)$ which is impossible because
of $\Gamma_i\cap\Gamma_l=\emptyset$.

Now let $K$ be the simplicial complex defined as a union
$K=\Delta_1\cup\ldots\cup\Delta_s$ of homeomorphic images of the
$d$-dimensional standard simplex with identifications
$\partial_i\Delta_j=\partial_k\Delta_l$ if and only if
$\partial_i\sigma_j=\partial_k\sigma_l$. Let $\sigma:K\rightarrow
\mathrm{Dcone}\left(\cup_{i=1}^s\partial_iM\rightarrow M\right)$
be defined by $\sigma |_{\Delta_j}=\sigma_j$, where the
homeomorphism from $\Delta_j$ to the standard simplex is
understood. By construction,
$\sum_{j=1}^ra_j\sigma_j=\sigma_*\left[\sum_{j=1}^ra_j\Delta_j\right]$.

We will now homotope $\sigma$
such that its image becomes a chain in

$\mathrm{Dcone}\left(\oplus_{i=1}^sC_*\left(\partial_iM\right)\rightarrow
C_*\left(M\right)\right)$. First, if $j\in J_i$, then we homotope
all but the last vertex of $\Delta_j$ from $x$ to $x_i$ along the
path $e_i$ from \hyperref[gammai]{Definition \ref*{gammai}}. Since
for $i\not=l$ simplices in $J_i$ and $J_l$ have no face in common
this can be done simultaneously for all $\Delta_j$ with $j\in
J_1\cup\ldots\cup J_s$. By successive application of the
cofibration property this homotopy can be extended to all of $K$.
For $j\in J_i$ it follows from the definition of $\Gamma_i$ in
\hyperref[gammai]{Definition \ref*{gammai}} that after this
homotopy the edges of $\Delta_j$ opposite to the cone point are
all mapped to loops at $x_i$ homotopic rel. $\left\{0,1\right\}$
into $\partial_iM$. We may thus (using again the cofibration
property to successively extend the homotopy from the $1$-skeleton
to $K$) further homotope $\sigma$ to have all these edges in
$\partial_iM$, and the remaining edges of $\Delta_i$ mapped to
$\mathrm{Cone}\left(\partial_iM\right)$. Finally, since $M$ and
$\partial_iM$ are aspherical we have $\pi_{*\ge 2}(M,\partial M)=0$, thus we can successively further homotope
$\sigma$ such that:

- for $j\in J_i$ all higher-dimensional
subsimplices and finally $\Delta_j$ are mapped to $\partial_iM$
(if they don't contain the cone point) or to
$\mathrm{Cone}\left(\partial_iM\right)$ (if they do contain the
cone point),

- for $j\not\in J_1\cup\ldots\cup J_s$ all higher-dimensional
subsimplices and finally $\Delta_j$ are mapped to $M$.

Thus we obtain a cycle $c$ in
$\mathrm{Dcone}\left(\oplus_{i=1}^sC_*\left(\partial_iM\right)\rightarrow
C_*\left(M\right)\right)$. By construction $F_*c$ is homotopic,
hence homologous, to $\sum_{j=1}^ra_j\sigma_j$.
\end{proof}

\begin{cor}\label{homologyofhat}Under the assumptions of \hyperref[homologyofhatiso]{Corollary \ref*{homologyofhatiso}} we have $$H_d\left(\widehat{C}_*^x\left(M\right);{\br}\right)\cong{\br}$$
for $d=dim(M)\ge 2$ and $x\in M$.\end{cor}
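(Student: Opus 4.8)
The plan is to obtain the statement by simply composing the two preceding results, so the ``proof'' is really a two-line chase. First I would invoke \hyperref[homologyofhatiso]{Corollary \ref*{homologyofhatiso}}: under exactly the hypotheses imposed here it produces a chain map
$$F:\mathrm{Dcone}\left(\oplus_{i=1}^sC_*\left(\partial_iM\right)\rightarrow C_*\left(M\right)\right)\rightarrow \widehat{C}_*^x\left(M\right)$$
inducing an isomorphism on homology. Although the statement of that corollary does not decorate the homology with a coefficient ring, the only tools used in its proof are the explicit chain homotopy of \hyperref[chainhomotopy]{Lemma \ref*{chainhomotopy}} (built by pasting continuous maps of simplices) and the cofibration/asphericity homotopies in the surjectivity argument; since all chain complexes in sight are free, everything survives tensoring with $\br$, so $F_*$ is still an isomorphism on homology with $\br$-coefficients.

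Next I would apply the ``in particular'' clause of \hyperref[relativehomology]{Lemma \ref*{relativehomology}}, which asserts
$$H_d^{simp}\left(\mathrm{Dcone}\left(\oplus_{i=1}^sC_*\left(\partial_iM\right)\rightarrow C_*\left(M\right);\br\right)\right)\cong\br$$
for $d=\dim(M)\ge 2$; this in turn rests on \hyperref[relativehomology1]{Lemma \ref*{relativehomology1}} together with the fact that $H_d\left(M,\partial M;\br\right)\cong\br$ because $M$ is a compact connected oriented $d$-manifold. Composing the two isomorphisms gives $H_d\left(\widehat{C}_*^x\left(M\right);\br\right)\cong\br$, and one sees that a generator is represented by $F_*$ applied to (a cycle representing) the relative fundamental class $[M,\partial M]$.

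There is no real obstacle here; the substantive work has already been carried out in \hyperref[chainhomotopy]{Lemma \ref*{chainhomotopy}}, \hyperref[relativehomology]{Lemma \ref*{relativehomology}} and \hyperref[homologyofhatiso]{Corollary \ref*{homologyofhatiso}}. The only point deserving a sentence of care is the passage from $\bz$- to $\br$-coefficients noted above, and, if one wants it, the identification of the generator with the fundamental class, which is immediate once the isomorphism $F_*$ and the isomorphism of \hyperref[relativehomology]{Lemma \ref*{relativehomology}} are traced through.
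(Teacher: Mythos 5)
Your proposal is correct and is exactly the argument the paper intends: the corollary is stated without proof precisely because it is the composition of the isomorphism $F_*$ from \hyperref[homologyofhatiso]{Corollary \ref*{homologyofhatiso}} with the computation $H_d^{simp}\left(\mathrm{Dcone}\left(\oplus_{i=1}^sC_*\left(\partial_iM\right)\rightarrow C_*\left(M\right)\right);\br\right)\cong\br$ from \hyperref[relativehomology]{Lemma \ref*{relativehomology}}. Your remark about passing to $\br$-coefficients (harmless since the chain maps and homotopies are defined on free complexes at the chain level) and about the generator being the image of the relative fundamental class is a reasonable, if not strictly necessary, addition.
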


\section{Eilenberg-MacLane map}
Recall the homeomorphism of tuples as we describe in Section
\ref{sec:CuspidalCompletion}, $$(M, \partial_1 M, \ldots,
\partial_s M)\ra \left( \Gamma \backslash \left(X-\cup_{i=1}^s
\Gamma B_i \right), \Gamma_1 \backslash H_1,\ldots,\Gamma_s
\backslash H_s \right).$$ Let $c_i$ denote the cone point of
$\mathrm{Cone}(\partial_i M)$. Identifying each
$\mathrm{Cone}(\partial_i M)-c_i$ with $\Gamma_i \backslash B_i$,
we have a homeomorphism $$\Gamma \backslash X \ra
\mathrm{Dcone}(\cup_{i=1}^s
\partial_i M \ra M) - \{c_1,\ldots,c_s\}$$ extending the
homeomorphism of tuples above. Composition of the universal covering $X \ra
\Gamma \backslash X$ with this homeomorphism yields a covering map
$$X \ra \mathrm{Dcone}(\cup_{i=1}^s \partial_i M \ra M) -
\{c_1,\ldots,c_s\}.$$ Then we finally have a projection map
$$\pi : X \cup \cup_{i=1}^s \Gamma \partial_\infty B_i \ra \mathrm{Dcone}(\cup_{i=1}^s \partial_i M \ra M)$$
such that $\pi|_X : X \ra \mathrm{Dcone}(\cup_{i=1}^s \partial_i M
\ra M) - \{c_1,\ldots,c_s\}$ is a covering, $\pi|_{\Gamma B_i} :
\Gamma B_i \ra \mathrm{Cone}(\partial_i M)-C_i$ is a covering with
deck group $\Gamma$ and $\pi$ maps $\Gamma \partial_\infty B_i$ to
$c_i$ for $i=1,\ldots,s$. Due to this projection map, we can
define the notion of (ideal) straight simplex in
$\mathrm{Dcone}(\cup_{i=1}^s
\partial_i M \ra M)$.

\begin{defi}\label{def:straight}
We say that a $k$-simplex $\sigma$ in $\mathrm{Dcone}(\cup_{i=1}^s
\partial_i M \ra M)$ is \emph{straight} if $\sigma$ is of the form
$\pi(str(u_0,\ldots,u_k))$ for $u_0,\ldots,u_k \in X \cup
\cup_{i=1}^s \Gamma \partial_\infty B_i$.
\end{defi}

\begin{remark}
In the $\br$-rank $1$ case, every $\partial_\infty B_i$ consists of a
point in $\partial_\infty X$ and for any ordered pair $ (u_0,\ldots,u_k) \in X \cup
\cup_{i=1}^s \Gamma \partial_\infty B_i$, $str(u_0,\ldots,u_k)$ is well
defined. In contrast, in the higher rank
case, each $\partial_\infty B_i$ is not a point. More precisely, if $B_i$ is any horoball centered at $z_i$,
then $$\partial_\infty B_i =\left\{ w \in \partial_\infty X \ \Big| \ Td(z_i,w)\leq \frac{1}{2}\pi \right\},$$
where $Td$ is the Tits metric on $\partial_\infty X$ (see \cite{Hattori}).
Furthermore, $str(u_0,\ldots,u_k)$ may not be defined for some ordered pair $(u_0,\ldots,u_k)$.
\end{remark}

We denote $$\widehat{C}_*(M) :=
C_*(\mathrm{DCone}(\cup_{i=1}^s \partial_i M \ra M)).$$
Recall from \hyperref[gammai]{Definition \ref*{gammai}} that we choose base points $x_0,x_1,\ldots,x_s$ of $M$, $\partial_1 M,\ldots,\partial_s M$ respectively and identify $\pi_1(\partial_i M,x_i)$
with a subgroup $\Gamma_i$ of $\pi_1(M,x_0)$ by choosing a path connecting $x_0$ and
$x_i$ for $i=1,\ldots,s$.


The assumptions of \hyperref[homologyofhat]{Corollary \ref*{homologyofhat}} are satisfied for $\bq$-rank 1 spaces, thus we have
$$H_d(\widehat{C}_*^{x_0}(M),\bq) =
H_d(\widehat{C}_*(M),\bq)=H_d(M,\partial M,\bq)=\bq.$$

We define a chain complex
$$\widehat{C}^{str,x_0}_*(M):=\mathbb{Z}[\{ \sigma \in
\widehat{C}_*^{x_0}(M) \ | \ \sigma \text{ is straight} \}],$$ and
for the $s$-tuple $\left(c_1,\ldots,c_s\right)$ with
$c_i\in\partial_\infty B_i$ (see the setup in Section
\ref{setup}), we define the subcomplex
$\widehat{C}_*^{str,x_0,c}(M)$ of $\widehat{C}_*^{str,x_0}(M)$
freely generated by those simplices that are either of the form
$$\sigma=\pi(str(\tilde{x}_0,\gamma_1\tilde{x}_0,\ldots,\gamma_1\cdots
\gamma_k \tilde{x}_0))$$ where $\tilde{x}_0$ is a lift of $x_0$
and $\gamma_1,\ldots,\gamma_k \in \Gamma$ or of the form
$$\sigma=\pi(str(\tilde{x}_0,p_1\tilde{x}_0,\ldots,p_1\cdots
p_{k-1}\tilde{x}_0,c_i))$$ for $p_1,\ldots,p_{k-1} \in \Gamma_i$
and $i \in \{1,\ldots,s\}$.

\begin{lemma}\label{lem:EMmap}
The following hold.
\begin{itemize}
\item[(a)] There is an isomorphism of chain complexes $$\Phi :
    \widehat{C}^{str,x_0,c}_*(M) \ra
    C^{simp}_*(B\Gamma^{comp}).$$
\item[(b)] The inclusion $\widehat{C}^{str,x_0,c}_*(M) \ra
    \widehat{C}_*(M)$ induces an isomorphism
    $$H_d(\widehat{C}^{str,x_0,c}_*(M),\mathbb{Q}) \ra
    H_d(\mathrm{Dcone}(\cup_{i=1}^s \partial_i M \ra
    M),\mathbb{Q}).$$
\item[(c)] The composition of $\Phi^{-1}$ with the inclusion $
    \widehat{C}^{str,x_0,c}_*(M) \ra \widehat{C}_*(M)$ induces
    an isomorphism $$ \mathrm{EM}_d :
    H^{simp}_d(B\Gamma^{comp},\bq) \ra
    H_d(\mathrm{Dcone}(\cup_{i=1}^s \partial_i M \ra
    M),\bq).$$
\end{itemize}
\end{lemma}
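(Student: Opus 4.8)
The plan is to establish (a), (b), (c) in that order, with part (a) being mostly a bookkeeping verification, part (b) the substantive analytic/combinatorial step, and part (c) a formal consequence.

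For part (a) I would define $\Phi$ on generators exactly as in Section 2.1: send $\pi(str(\tilde x_0,\gamma_1\tilde x_0,\ldots,\gamma_1\cdots\gamma_k\tilde x_0))$ to the simplex $(\gamma_1,\ldots,\gamma_k)$ of $B\Gamma$, and send the ideal generator $\pi(str(\tilde x_0,p_1\tilde x_0,\ldots,p_1\cdots p_{k-1}\tilde x_0,c_i))$ to the simplex $(p_1,\ldots,p_{k-1},c_i)$ of $\mathrm{Cone}(B\Gamma_i)\subset B\Gamma^{comp}$. Well-definedness and bijectivity on generators follow because a straight simplex in $X$ (resp.\ an ideal straight simplex with last vertex $c_i$, which exists and has finite volume by \hyperref[lem:IdealStraight]{Lemma \ref*{lem:IdealStraight}}) is uniquely determined by its ordered tuple of (possibly ideal) vertices, and because $\Gamma_i=\mathrm{Stab}_\Gamma(c_i)$ forces the coned simplices of $\widehat C^{str,x_0,c}_*(M)$ to be precisely indexed by tuples in $\Gamma_i$. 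The only thing to check is compatibility with the boundary operators; here one matches the face maps of a straight simplex (a face of $str(\tilde x_0,g_1\tilde x_0,\ldots)$ is again straight, with vertices obtained by omission, and translating the first vertex back to $\tilde x_0$ produces the $g_ig_{i+1}$-type merges) against the simplicial boundary $\partial$ on $B\Gamma^{comp}$, and against the cone boundary formula $\partial\,\mathrm{Cone}(\sigma)=\mathrm{Cone}(\partial\sigma)+(-1)^{\dim\sigma+1}\sigma$ from Section 5.2 for the ideal generators. This is the same verification as for $\Psi,\Phi$ in Section 2.1, now carried out on the cuspidal completions, so I would state it and refer to that computation.

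For part (b), the goal is to show the straight, based subcomplex computes the same degree-$d$ homology as the full chain complex of $\mathrm{Dcone}(\cup_i\partial_iM\to M)$. My approach is to factor the inclusion through the intermediate object $\widehat C^{x_0}_*(M)$ of \hyperref[gammai]{Definition \ref*{gammai}}: by the discussion following \hyperref[homologyofhat]{Corollary \ref*{homologyofhat}} we already know $H_d(\widehat C^{x_0}_*(M),\bq)\cong H_d(\mathrm{Dcone},\bq)\cong\bq$, so it suffices to show the inclusion $\widehat C^{str,x_0,c}_*(M)\hookrightarrow\widehat C^{x_0}_*(M)$ induces an isomorphism in degree $d$. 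I would build a chain homotopy inverse using the straightening operator $str$ of \cite[Section 2.1]{Ku} together with the horoball projection $\pi$ from Section 7: on simplices of $\widehat C^{x_0}_*(M)$ lying in the compact part, apply the usual straightening with cone point $\tilde x_0$; on simplices that cone over $\partial_iM$, lift to $X\cup\Gamma\partial_\infty B_i$, straighten the non-ideal vertices to $p_1\cdots p_j\tilde x_0$ and send the cone vertex to $c_i$, using \hyperref[lem:IdealStraight]{Lemma \ref*{lem:IdealStraight}} to guarantee the resulting ideal straight simplex is an honest (finite) singular simplex in $\mathrm{Dcone}$. The straight-line homotopy between a simplex and its straightening stays inside $\mathrm{Dcone}$ and respects the boundary decomposition because each $\Gamma_i$ fixes $c_i$; this gives a chain homotopy $\mathrm{id}\simeq i\circ str$ on $\widehat C^{x_0}_*(M)$ and $str\circ i=\mathrm{id}$ on the straight subcomplex, so both are chain homotopy equivalences and in particular induce isomorphisms in all degrees, a fortiori in degree $d$.

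For part (c), simply compose: $\mathrm{EM}_d$ is defined as $(j)_*\circ(\Phi_*)^{-1}$ where $\Phi_*$ is the isomorphism from (a) and $(j)_*$ is the degree-$d$ isomorphism from (b), hence $\mathrm{EM}_d$ is an isomorphism and the lemma is proved. The main obstacle is the chain-homotopy construction in part (b): one must check that straightening a simplex that touches the cone points produces a \emph{finite} singular simplex rather than merely a formal geodesic cone to infinity, and that the straightening homotopy can be performed $\Gamma$-equivariantly and coherently across faces shared between a cone-simplex and a simplex in the compact part — the disjointness of the horoballs $B_i$ in the $\bq$-rank one case (Section 6) and the assumption $\Gamma_i\cap\Gamma_j=0$ from \hyperref[homologyofhatiso]{Corollary \ref*{homologyofhatiso}} are exactly what make this coherence possible. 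Everything else is a transcription of the $\br$-rank one arguments of \cite{Ku} to the cuspidal completion setting.
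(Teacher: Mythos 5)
Parts (a) and (c) of your proposal match the paper. The problem is part (b). Your plan is to produce a chain-homotopy inverse to the inclusion $\widehat{C}^{str,x_0,c}_*(M)\hookrightarrow\widehat{C}^{x_0}_*(M)$ by geodesic straightening, ``sending the cone vertex to $c_i$,'' together with a straight-line homotopy. This is precisely the step that breaks down in the $\bq$-rank one setting, and the paper says so explicitly in the Remark following the lemma: the straightening map $str_*:\widehat{C}^{x_0}_*(M)\to\widehat{C}^{str,x_0}_*(M)$ is \emph{not} well defined here. The reason is that in higher rank $\partial_\infty B_i$ is not a single point (it is a Tits ball of radius $\pi/2$ about $c_i$), so the lift of a singular simplex whose last vertex is the cone point escapes to infinity inside some translate $\gamma B_i$ without converging to any distinguished point of $\partial_\infty X$; declaring the ideal vertex of its straightening to be $\gamma c_i$ is an ad hoc choice, after which you would still have to verify that the assignment is a chain map compatible with faces and that the straightening homotopy extends continuously over the cone points (this needs convexity of Busemann functions and a coherence check between ideal and non-ideal simplices sharing faces) --- none of which you do; you assert it in one sentence. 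A further warning sign: your argument, if it worked, would give an isomorphism in \emph{all} degrees, whereas the lemma claims only degree $d$, which is exactly what the authors' weaker method can reach.

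The paper's actual argument for (b) avoids straightening entirely. It extends $\Phi$ to all of $\widehat{C}^{x_0}_*(M)$ by reading off homotopy classes of edges, observes that $\Psi\circ\Phi\circ i=\mathrm{id}$, hence $i_*$ is injective; Corollary \ref{homologyofhat} gives $H_d(\widehat{C}^{x_0}_*(M),\bq)\cong\bq$, so $H_d(\widehat{C}^{str,x_0,c}_*(M),\bq)$ is at most one-dimensional; and then a forward reference to Lemma \ref{volume2} --- the pairing of the extended volume cocycle with $EM_d^{-1}[M,\partial M]$ equals $\mathrm{Vol}(N)\neq 0$ --- shows this group is nonzero, forcing $i_*$ to be onto in degree $d$. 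This nonvanishing input is essential to surjectivity and is entirely absent from your proposal; without it, or without an honest construction of the straightening chain homotopy you invoke, part (b) is not proved.
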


\begin{proof}
(a) We define a chain isomorphism $\Phi :
\widehat{C}^{str,x_0,c}_*(M) \ra C^{simp}_*(B\Gamma^{comp})$ as
follows: For a straight $k$-simplex $\sigma$ of the form
$$\sigma=\pi(str(\tilde{x}_0,\gamma_1\tilde{x}_0,\ldots,\gamma_1\cdots
\gamma_k \tilde{x}_0)),$$ define
$\Phi(\sigma)=(\gamma_1,\ldots,\gamma_k).$ For a straight
$k$-simplex $\sigma$ of the form
$$\sigma=\pi(str(\tilde{x}_0,p_1\tilde{x}_0,\ldots,p_1\cdots p_{k-1}\tilde{x}_0,c_i))$$
with $p_1,\ldots,p_{k-1} \in \Gamma_i$ and $i \in \{1,\ldots,s\}$,
define $\Phi(\sigma)=(p_1,\ldots,p_{k-1},c_i).$ Extending this
linearly to $\widehat{C}^{str,x_0}_*(M)$, we obtain a chain map
$$\Phi : \widehat{C}^{str,x_0,c}_*(M) \ra
C^{simp}_*(B\Gamma^{comp}).$$

Conversely, one can define a chain map $\Psi :
C^{simp}_*(B\Gamma^{comp})\ra \widehat{C}^{str,x_0,c}_*(M)$ as
follows: For a $k$-simplex of the form
$(\gamma_1,\ldots,\gamma_k)$, define
$$\Psi(\gamma_1,\ldots,\gamma_k)=\pi(str(\tilde{x}_0,\gamma_1\tilde{x}_0,\ldots,\gamma_1\cdots
\gamma_k \tilde{x}_0)).$$ For a $k$-simplex of the form
$(p_1,\ldots,p_{k-1},c_i)$ with $p_1,\ldots,p_{k-1}\in \Gamma_i$,
define
$$\Psi(p_1,\ldots,p_{k-1},c_i)=\pi(str(\tilde{x}_0,p_1\tilde{x}_0,\ldots,p_1\cdots
p_{k-1}\tilde{x}_0,c_i)).$$
By construction, it is clear that $\Phi$ and $\Psi$ are inverse to each other.
Hence, $\Phi$ and $\Psi$ are isomorphisms between the two chain complexes.\\

(b) Note that one can extend the domain of $\Phi$ to
$\widehat{C}^{x_0}_*(M)$ in the following way. Let $\sigma$ be a
$k$-simplex in $\widehat{C}^{x_0}_*(M)$ and $e_0,\ldots,e_k$ be
the vertices of the standard simplex $\Delta^k$. For
$i=1,\ldots,k$, let $\zeta_i$ be the standard sub-1-simplex with
$\partial \zeta_i=e_i-e_{i-1}$. Define
$$\Phi(\sigma)=([\sigma|_{\zeta_1}],\ldots,[\sigma|_{\zeta_k}]),$$
where each $[\sigma|_{\zeta_i}]\in \Gamma=\pi_1(M,x_0)$ is the
homotopy class of $\sigma|_{\zeta_i}$ if all vertices of $\sigma$
are in $x_0$ and
$$\Phi(\sigma)=([\sigma|_{\zeta_1}],\ldots,[\sigma|_{\zeta_{k-1}}],c_i),$$
if the last vertex of $\sigma$ is $C_i$.
Then consider the composition of maps as follows:
$$ \xymatrixcolsep{2pc}\xymatrix{
\widehat{C}^{str,x_0,c}_*(M) \ar[r]^-{i} & \widehat{C}^{x_0}_*(M)
\ar[r]^-{\Phi} & C^{simp}_*(B\Gamma^{comp}) \ar[r]^-{\Psi} & \widehat{C}^{str,x_0,c}_*(M)
}$$
Obviously, $\Psi \circ \Phi \circ i = id$ and thus we have an injective homomorphism
$$i_* : H_d(\widehat{C}^{str,x_0,c}_*(M),\bq) \ra  H_d(\widehat{C}^{x_0}_*(M),\bq).$$
From \hyperref[homologyofhat]{Corollary \ref*{homologyofhat}} we obtain $H_d(\widehat{C}^{x_0}_*(M),\bq)\cong\bq$, thus $ H_d(\widehat{C}^{str,x_0,c}_*(M),\bq)$ as a vector space
over $\bq$ is at most one dimensional.
On the other hand \hyperref[volume2]{Lemma \ref*{volume2}} below
shows that $EM_d^{-1}[M,\partial M]$ is a nontrivial element in\\
$H_d(B\Gamma^{comp},\br)\cong
H_d(\widehat{C}^{str,x_0,c}_*(M),\br)$, because evaluation of some
cocycle is not zero. Hence, $i_*$ is actually an isomorphism.
Furthermore, considering that by
\hyperref[homologyofhatiso]{Corollary \ref*{homologyofhatiso}} the
inclusion $\widehat{C}^{x_0}_*(M) \ra \widehat{C}_*(M)$ is a
homology equivalence, one can conclude that the inclusion
$\widehat{C}^{str,x_0,c}_*(M) \ra \widehat{C}_*(M)$ induces an
isomorphism $$H_d(\widehat{C}^{str,x_0,c}_*(M),\mathbb{Q}) \ra
H_d(\mathrm{Dcone}(\cup_{i=1}^s \partial_i M \ra M),\mathbb{Q}).$$

Finally, (c) follows from (a) and (b).
\end{proof}

\begin{remark}
In the $\br$-rank $1$ case, the geodesic straightening map $str_*
: \widehat{C}^{x_0}_*(M) \ra \widehat{C}^{str,x_0}_*(M)$ that is a
left-inverse to the inclusion $\widehat{C}^{str,x_0}_*(M) \subset
\widehat{C}^{x_0}_*(M)$ is well defined and
$C_*^{simp}(B\Gamma^{comp})$ is isomorphic to
$\widehat{C}^{str,x_0}_*(M)=\widehat{C}^{str,x_0,c}_*(M)$. (See
\cite{Ku}). However, in the higher rank case, the straightening
map $str_* : \widehat{C}^{x_0}_*(M) \ra
\widehat{C}^{str,x_0}_*(M)$ is not well defined as we mentioned in
Section \ref{sec:StraightSimplex}. Hence
$C_*^{simp}(B\Gamma^{comp})$ is not isomorphic to
$\widehat{C}^{str,x_0}_*(M)$ but is isomorphic to its subcomplex
$\widehat{C}^{str,x_0,c}_*(M)$. Despite such differences with
$\br$-rank $1$ case, we obtain the homology class
$$EM_d^{-1}[M,\partial M] \in H^{simp}_d(B\Gamma^{comp},\bq)$$ in
the same manner as in the $\br$-rank $1$ case. This will enable us in Section 7 to define
an invariant in K-theory for a $\bq$-rank $1$ locally symmetric
space.
\end{remark}

Recall that the volume cocycle $comp(\nu_d)=c\nu_d \in C^d_{simp}(BG)$
is defined by
$$c\nu_d(g_1,\ldots,g_d)=\int_{str(\tilde{x}_0,g_1 \tilde{x}_0, \ldots, g_1\cdots g_d \tilde{x}_0)} dvol_X,$$
where $dvol_X$ is the $G$-invariant Riemannian volume form on $X=G/K$.
If $X$ is a $\br$-rank $1$ symmetric space, one can extend this
volume cocycle $c\nu_d$ to a cocycle $\overline{c\nu}_d \in
C^d_{simp}(BG^{comp})$. In the higher rank case, one cannot
obtain an extended volume cocycle in $C^d_{simp}(BG^{comp})$.
However, we can extend the volume cocycle in $C^d_{simp}(B\Gamma)$
to at least a cocycle in $C^d_{simp}(B\Gamma^{comp})$. Define a
cocycle $\overline{c\nu}_d \in C^d_{simp}(B\Gamma^{comp})$ as
follows: For a $d$-simplex $(\gamma_1,\ldots,\gamma_d)$ with
$\gamma_1,\ldots,\gamma_d \in \Gamma$, define
$$\overline{c\nu}_d(\gamma_1,\ldots,\gamma_d)=c\nu_d(\gamma_1,\ldots,\gamma_d).$$
For a $d$-simplex $(p_1,\ldots,p_{d-1},c_i)$ with
$p_1,\ldots,p_{d-1} \in \Gamma_i$ and $i \in \{1,\ldots,s\}$,
define
$$\overline{c\nu}_d(p_1,\ldots,p_{d-1},c_i)=\int_{str(\tilde{x}_0,p_1
\tilde{x}_0, \ldots, p_1\cdots p_{d-1} \tilde{x}_0,c_i)} dvol_X.$$
It follows from Lemma \ref{lem:IdealStraight} that
$\overline{c\nu}_d$ is well defined. An application of Stokes' Theorem (to compact submanifolds with boundary of every ideal simplex exactly as in the proof of \hyperref[volume2]{Lemma \ref*{volume2}} below) shows that
$\overline{c\nu}_d$ is a cocycle in $C^d_{simp}(B\Gamma^{comp})$.
Hence, the cocycle $\overline{c\nu}_d$ determines a cohomology
class in $H^d_{simp}(B\Gamma^{comp})$, denoted by
$\overline{v}_d$.

\begin{lemma}\label{volume2}
If $N$ is a $\bq$-rank $1$ locally symmetric space of dimension at least $3$, then
$$\langle \overline{v}_d, EM_d^{-1}[M,\partial M] \rangle = \mathrm{Vol}(N).$$
\end{lemma}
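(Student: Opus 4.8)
The plan is to compute the pairing $\langle \overline{v}_d, EM_d^{-1}[M,\partial M]\rangle$ by choosing an explicit representative of the relative fundamental class whose simplices, after pushing to $B\Gamma^{comp}$, can be evaluated against $\overline{c\nu}_d$ as a sum of (finite) volumes of straight and ideal straight simplices. First I would use the fundamental domain $F=\Omega_0\cup\coprod_{i=1}^s U_i\times A_{\mathbf{P}_i,t_i}\times V_i$ from \hyperref[thm:reduction theory]{Theorem \ref*{thm:reduction theory}} to build a geometric fundamental cycle for $(M,\partial M)$: triangulate the truncated manifold $\Gamma\backslash(X-\cup_i\Gamma B_i)$ by straight simplices with all vertices at (lifts of) $x_0$, and triangulate each cusp cross-section times $[t_i,\infty)$ by ideal straight simplices with top vertex $c_i$, using \hyperref[lem:IdealStraight]{Lemma \ref*{lem:IdealStraight}} to guarantee these have finite volume. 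Gluing gives a relative cycle $z=\sum a_j\sigma_j$ representing $[M,\partial M]$, and by \hyperref[lem:EMmap]{Lemma \ref*{lem:EMmap}} its image under $\Phi_*\circ str_*$ represents $EM_d^{-1}[M,\partial M]$ in $H_d^{simp}(B\Gamma^{comp},\bq)$; because all simplices are already straight with vertices at $x_0$ or at a cone point $c_i$, this image is literally $\sum a_j\Phi(\sigma_j)$ with each $\Phi(\sigma_j)$ either $(\gamma_1,\dots,\gamma_d)$ or $(p_1,\dots,p_{d-1},c_i)$.

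Next I would evaluate: by definition of $\overline{c\nu}_d$ (both on the group simplices and on the ideal ones), $\langle\overline{c\nu}_d, \sum a_j\Phi(\sigma_j)\rangle = \sum_j a_j\,\mathrm{algvol}(str\,\sigma_j) = \sum_j a_j\int_{str\,\sigma_j}dvol_X$, where for the cusp simplices this uses the parametrization $\psi(s,t)=(x(s),y(s)+t,z(s))$ from the proof of \hyperref[lem:IdealStraight]{Lemma \ref*{lem:IdealStraight}}. Since the straight simplices cover the truncated core $\Omega_0$ and the ideal straight simplices cover the cusps (up to sets of measure zero, and with signs matching the orientation because $z$ is a fundamental cycle), the total is $\int_N dvol_X = \mathrm{Vol}(N)$. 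This is the same computation as in \cite[Theorem 1]{Ku} and the proof of \hyperref[evaluation]{Proposition \ref*{evaluation}}, now carried out on a relative fundamental cycle adapted to the cusp decomposition.

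The one genuine point that needs care — and the main obstacle — is the cocycle property of $\overline{c\nu}_d$, i.e. that the pairing is independent of the chosen representative; this is the step the excerpt already flagged, to be handled "exactly as in the proof of Lemma \ref{volume2}". The issue is that $\overline{c\nu}_d$ is defined by an improper integral over non-compact ideal simplices, so one cannot directly apply Stokes on a $(d+1)$-simplex with an ideal vertex. The remedy is the standard truncation argument: for an ideal $(d+1)$-simplex with top vertex $c_i$, cut off a horoball neighbourhood of $c_i$ at parameter level $t=T$, apply Stokes to the resulting compact submanifold with boundary (the form $dvol_X$ being closed), and let $T\to\infty$; the boundary contribution from the truncating horosphere tends to $0$ because, as the estimate in \hyperref[lem:IdealStraight]{Lemma \ref*{lem:IdealStraight}} shows, $\psi^*dvol_X$ decays like $e^{-2\|\rho_{\mathbf{P}}\|t}$, so the slice at level $T$ has volume $O(e^{-2\|\rho_{\mathbf{P}}\|T})\to 0$. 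This yields $\partial^*\overline{c\nu}_d=0$ on $B\Gamma^{comp}$, hence the pairing descends to homology and the computation above gives $\mathrm{Vol}(N)$ regardless of the triangulation. The remaining verifications — that the pieces fit together into a genuine relative cycle and that orientations are consistent — are routine and parallel to \cite{Ku}.
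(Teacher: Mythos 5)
Your overall strategy --- exhibit a representative of $EM_d^{-1}[M,\partial M]$ built from a straight cycle on the thick part plus geodesic cones to the cusps, and justify the cocycle property of $\overline{c\nu}_d$ by truncating along horospheres and using the decay $e^{-2\|\rho_{\mathbf P}\|t}$ from Lemma \ref{lem:IdealStraight} --- matches the spirit of the paper's argument, and your treatment of the cocycle property itself is fine: for \emph{straight} ideal simplices the parametrization $\psi(s,t)=(x(s),y(s)+t,z(s))$ gives a canonical exhaustion by compact pieces with exponentially small boundary terms. The gap is in the evaluation step. A chain in $\widehat{C}^{str,x_0,c}_*(M)$ consists by definition of straight simplices all of whose non-ideal vertices lie over the single point $x_0$; such simplices necessarily overlap and degenerate, so they cannot form a genuine triangulation of the truncated manifold, and your assertion that they ``cover the truncated core and the cusps up to measure zero with signs matching the orientation'' is precisely the identity $\mathrm{algvol}=\mathrm{Vol}(N)$ that the lemma asserts, not something one can read off from the construction. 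To make it rigorous one must start from an honest relative fundamental cycle $z$ (with many distinct vertices), move its vertices to $x_0$, adjoin $(-1)^{d+1}\mathrm{Cone}_g(\partial z^0)$ --- whose algebraic volume is then visibly $\mathrm{Vol}(M)+\sum_i\mathrm{Vol}(\Gamma_i\backslash B_i)=\mathrm{Vol}(N)$, using $\dim A_{\mathbf P_i}=1$ so that the geodesic cone sweeps out the whole cusp --- and then \emph{straighten}; the entire content of the lemma is that this last straightening does not change the algebraic volume.

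That comparison is exactly where your truncation argument no longer applies. One has
$$str(c(z^0))-c(z^0)=\partial\left(L_d(z^0)+(-1)^{d+1}\mathrm{Cone}_g(L_{d-1}(\partial z^0))\right),$$
where $L$ is the chain homotopy from $str$ to the identity, and the $(d+1)$-simplices occurring in $\mathrm{Cone}_g(L_{d-1}(\partial z^0))$ are ideal but \emph{not straight}: they admit no parametrization of the form $\psi(s,t)$, so there is neither an a priori exponential decay estimate nor a canonical exhaustion by compact pieces. The paper resolves this by first homotoping these simplices to be transverse to a sequence of horospheres $\mathcal{H}_{ik}$ converging to $c_i$ (Relative Transversality), so that the preimages of the $\mathcal{H}_{ik}$ bound compact submanifolds $\Omega_{ik}\subset\Delta^{d+1}$ to which Stokes' Theorem applies, the horospherical boundary contribution vanishing for dimension reasons (a $d$-dimensional preimage mapping into a $(d-1)$-dimensional horosphere). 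Your proposal, which applies Stokes only to straight ideal simplices, does not cover this step, and without it the evaluation $\langle\overline{v}_d,EM_d^{-1}[M,\partial M]\rangle=\mathrm{Vol}(N)$ is not justified.
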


\begin{proof}
Let $z$ be a relative fundamental cycle in $C_d(M,\partial M)$
representing the relative fundamental class $[M,\partial M]$. We
think of $M$ as a submanifold of $N$ via the homeomorphism of
tuples $$(M, \partial_1 M, \ldots, \partial_s M)\ra \left( \Gamma
\backslash \left(X-\cup_{i=1}^s \Gamma B_i \right), \Gamma_1
\backslash H_1,\ldots,\Gamma_s \backslash H_s \right)$$ and $z$ as
a chain in $C_d(N)$.

Since $\partial z$ represents $[\partial M]$, we can write
$\partial z =\partial_1 z + \cdots + \partial_s z$ where
$\partial_i z$ is a cycle representing $[\partial_i M]$ for
$i=1,\ldots,s$. Make $z$ a chain $z^0$ in $\widehat{C}^{x_0}_d(M)$
via the chain homotopy $\widehat{C}_*(M) \ra
\widehat{C}^{x_0}_*(M)$ in the proof of \cite[Lemma 8]{Ku}. Note
that $z^0$ is obtained by adding several $1$-dimensional paths to $z$ and hence
$$algvol(z^0)=algvol(z)=\mathrm{Vol}(M).$$

Now, consider a geodesic cone $\mathrm{Cone}_g(\partial_i z)$ over
$\partial_i z$ with the top point $c_i$ for $i=1,\ldots,s$. Due to
$\dim A_{\mathbf{P}_i}=1$, it is not difficult to see that
$$algvol(\mathrm{Cone}_g(\partial_i
z))=(-1)^{d+1}\mathrm{Vol}(\Gamma_i \backslash B_i).$$ Since
$\mathrm{Cone}_g(\partial_i z^0)$ is obtained by adding two
dimensional objects of $N$ to $\mathrm{Cone}_g(\partial_i z)$, its
algebraic volume is not changed, that is,
$$algvol(\mathrm{Cone}_g(\partial_i z^0))=algvol(\mathrm{Cone}_g(\partial_i z))=(-1)^{d+1}\mathrm{Vol}(\Gamma_i \backslash B_i).$$

Now, define $c(z^0)=z^0+(-1)^{d+1}\mathrm{Cone}_g(\partial z^0)$.
Then it can be checked that $c(z^0)$ is a cycle in $\widehat{C}_d^{x_0}(M)$ by
{\setlength\arraycolsep{2pt}
\begin{eqnarray*}
\partial c(z^0) &=& \partial z^0 + (-1)^{d+1}\partial \mathrm{Cone}_g(\partial z^0) \\
&=&\partial z^0 + (-1)^{d+1} \mathrm{Cone}_g(\partial\partial z^0) + (-1)^{d+1} (-1)^d \partial z^0 = 0.
\end{eqnarray*}}
Furthermore, we have
{\setlength\arraycolsep{2pt}
\begin{eqnarray*}
algvol(c(z^0)) &=& \mathrm{Vol}(M)+\sum_{i=1}^s \mathrm{Vol}(\Gamma_i \backslash B_i) \\
&=& \mathrm{Vol}(M)+\mathrm{Vol}(N-M)=\mathrm{Vol}(N).
\end{eqnarray*}}
Note that we can straighten $c(z^0)$ because all vertices of every
ideal simplex in $c(z^0)$ are in $x_0$ except for the last vertex
with $c_i$ for some $i \in \{1,\ldots,s\}$. Furthermore,
$str(c(z^0))$ is in $\widehat{C}^{str,x_0,c}_d(M)$ and represents
$\Psi \circ EM^{-1}_d[M,\partial M]$. To prove the lemma, it is
sufficient to show that
$$algvol(str(c(z^0))) = \mathrm{Vol}(N).$$

Let $H_0, H_1:K\rightarrow
\mathrm{Dcone}(\cup_{i=1}^s\partial_iM\rightarrow M)$ be the
simplicial maps realising the cycles
$c(z^0)+\mathrm{Cone}_g(\partial z^0)$ and $str(c(z^0)+\mathrm{Cone}_g(\partial z^0))$, respectively. (See the construction in the
proof of \ref{homologyofhatiso}.) Let
$H:K\times\left[0,1\right]\rightarrow
\mathrm{Dcone}(\cup_{i=1}^s\partial_iM\rightarrow M)$ be the
straight line homotopy between $c(z^0)+\mathrm{Cone}_g(\partial
z^0)$ and $str(c(z^0)+\mathrm{Cone}_g(\partial z^0))$, such that
$H_0=H(.,0), H_1=H(.,1)$.

The homotopy $H$ yields a chain homotopy $L_* :
\widehat{C}^{str,x_0,c}_*(M)\rightarrow
\widehat{C}^{str,x_0,c}_{*+1}(M)$ from the straightening map $str$
to the identity. (See the construction in the proof of \ref{chainhomotopy}.) This satisfies $\partial L_k + L_{k-1} \partial =
str-id$. Then {\setlength\arraycolsep{2pt}
\begin{eqnarray*}
\lefteqn{str(c(z^0))-c(z^0)} \\
&= & str(z^0)-z^0+(-1)^{d+1}(\mathrm{Cone}_g(str(\partial z^0))-\mathrm{Cone}_g(\partial z^0)) \\
&=& \partial L_d(z^0)+L_{d-1}(\partial z^0) +(-1)^{d+1}\mathrm{Cone}_g(\partial L_{d-1}(\partial z^0) + L_{d-2}(\partial \partial z^0)) \\
&=& \partial L_d(z^0)+L_{d-1}(\partial z^0) +(-1)^{d+1}\partial \mathrm{Cone}_g(L_{d-1}(\partial z^0)) - L_{d-1}(\partial z^0) \\
&=& \partial (L_d(z^0)+(-1)^{d+1}\mathrm{Cone}_g(L_{d-1}(\partial z^0)))
\end{eqnarray*}}
In order to conclude $algvol(str(c(z^0))) = algvol(c(z^0))$ we want to apply Stokes' Theorem to show that the integral of the volume form over $\partial (L_d(z^0)+(-1)^{d+1}\mathrm{Cone}_g(L_{d-1}(\partial z^0)))$ vanishes. It is clear that the integral of the closed form $dvol$ over $\partial L_d(z^0)$ vanishes and thus it remains to look at simplices in $\partial \mathrm{Cone}_g(L_{d-1}(\partial z^0))$.

Since the volume form is defined on the complement of the cone points we can apply Stokes' Theorem to compact submanifolds with boundary of every (ideal) simplex in
Let ${\mathcal{H}}_{ik}$ be a sequence of horospheres converging
towards $c_i$ for $k\rightarrow\infty$. In the following we will
call simplices in $\mathrm{Cone}_g(\partial_iz)$ {\em proper
ideal} simplices if they have a vertex in $c_i$, i.e.\ if they are
not contained in $\partial_iz$. For a proper ideal simplex in
$\mathrm{Cone}_g(\partial_i z)$  its edges are either edges of a
simplex in $\partial_iz$ or otherwise they are geodesics ending in
$c_i$, which therefore are transverse to the horospheres
${\mathcal{H}}_{ik}$. Moreover all higher-dimensional proper ideal
simplices in $\mathrm{Cone}_g(\partial_i z^0)$ and their
straightenings are a union of geodesic lines ending in $c_i$. In
particular all proper ideal simplices occuring in
$\mathrm{Cone}_g(\partial_iz^0)$ and
$str(\mathrm{Cone}_g(\partial_iz^0))$ are transverse to the
${\mathcal{H}}_{ik}$'s.

The Relative Transversality Theorem (see \cite{gp}) yields that any map $H:K\times\left[0,1\right]\rightarrow \mathrm{Dcone}(\cup_{i=1}^s\partial_iM\rightarrow M)$ whose restriction to $K\times\left\{0,1\right\}$ is transverse to $\bigcup_{i.k}{\mathcal{H}}_{ik}$ can be homotoped (by an arbitrarily small homotopy, keeping $K\times\left\{0,1\right\}$ fixed) to a map which is transverse on all of $K\times\left[0,1\right]$. The homotopy can be chosen to fix subsimplices which are already transverse. Thus we can homotope (keeping $c(z^0)$ and $str(c(z^0))$ as well as $L_d(z^0)$ fixed) the simplices in $\mathrm{Cone}_g(L_{d-1}(\partial z^0))$
to be transverse to the ${\mathcal{H}}_{ik}$'s.

Then, for any $\left(d+1\right)$-dimensional simplex
$$\kappa:\Delta^{d+1}\rightarrow
\mathrm{Dcone}\left(\cup_{i=1}^s\partial_iM\rightarrow
M\right)\cong N\cup\left\{cusps\right\}$$ occuring in
$\mathrm{Cone}_g(L_{d-1}(\partial z^0))$, and for each
$k\in{\Bbb N}$, we conclude from transversality that
$\kappa^{-1}\left({\mathcal{H}}_{ik}\right)$ is a $d$-dimensional
submanifold $K_{ik}$ bounding a $\left(d+1\right)$-dimensional
submanifold $\Omega_{ik}\subset\Delta^{d+1}$ which does not
contain the preimage of $c_i$. Thus we can apply Stokes' Theorem
to $\Omega_{ik}$ and obtain
$$\int_{\partial\Delta^{d+1}\cap\Omega_{ik}}H^*dvol_X+\int_{K_{ik}}H^*dvol_X=\int_{\Omega_{ik}}dH^*dvol_X=0$$
because $H^*dvol_X$ is a closed form. We note that $H$ maps the
$d$-dimensional submanifold $K_{ik}$ to the
$\left(d-1\right)$-dimensional submanifold
${\mathcal{H}}_{ik}\subset N$ and therefore $
\int_{K_{ik}}H^*dvol_X=0$. Thus
$$\int_{\partial\Delta^{d+1}\cap\Omega_{ik}}H^*dvol_X=0$$ for all
$k\in{\Bbb N}$. Then, since the countable union $\cup_{k\in{\Bbb
N}}\partial\Delta^{d+1}\cap\Omega_{ik}$ equals
$\Delta^{d+1}\setminus \kappa^{-1}\left(c_i\right)$ and since
$dvol_X$ is defined to be zero on $c_i$, we conclude that
$$\int_{\partial\Delta^{d+1}}H^*dvol_X=0.$$ Summing up over all
$\kappa:\Delta^{d+1}\rightarrow
\mathrm{Dcone}\left(\cup_{i=1}^s\partial_iM\rightarrow M\right)$
occuring in $\mathrm{Cone}_g(L_{d-1}(\partial z^0))$ we
obtain
$$\int_{\partial \mathrm{Cone}_g(L_{d-1}\left(\partial z^0\right))}dvol_X=0.$$

Then,
we have
$$algvol(str(c(z^0)))=\int_{str(c(z^0))}dvol_X=\int_{c(z^0)}dvol_X= \mathrm{Vol}(N),$$
which completes the proof.
\end{proof}

Assume that $d$ is odd. Let $b_d$ be the Borel class in
$H^d_c(\mathrm{SL}(n,\mathbb{C}),\br)$. (It is obtained by restriction of the Borel class $b_d\in H^d_c(\mathrm{GL}(n,\mathbb{C}),\br)$ defined  in Section 2.2.)  According to the Van Est
isomorphism, a representative $\beta_d$ of $b_d$ is given by
$$\beta_d(g_0,\ldots,g_d)=\int_{str(g_0 \tilde{o},\ldots, g_d
\tilde{o})} dbol$$ where $dbol$ is an
$\mathrm{SL}(n,\mathbb{C})$-invariant differential $d$-form on
$\mathrm{SL}(n,\mathbb{C})/\mathrm{SU}(n)$ and $\tilde{o}$ is a
point in $\mathrm{SL}(n,\mathbb{C})/\mathrm{SU}(n)$. Recall that a comparison map $$comp :
C^*_c(\mathrm{SL}(n,\mathbb{C}),\br) \ra
C^*_{simp}(B\mathrm{SL}(n,\mathbb{C}),\br)$$ is defined by
$comp(f)(g_1,\ldots,g_k)=f(1, g_1, g_1 g_2,\ldots,g_1\cdots g_k)$.

As in \cite[Section 4.2.3]{Ku} define $B\mathrm{SL}(n,\mathbb{C})^\text{fb}_d$ as the set
consisting of $d$-simplices in
$B\mathrm{SL}(n,\mathbb{C})^{comp}_d$ which are either
$d$-simplices in $B\mathrm{SL}(n,\mathbb{C})_d$ or of the form
$(p_1,\ldots,p_{d-1},c) $ satisfying
$$\left|\int_{str(\tilde{o},p_1\tilde{o},\ldots, p_1\cdots
p_{d-1}\tilde{o},c)} dbol \right| < \infty.$$ We define
$B\mathrm{SL}(n,\mathbb{C})^\text{fb}$ to be the quasisimplicial
set generated by $B\mathrm{SL}(n,\mathbb{C})^\text{fb}_d$ under
face maps. Then consider a cocycle $\overline{c\beta}_d :
C^{simp}_d(B\mathrm{SL}(n,\mathbb{C})^\text{fb},\br)\ra \br$
defined by
$$\overline{c\beta}_d(g_1,\ldots,g_d)=\int_{str(\tilde{o},g_1 \tilde{o},\ldots, g_1\cdots g_{d-1} \tilde{o})}dbol$$
for $(g_1,\ldots,g_d)\in B\mathrm{SL}(n,\mathbb{C})^\text{fb}_d$,
and
$$\overline{c\beta}_d(p_1,\ldots,p_{d-1},c)=\int_{str( \tilde{o},p_1\tilde{o},\ldots, p_1\cdots p_{d-1}\tilde{o},c)}dbol$$
for $(p_1,\ldots,p_{d-1},c)\in
B\mathrm{SL}(n,\mathbb{C})^\text{fb}_d$. By the construction of
$\overline{c\beta}_d $, it is obvious that $(B_i)^*
(\overline{c\beta}_d)=comp(\beta_d)$ and hence, $(B_i)^*
(\overline{c\beta}_d)$ is a cocycle representing $comp(b_d)$ where
$B_i : B\mathrm{SL}(n,\mathbb{C}) \ra
B\mathrm{SL}(n,\mathbb{C})^\text{fb}$ is the natural inclusion
map.

\begin{lemma}\label{borelclass}
Let $\rho : (G,K) \ra (\mathrm{SL}(n,\mathbb{C}),\mathrm{SU}(n))$
be a representation. Let $j:\Gamma \ra G$ be the natural inclusion
map. Then we have
$$(B_\rho \circ B_j)_* (C^{simp}_d(B\Gamma^{comp})) \subset C^{simp}_d(B\mathrm{SL}(n,\mathbb{C})^\mathrm{fb}),$$
where $B_\rho : BG^{comp} \ra B\mathrm{SL}(n,\mathbb{C})^{comp}$
and $B_j : B\Gamma^{comp} \ra BG^{comp}$ are the induced maps from
$\rho$ and $j$ respectively. Furthermore, there exists a constant
$c_\rho \in \br$ such that $(B_\rho \circ B_j)^*
(\overline{c\beta}_d)$ represents $c_\rho \overline{v}_d$.
\end{lemma}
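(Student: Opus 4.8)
The plan is to verify the claimed inclusion simplex-by-simplex and then to derive the existence of $c_\rho$ from the fact that $\overline{v}_d$ generates $H^d_{simp}(B\Gamma^{comp},\br)$, which is one-dimensional by \hyperref[lem:EMmap]{Lemma \ref*{lem:EMmap}(c)} combined with \hyperref[homologyofhat]{Corollary \ref*{homologyofhat}}. First I would recall that a $d$-simplex of $B\Gamma^{comp}$ is either a tuple $(\gamma_1,\ldots,\gamma_d)$ with $\gamma_i\in\Gamma$ or a tuple $(p_1,\ldots,p_{d-1},c_i)$ with $p_j\in\Gamma_i$. For a simplex of the first type, $(B_\rho\circ B_j)$ sends it to $(\rho(\gamma_1),\ldots,\rho(\gamma_d))$, which lies in $B\mathrm{SL}(n,\bc)_d$ and hence in $B\mathrm{SL}(n,\bc)^\mathrm{fb}_d$ by definition; there is nothing to check. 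For a simplex of the second type, $(B_\rho\circ B_j)$ sends it to $(\rho(p_1),\ldots,\rho(p_{d-1}),c)$, and membership in $B\mathrm{SL}(n,\bc)^\mathrm{fb}_d$ requires the finiteness condition $\left|\int_{str(\tilde o,\rho(p_1)\tilde o,\ldots,\rho(p_1)\cdots\rho(p_{d-1})\tilde o,c)}dbol\right|<\infty$.

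The key step is therefore establishing this finiteness. The idea is that since $\rho:(G,K)\to(\mathrm{SL}(n,\bc),\mathrm{SU}(n))$ maps the point $x_0$ with stabilizer $K$ to a point $\tilde o$ with stabilizer $\mathrm{SU}(n)$, and maps $\Gamma_i$ into a subgroup fixing the image geodesic ray determined by $c_i$, the ideal straight simplex $str(\tilde o,\rho(p_1)\tilde o,\ldots,c)$ is an ideal straight simplex in $\mathrm{SL}(n,\bc)/\mathrm{SU}(n)$ with one vertex at an ideal point. I would then invoke \hyperref[lem:IdealStraight]{Lemma \ref*{lem:IdealStraight}} — or rather the same computation in horocyclic coordinates that proves it — to conclude that the integral of any $\mathrm{SL}(n,\bc)$-invariant $d$-form over such a simplex is finite: the exponential decay $e^{-2\|\rho_\mathbf{P}\|y}$ of the volume form in the direction of the cusp forces the integral along the ray to converge, exactly as in the estimate $\mathrm{Vol}(str(x_0,\ldots,x_{d-1},c))=\mathrm{Vol}(str(x_0,\ldots,x_{d-1}))\cdot\frac{1}{2\|\rho_\mathbf{P}\|}<\infty$. (Strictly, $dbol$ need not be a positive volume form, but $|dbol|$ is bounded by a constant times $dvol$ on compact sets along the ray, so the same comparison gives absolute convergence.) This yields the inclusion $(B_\rho\circ B_j)_*(C^{simp}_d(B\Gamma^{comp}))\subset C^{simp}_d(B\mathrm{SL}(n,\bc)^\mathrm{fb})$.

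For the second assertion, the pullback $(B_\rho\circ B_j)^*(\overline{c\beta}_d)$ is now a well-defined cochain on $C^{simp}_*(B\Gamma^{comp})$. Restricting along $B_i:B\mathrm{SL}(n,\bc)\to B\mathrm{SL}(n,\bc)^\mathrm{fb}$ and using $(B_i)^*(\overline{c\beta}_d)=comp(\beta_d)$ (recorded just before the lemma), together with the fact that the composition $B\Gamma\to B\Gamma^{comp}\xrightarrow{B_\rho\circ B_j}B\mathrm{SL}(n,\bc)^\mathrm{fb}$ factors the usual map $B\Gamma\to B\mathrm{SL}(n,\bc)$, one sees that on $C^{simp}_d(B\Gamma)$ the cochain $(B_\rho\circ B_j)^*(\overline{c\beta}_d)$ restricts to $(B_\rho\circ B_j)^*comp(\beta_d)$, a representative of $\rho^*comp(b_d)$. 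Since $\overline{c\beta}_d$ is a cocycle and the face maps are respected, $(B_\rho\circ B_j)^*(\overline{c\beta}_d)$ is a cocycle on $B\Gamma^{comp}$ and thus represents some class in $H^d_{simp}(B\Gamma^{comp},\br)$. By \hyperref[lem:EMmap]{Lemma \ref*{lem:EMmap}} this cohomology group is one-dimensional over $\br$, and by \hyperref[volume2]{Lemma \ref*{volume2}} it is generated by $\overline{v}_d$ (whose pairing with $EM_d^{-1}[M,\partial M]$ is $\mathrm{Vol}(N)\neq 0$). Hence there is a unique $c_\rho\in\br$ with $[(B_\rho\circ B_j)^*(\overline{c\beta}_d)]=c_\rho\,\overline{v}_d$. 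The main obstacle is the finiteness estimate: one must be careful that the image simplices genuinely have the cusp-transverse structure needed to run the horocyclic-coordinate argument, i.e.\ that $\rho$ sends the parabolic $\Gamma_i$ into the stabilizer of the ideal point $c$ so that the relevant edges are geodesics converging to $c$; this is where the hypothesis $\rho:(G,K)\to(\mathrm{SL}(n,\bc),\mathrm{SU}(n))$ being a Lie group homomorphism (not merely a representation of $\Gamma$) is used.
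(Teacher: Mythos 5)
Your overall architecture (check the inclusion simplex-by-simplex, then get $c_\rho$ from one-dimensionality of $H^d_{simp}(B\Gamma^{comp},\br)$) is reasonable, and the reduction to the single finiteness condition for simplices of the form $(p_1,\ldots,p_{d-1},c_i)$ is exactly right. But the way you propose to prove that finiteness has a genuine gap. You want to run ``the same computation in horocyclic coordinates'' as in Lemma \ref{lem:IdealStraight}, but now inside $Y=\mathrm{SL}(n,\bc)/\mathrm{SU}(n)$. That computation is specific to the situation of Lemma \ref{lem:IdealStraight}: there the ambient space has dimension $d$, the form being integrated is the Riemannian volume form, whose expression $h(x,z)e^{-2\|\rho_{\mathbf P}\|y}\,dx\,dy\,dz$ in \emph{rational} horocyclic coordinates supplies the exponential decay, and the Gram-determinant inequality uses that one integrates the top-dimensional volume form. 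In $Y$ none of this is available: $\dim Y$ is in general much larger than $d$, $dbol$ is an arbitrary invariant $d$-form rather than a volume form, and $c=\rho_\infty(c_i)$ is just some point of the boundary of a higher-rank symmetric space, so one would have to control $dbol$ evaluated on Jacobi fields along geodesics asymptotic to $c$ --- which may grow in flat directions. Your parenthetical remark that ``$|dbol|$ is bounded by a constant times $dvol$ on compact sets along the ray'' does not address this: the issue is the behaviour at infinity, not on compact sets, and a $d$-form on $Y$ cannot be compared with a top-degree volume form there.

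The step you are missing is to pull everything back to $X$ rather than push the estimate forward to $Y$. The homomorphism $\rho:(G,K)\to(\mathrm{SL}(n,\bc),\mathrm{SU}(n))$ induces a totally geodesic $\rho$-equivariant map $S_\rho:X\to Y$ with boundary extension $\rho_\infty$, and since $S_\rho$ sends geodesics to geodesics one has $S_\rho\bigl(str(\tilde x_0,p_1\tilde x_0,\ldots,c_i)\bigr)=str(\tilde o,\rho(p_1)\tilde o,\ldots,\rho_\infty(c_i))$. Now $S_\rho^*dbol$ is a $G$-invariant $d$-form on the $d$-dimensional space $X$, hence equals $c_\rho\,dvol_X$ for a constant $c_\rho$. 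Therefore $\int_{str(\tilde o,\ldots,\rho_\infty(c_i))}dbol=c_\rho\int_{str(\tilde x_0,\ldots,c_i)}dvol_X$, and Lemma \ref{lem:IdealStraight} applies verbatim. This one observation yields both the finiteness and, at the cochain level, the identity $(B_\rho\circ B_j)^*(\overline{c\beta}_d)=c_\rho\,\overline{c\nu}_d$, so the second assertion follows immediately with an explicit constant. Your alternative derivation of $c_\rho$ from the one-dimensionality of $H^d_{simp}(B\Gamma^{comp},\br)$ is correct as far as it goes, but it only produces \emph{some} constant and in any case cannot be started until the finiteness (hence well-definedness of the pullback cochain) has been established.
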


\begin{proof}
To prove the first statement, it suffices to show that for a
$d$-simplex of the form $(p_1,\ldots,p_{d-1},c_i)$ with
$p_1,\ldots,p_{d-1}\in \Gamma_i$,
$$(B_\rho \circ B_j)_* (p_1,\ldots,p_{d-1},c_i) \subset C^{simp}_d(B\mathrm{SL}(n,\mathbb{C})^\text{fb}).$$

The homomorphism $\rho$ induces a $\rho$-equivariant map $S_\rho :
X \ra Y$ and $\rho_\infty : \partial_\infty X \ra \partial_\infty
Y$ where $X=G/K$ and $Y=\mathrm{SL}(n,\mathbb{C})/\mathrm{SU}(n)$.
Since $S_\rho^*dbol$ is a $G$-invariant $d$-form on $X$ and the
space of $G$-invariant differential $d$-forms on $X$ is generated
by the $G$-invariant Riemannian volume form $dvol_X$, there is a
constant $c_\rho \in \br$ such that $S_\rho^*dbol=c_\rho dvol_X$.
Noting that $S_\rho$ maps geodesics to geodesics, we have
\begin{eqnarray*}
\lefteqn{S_\rho(str(\tilde{x}_0, p_1\tilde{x}_0,\ldots, p_1\cdots p_{d-1}\tilde{x}_0,c_i))} \\
&=&  str(\tilde{o}, \rho(p_1)\tilde{o},\ldots, \rho(p_1)\cdots \rho(p_{d-1})\tilde{o},\rho_\infty (c_i))
\end{eqnarray*}
where $\tilde{x}_0$, $\tilde{o}$ are points in $X$ and $Y$ whose
stabilizers are $K$ and $\mathrm{SU}(n)$ respectively. For
$(g_1,\ldots,g_{k-1},c) \in BG^{comp}$, observe that the induced
map $B_\rho : BG^{comp} \ra B\mathrm{SL}(n,\mathbb{C})^{comp}$ is
defined by
$$B_\rho(g_1,\ldots,g_{k-1},c)=(\rho(g_1),\ldots,\rho(g_{k-1}),\rho_\infty
(c)).$$

From the above observations, we have
{\setlength\arraycolsep{2pt}
\begin{eqnarray*}
\int_{str(\tilde{o}, \rho(p_1)\tilde{o},\ldots, \rho(p_1)\cdots \rho(p_{d-1})\tilde{o},\rho_\infty (c_i))}dbol
&=& \int_{str(\tilde{x}_0, p_1\tilde{x}_0,\ldots, p_1\cdots p_{d-1}\tilde{x}_0,c_i)} (S_\rho)^*dbol \\
&=& \int_{str(\tilde{x}_0, p_1\tilde{x}_0,\ldots, p_1\cdots p_{d-1}\tilde{x}_0,c_i)} c_\rho dvol_X \\
&<& \infty.
\end{eqnarray*}}
Furthermore, the above equation implies
$$(B_\rho \circ B_j)^* (\overline{c\beta}_d)(p_1,\ldots,p_{d-1},c_i)=c_\rho \overline{c\nu}_d (p_1,\ldots,p_{d-1},c_i).$$
Therefore, this implies the second statement.
\end{proof}

\section{Invariants in group homology and K-theory}

In this section, we construct $\gamma(N)\in K_d(\overline{\bq})
\otimes \bq$ for a $\bq$-rank $1$ locally symmetric space
$N=\Gamma \backslash G/K$.

\begin{prop}\label{prop:preimage}
Let $\Gamma \subset \mathbf{G}(\overline{\bq})$ be a $\bq$-rank $1$ lattice. Let $N=\Gamma\backslash G/K$ be the interior of the manifold with boundary $M$. Let $\rho
: \mathbf{G}(\overline{\bq}) \ra \mathrm{SL}(n,\overline{\bq})$ be
a representation. Suppose that $\rho(\Gamma_i)$ is unipotent for
all $i\in \{1,\ldots,s\}$. Then $$ (B_\rho \circ B_j)_*
EM_d^{-1}[M,\partial M] \in
H^{simp}_d(B\mathrm{SL}(n,\overline{\bq}))^\mathrm{fb},\bq)$$ has
a preimage $$\overline{\gamma}(N) \in
H^{simp}_d(B\mathrm{SL}(n,\overline{\bq}),\bq),$$ where $j :
\Gamma \ra \mathbf{G}(\overline{\bq})$ is the natural inclusion
map.
\end{prop}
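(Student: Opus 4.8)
The plan is to write down the class explicitly, push it forward, and then kill the cuspidal part by a second coning operation, this time onto an honest $\overline{\bq}$‑rational point of $Y=\mathrm{SL}(n,\bc)/\mathrm{SU}(n)$.

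First I would unwind the class. By \hyperref[lem:EMmap]{Lemma \ref*{lem:EMmap}} together with the cycle $c(z^0)$ built in the proof of \hyperref[volume2]{Lemma \ref*{volume2}}, $EM_d^{-1}[M,\partial M]$ is represented by a cycle $z=z_{\mathrm{int}}+\sum_{i=1}^s\mathrm{Cone}_{c_i}(w_i)$ in $C^{simp}_d(B\Gamma^{comp})$, with $z_{\mathrm{int}}\in C_d(B\Gamma)$, $w_i\in C_{d-1}(B\Gamma_i)$, where $\mathrm{Cone}_{c_i}$ appends the cone point $c_i$. Separating cuspidal and interior parts of $\partial z=0$ gives $\partial w_i=0$ and $\partial z_{\mathrm{int}}=(-1)^{d+1}\sum_i w_i$ in $C_{d-1}(B\Gamma)$ (via $\Gamma_i\hookrightarrow\Gamma$); write $w_i=\sum_k b_k\,(p^{i,k}_1,\dots,p^{i,k}_{d-1})$ with $p^{i,k}_l\in\Gamma_i\subset\mathbf G(\bq)$. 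Applying $B_\rho\circ B_j$ and using \hyperref[borelclass]{Lemma \ref*{borelclass}}, the image $\alpha:=(B_\rho\circ B_j)_*EM_d^{-1}[M,\partial M]$ is represented in $C_d^{simp}(B\mathrm{SL}(n,\overline{\bq})^{\mathrm{fb}})$ by $(\rho j)_*z_{\mathrm{int}}+\sum_{i=1}^s\mathrm{Cone}_{\rho_\infty(c_i)}\big((\rho j)_*w_i\big)$. Thus it suffices to produce, for each $i$, an interior chain $W_i\in C_d(B\mathrm{SL}(n,\overline{\bq}))$ with $\partial W_i=(-1)^d(\rho j)_*w_i$ and a chain $V_i\in C_{d+1}(B\mathrm{SL}(n,\overline{\bq})^{\mathrm{fb}})$ with $\partial V_i=\mathrm{Cone}_{\rho_\infty(c_i)}\big((\rho j)_*w_i\big)-W_i$: then
$$\overline{\gamma}(N):=\Big[\,(\rho j)_*z_{\mathrm{int}}+\textstyle\sum_{i=1}^s W_i\,\Big]\in H^{simp}_d(B\mathrm{SL}(n,\overline{\bq}),\bq)$$
is a cycle (its boundary is $(-1)^{d+1}\sum_i(\rho j)_*w_i+(-1)^d\sum_i(\rho j)_*w_i=0$) and maps to $\alpha$, since under $B_i$ it differs from the representative above by $-\partial\sum_i V_i$.

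The $W_i$ and $V_i$ I would construct by a double cone. Fix $i$, put $c:=\rho_\infty(c_i)$, and note that $(\rho j)_*w_i$ is a cycle whose group entries lie in $U_i:=\rho(\Gamma_i)$; since $U_i$ is unipotent it is torsion free, so either $U_i=1$ (then $(\rho j)_*w_i=0$ and there is nothing to do for this cusp) or $U_i$ is infinite. In the latter case choose $p'_0\in\Gamma_i$ so that $p:=\rho(p'_0)\tilde o\in Y$ is distinct from the finitely many vertices $\rho(p^{i,k}_1\cdots p^{i,k}_l)\tilde o$ occurring in $(\rho j)_*w_i$. The point $p$ is $\overline{\bq}$‑rational and also lies in $\tilde\rho(G)\cdot\tilde o$ (with $\tilde\rho\colon G\to\mathrm{SL}(n,\bc)$ the complexification of $\rho$), since $p'_0\in\Gamma_i\subset G$. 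Then $W_i:=\mathrm{Cone}_p\big((\rho j)_*w_i\big)$ is a bar $d$‑chain with group entries still in $U_i\subset\mathrm{SL}(n,\overline{\bq})$ and $\partial W_i=(-1)^d(\rho j)_*w_i$; and the double cone $V_i:=(-1)^d\,\mathrm{DCone}_{p,c}\big((\rho j)_*w_i\big)$ satisfies $\partial V_i=\mathrm{Cone}_{c}\big((\rho j)_*w_i\big)-W_i$, by the usual cone/double-cone boundary identities (the lower-dimensional error terms assemble to $\mathrm{DCone}_{p,c}\big(\partial(\rho j)_*w_i\big)=0$).

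It then remains to check $V_i\in C_{d+1}(B\mathrm{SL}(n,\overline{\bq})^{\mathrm{fb}})$, i.e.\ that every cuspidal simplex occurring in $V_i$ — and every $d$‑dimensional cuspidal face thereof — has finite Borel volume. Each such simplex has $\tilde o$, the points $\rho(p^{i,k}_1\cdots p^{i,k}_l)\tilde o$ and $p=\rho(p'_0)\tilde o$ as honest vertices and $c$ as its ideal vertex; because all of these honest vertices lie in $\tilde\rho(G)\cdot\tilde o$, the simplex is the $S_\rho$‑image of an ideal straight simplex in $X$ with ideal vertex $c_i$, so by $S_\rho^*\mathrm{dbol}=c_\rho\,\mathrm{dvol}_X$ and \hyperref[lem:IdealStraight]{Lemma \ref*{lem:IdealStraight}} its $\mathrm{dbol}$‑integral (and that of each of its $d$‑dimensional cuspidal faces) equals $c_\rho$ times a finite $\mathrm{dvol}_X$‑volume. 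This completes the construction, and hence the proof.

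The step I expect to be the real obstacle is precisely this last finite–Borel–volume bookkeeping: one must ensure that the $(d+1)$‑dimensional cuspidal fillings never leave $B\mathrm{SL}(n,\overline{\bq})^{\mathrm{fb}}$, which is exactly what forces the auxiliary apex to be $\rho$ of an element of $\Gamma_i$ (so that \emph{all} honest vertices are $S_\rho$‑images of points of $X$). I also expect the unipotency hypothesis to enter more essentially than this sketch makes visible — most plausibly through the precise definition of $B\mathrm{SL}(n,\overline{\bq})^{\mathrm{fb}}$ and of which cuspidal simplices it is allowed to contain — so the honest version of Step 3 may require a finer analysis (or, alternatively, replacing the explicit double cone by the observation that $\rho(\Gamma_i)$ unipotent forces $[(\rho j)_*w_i]=0$ in $H_{d-1}(B\mathrm{SL}(n,\overline{\bq}),\bq)$ via a contracting one‑parameter subgroup fixing $c$).
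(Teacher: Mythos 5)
Your opening reduction is sound and matches the shape of the argument the paper is invoking: represent the class by $z_{\mathrm{int}}+\sum_i\mathrm{Cone}_{c_i}(w_i)$ with $w_i$ a cycle in $C_{d-1}(B\Gamma_i)$ and $\partial z_{\mathrm{int}}=\pm\sum_i w_i$, push forward, and reduce to replacing each $\mathrm{Cone}_{\rho_\infty(c_i)}\big((\rho j)_*w_i\big)$ by an interior chain with the same boundary, the replacement being controlled inside $B\mathrm{SL}(n,\overline{\bq})^{\mathrm{fb}}$. (The paper itself gives no details and defers entirely to \cite[Proposition 1]{Ku}.) The fatal gap is in the construction of $W_i$ and $V_i$. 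The identity $\partial\,\mathrm{Cone}_p(w)=\mathrm{Cone}_p(\partial w)+(-1)^{\dim w+1}w$ that you use is \emph{false} in $C_*^{simp}(BG)$ when the apex $p$ is not fixed by the group elements occurring in $w$: the bar complex is the homogeneous complex modulo the left $G$-action, and the face of $(\tilde o,q_1\tilde o,\dots,q_1\cdots q_{d-1}\tilde o,p)$ obtained by deleting the vertex $\tilde o$ is, after translating back to the base point, the cone of $(q_2,\dots,q_{d-1})$ at $q_1^{-1}p$ rather than at $p$. Hence the terms that cancel in $\partial w_i'$ reappear with different, translated apexes and do not cancel in $\partial W_i$. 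If your identity held, one could cone any cycle at any point and conclude $H_k(BG,\bq)=0$ for every group $G$ and every $k\ge 1$; already for $G=\bz$ and the $1$-cycle $(1)$ one computes $\partial(1,n-1)=(n-1)-(n)+(1)\ne\pm(1)$. The cone at the ideal apex $c_i$ works precisely because $c_i$ is fixed by $\Gamma_i$, and an infinite unipotent group has no fixed point in the symmetric space, so no interior apex can play that role. The same defect breaks the boundary formula for your double cone $V_i$.

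This obstruction is exactly what the unipotency hypothesis is there to overcome, and your closing parenthetical names the correct mechanism, which is the one used in \cite[Proposition 1]{Ku} (following Goncharov): a semisimple element of $\mathrm{SL}(n,\overline{\bq})$ normalizing $\rho(\Gamma_i)$ and fixing $c_i$ acts on $H_{d-1}(\rho(\Gamma_i),\bq)$ with eigenvalues $\ne 1$ (weights of the contracting torus on the homology of a unipotent group), while acting as an inner, hence homologically trivial, automorphism of $\mathrm{SL}(n,\overline{\bq})$; this forces $[(\rho j)_*w_i]$ to die in $H_{d-1}(B\mathrm{SL}(n,\overline{\bq}),\bq)$ and, carried out at the chain level, produces a filling that stays among finite-volume simplices. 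In your write-up that idea appears only as a hedge, and even as stated it is not quite sufficient: knowing that $(\rho j)_*w_i$ bounds some $W_i$ in $C_d(B\mathrm{SL}(n,\overline{\bq}),\bq)$ still leaves you to show that the $d$-cycle $\mathrm{Cone}_{\rho_\infty(c_i)}\big((\rho j)_*w_i\big)-W_i$ bounds inside $B\mathrm{SL}(n,\overline{\bq})^{\mathrm{fb}}$, which requires either an explicit equivariant filling with entries in the unipotent group or an analysis of the pair $\big(B\mathrm{SL}(n,\overline{\bq})^{\mathrm{fb}},B\mathrm{SL}(n,\overline{\bq})\big)$. So the proposal as written does not prove the proposition.
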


\begin{proof}
The exact same argument in the proof of \cite[Proposition 1]{Ku}
works under the assumption that every $\rho(\Gamma_i)$ is
unipotent. For the detailed proof, see \cite[Proposition 1]{Ku}.
\end{proof}

Let $A$ be a subring with unit of the ring of complex numbers. One
can regard an element in $H^{simp}_*(B\mathrm{SL}(A),\bq)$ as an
element in $H_*(|B\mathrm{SL}(A)|^+,\bq)$ due to the canonical
identifications $$H^{simp}_*(B\mathrm{SL}(A),\bq) \cong
H_*(|B\mathrm{SL}(A)|,\bq) \cong H_*(|B\mathrm{SL}(A)|^+,\bq).$$
By the Milnor-Moore theorem, the Hurewicz homomorphism
$$K_*(A)=\pi_*(|B\mathrm{GL}(A)|^+)\ra H_*(|B\mathrm{GL}(A)|^+,\bz)$$ gives,
after tensoring with $\bq$, an injective homomorphism
$$I_* : K_*(A)\otimes \bq=\pi_*(|B\mathrm{GL}(A)|^+)\otimes \bq\ra
H_*(|B\mathrm{GL}(A)|^+,\bq).$$ Its image consists of primitive
elements, denoted by $PH_*(|B\mathrm{GL}(A)|^+,\bq)$.

By Quillen, inclusion $|B\mathrm{GL}(A)| \subset
|B\mathrm{GL}(A)|^+$ induces an isomorphism
$$Q_* : PH_*(|B\mathrm{GL}(A)|,\bq) \ra PH_*(|B\mathrm{GL}(A)|^+,\bq) \cong K_*(A) \otimes \bq.$$
Once the projection
$$pr_* : H_*^{simp}(B\mathrm{GL}(A),\bq)\ra PH_*^{simp}(B\mathrm{GL}(A),\bq) \cong PH_*(|B\mathrm{GL}(A)|,\bq)$$ is
fixed, we can define an element $I_*^{-1} \circ Q_* \circ pr_*
(\alpha) \in K_*(A)\otimes \bq$ for each $\alpha \in
H_*^{simp}(B\mathrm{GL}(A),\bq)$. For $h \in K_{2m-1}(A)\otimes
\bq$, define
$$\langle b_{2m-1}, h \rangle= \langle comp(b_{2m-1}),
Q^{-1}_{2m-1} \circ I_{2m-1}(h) \rangle.$$ In particular, when
$A=\overline{\bq}$, note that by \cite[Corollary 2]{Ku} the projection $pr_*$ can be chosen such that $$\langle comp(b_{2m-1}),
pr_{2m-1}(\alpha)\rangle=\langle comp(b_{2m-1}),\alpha \rangle$$
for all $m \in \mathbb{N}$. We refer the reader to \cite[Section
2.5]{Ku} for more details about this.

\begin{thm}\label{thm:Ktheory}
Let $G/K$ be a symmetric space of noncompact type with odd
dimension $d$ and $N=\Gamma \backslash G/K$ be a $\bq$-rank $1$
locally symmetric space. Let $\rho : G \ra
\mathrm{GL}(n,\mathbb{C})$ be a representation with $\rho^*_c b_d
\neq 0$. Suppose that $\rho(\Gamma_i)$ is unipotent for all $i\in
\{1,\ldots,s\}$. Then there is an element $$\gamma(N) \in
K_d(\overline{\bq}) \otimes \bq$$ such that the application of the
Borel class $b_d$ yields
$$\langle b_d, \gamma(N) \rangle = c_\rho \mathrm{Vol}(N)$$
for some constant $c_\rho \neq 0$.
\end{thm}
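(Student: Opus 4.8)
The strategy is to push the group-homology class $\overline{\gamma}(N)$ supplied by \hyperref[prop:preimage]{Proposition \ref*{prop:preimage}} into $K_d(\overline{\bq})\otimes\bq$ through the Quillen and Milnor--Moore machinery recalled just before \hyperref[thm:Ktheory]{Theorem \ref*{thm:Ktheory}}, and then to evaluate the Borel class by unwinding the definitions down to \hyperref[borelclass]{Lemma \ref*{borelclass}} and \hyperref[volume2]{Lemma \ref*{volume2}}. Write $d=2m-1$. Since $G$ is semisimple and centerfree, $\det\circ\rho:G\ra\bc^*$ is trivial, so $\rho$ takes values in $\mathrm{SL}(n,\bc)$; as $\Gamma$ is arithmetic we may also assume, after conjugation, that $\rho|_\Gamma$ and the induced boundary map are defined over $\overline{\bq}$, so that \hyperref[prop:preimage]{Proposition \ref*{prop:preimage}} applies and yields a class
$$\overline{\gamma}(N)\in H^{simp}_d(B\mathrm{SL}(n,\overline{\bq}),\bq)$$
whose image under the natural map $H^{simp}_d(B\mathrm{SL}(n,\overline{\bq}),\bq)\ra H^{simp}_d(B\mathrm{SL}(n,\overline{\bq})^{\mathrm{fb}},\bq)$ is $(B_\rho\circ B_j)_*\,EM_d^{-1}[M,\partial M]$. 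Feeding $\overline{\gamma}(N)$ through the stabilization $H^{simp}_d(B\mathrm{SL}(n,\overline{\bq}),\bq)\ra H^{simp}_d(B\mathrm{GL}(\overline{\bq}),\bq)$, the fixed projection $pr_d$ onto the primitive part, the Quillen isomorphism $Q_d$, and the inverse of the injective Hurewicz map $I_d$, I would set
$$\gamma(N):=I_d^{-1}\!\left(Q_d\bigl(pr_d(\overline{\gamma}(N))\bigr)\right)\in K_d(\overline{\bq})\otimes\bq,$$
which makes sense because $pr_d(\overline{\gamma}(N))$ is primitive, $Q_d$ is an isomorphism onto the primitive part of $H_d(|B\mathrm{GL}(\overline{\bq})|^+,\bq)$, and that primitive part is exactly the image of $I_d$.

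Next I would compute the pairing by a chain of identities. By the definition of $\langle b_d,-\rangle$ and of $\gamma(N)$,
$$\langle b_d,\gamma(N)\rangle=\langle comp(b_d),Q_d^{-1}I_d(\gamma(N))\rangle=\langle comp(b_d),pr_d(\overline{\gamma}(N))\rangle,$$
and the compatibility of $pr_d$ with the Borel class over $\overline{\bq}$ (\cite[Corollary 2]{Ku}), together with the fact that $comp(b_d)$ on $B\mathrm{GL}(\overline{\bq})$ restricts to $comp(b_d)$ on $B\mathrm{SL}(n,\overline{\bq})$, gives $\langle comp(b_d),pr_d(\overline{\gamma}(N))\rangle=\langle comp(b_d),\overline{\gamma}(N)\rangle$. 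Since $\overline{c\beta}_d$ pulls back to a cocycle representing $comp(b_d)$ under $B\mathrm{SL}(n,\overline{\bq})\ra B\mathrm{SL}(n,\overline{\bq})^{\mathrm{fb}}$, and since $\overline{\gamma}(N)$ maps to $(B_\rho\circ B_j)_*EM_d^{-1}[M,\partial M]$ under that same map,
$$\langle comp(b_d),\overline{\gamma}(N)\rangle=\langle \overline{c\beta}_d,(B_\rho\circ B_j)_*EM_d^{-1}[M,\partial M]\rangle=\langle (B_\rho\circ B_j)^*\overline{c\beta}_d,EM_d^{-1}[M,\partial M]\rangle.$$
By \hyperref[borelclass]{Lemma \ref*{borelclass}} the cocycle $(B_\rho\circ B_j)^*\overline{c\beta}_d$ represents $c_\rho\,\overline{v}_d$, so the last expression equals $c_\rho\langle\overline{v}_d,EM_d^{-1}[M,\partial M]\rangle$, which by \hyperref[volume2]{Lemma \ref*{volume2}} is $c_\rho\,\mathrm{Vol}(N)$. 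This proves $\langle b_d,\gamma(N)\rangle=c_\rho\,\mathrm{Vol}(N)$.

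It remains to see that $c_\rho\neq0$ under the hypothesis $\rho_c^*b_d\neq0$. For this I would observe that the constant $c_\rho$ of \hyperref[borelclass]{Lemma \ref*{borelclass}}, defined by $S_\rho^*dbol=c_\rho\,dvol_X$, is precisely the constant that measures $\rho_c^*b_d$: under the Van Est isomorphism $b_d$ is represented by the invariant form $dbol$, so $\rho_c^*b_d$ is represented by $S_\rho^*dbol=c_\rho\,dvol_X$, i.e.\ $\rho_c^*b_d=c_\rho\,v_d$ in $H^d_c(G,\br)\cong\br$. Hence $\rho_c^*b_d\neq0$ is literally the assertion $c_\rho\neq0$, and, as in \hyperref[evaluation]{Proposition \ref*{evaluation}}, $c_\rho$ depends only on $\rho$.

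I expect the genuinely substantial content to lie in the inputs already granted rather than in this final bookkeeping: that $(B_\rho\circ B_j)_*EM_d^{-1}[M,\partial M]$ lands in the finite-volume subcomplex homology and admits a preimage in honest $\mathrm{SL}(n,\overline{\bq})$-homology is exactly where the unipotency of the $\rho(\Gamma_i)$ enters, through \hyperref[prop:preimage]{Proposition \ref*{prop:preimage}}, and the existence of a projection $pr_*$ onto primitives compatible with $comp(b_d)$ is \cite[Corollary 2]{Ku}. Granting these, the theorem is a formal diagram chase through the definitions of the pairing $\langle b_d,-\rangle$ and of the cocycle $\overline{c\beta}_d$, exactly as above; the one point deserving care when writing it out is verifying that all the maps involved are compatible with the stabilization $n\to\infty$ in $\mathrm{SL}$ and in $\mathrm{GL}$, and with the inclusion $\mathrm{SL}(n,\overline{\bq})\subset\mathrm{SL}(n,\bc)^\delta$ used to make sense of $\langle comp(b_d),-\rangle$.
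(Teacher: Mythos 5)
Your proposal is correct and follows essentially the same route as the paper: invoke Proposition \ref{prop:preimage} (after arranging, via arithmeticity/Weil rigidity and the classification of representations, that $\Gamma$ and $\rho$ are defined over $\overline{\bq}$ and $\rho$ lands in $\mathrm{SL}$), define $\gamma(N)=I_d^{-1}\circ Q_d\circ pr_d$ of the stabilized class, and evaluate $\langle b_d,\cdot\rangle$ by the same chain of adjunctions through $\overline{c\beta}_d$, Lemma \ref{borelclass} and Lemma \ref{volume2}. Your explicit identification of the $c_\rho$ from Lemma \ref{borelclass} with the constant in $\rho_c^*b_d=c_\rho v_d$, so that the hypothesis $\rho_c^*b_d\neq 0$ is literally $c_\rho\neq 0$, is a point the paper leaves implicit but is entirely consistent with it.
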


\begin{proof}
First, note that connected,
semisimple Lie groups are perfect, hence each representation $\rho:G \ra \mathrm{GL}(n,\mathbb{C})$ has image in
$\mathrm{SL}(n,\mathbb{C})$. In addition, we can assume that $\rho$ maps $K$
to $\mathrm{SU}(n)$, which can be achieved upon conjugation.

Once $G$ is identified with $\mathbf G (\br)^0$ via an
isomorphism, by Weil's rigidity theorem, there is a $g\in G$ such
that $g\Gamma g^{-1} \subset \mathbf{G}(\overline{\bq})^0$.
Moreover, it follows from the classification of irreducible
representations of Lie groups \cite{FH} that each representation
$\rho : \mathbf G(\br)^0 \rightarrow \mathrm{GL}(n,\mathbb{C})$ is
isomorphic to a representation $\rho' : \mathbf G(\br)^0
\rightarrow \mathrm{GL}(n,\mathbb{C})$ such that
$\mathbf{G}(\overline{\bq})^0$ is mapped to
$\mathrm{GL}(n,\overline{\bq})$. Thus, we can assume that $\Gamma
\subset \mathbf G (\overline \bq)^0$ and $\rho|_{\mathbf G
(\overline \bq)^0} : \mathbf G (\overline \bq)^0 \ra
\mathrm{SL}(n,\overline \bq)$ is a representation for which every
$\rho(\Gamma_i)$ is unipotent.

According to Proposition \ref{prop:preimage}, we obtain
$\overline{\gamma}(N)\in
H^{simp}_d(B\mathrm{SL}(n,\overline{\bq}),\bq)$. Let us denote by
$\overline{\overline{\gamma}}(N)$ the image of
$\overline{\gamma}(N)$ in
$H^{simp}_d(B\mathrm{GL}(n,\overline{\bq}),\bq)$. Now, we define
$$\gamma(N)=I_d^{-1} \circ Q_d \circ pr_d(\overline{\overline{\gamma}}(N)) \in K_d(\overline{\bq})\otimes \bq.$$
Then, we have
{\setlength\arraycolsep{2pt}
\begin{eqnarray*}
\langle b_d, \gamma(N) \rangle &=& \langle comp(b_d), pr_d(\overline{\overline{\gamma}}(N)) \rangle \\
&=& \langle comp(b_d), \overline{\overline{\gamma}}(N) \rangle = \langle [comp(\beta_d)], \overline{\gamma}(N) \rangle \\
&=& \langle (B_i)^*[\overline{c\beta}_d], \overline{\gamma}(N) \rangle =\langle  [\overline{c\beta}_d], (B_i)_*\overline{\gamma}(N) \rangle \\
&=& \langle [\overline{c\beta}_d], (B_\rho \circ B_j)_*EM_d^{-1}[M,\partial M] \rangle \\
&=& \langle [(B_\rho \circ B_j)^*\overline{c\beta}_d], EM_d^{-1}[M,\partial M] \rangle \\
&=& \langle c_\rho \overline{v}_d, EM_d^{-1}[M,\partial M] \rangle = c_\rho \mathrm{Vol}(N),
\end{eqnarray*}}
where $B_i : B\mathrm{SL}(n,\mathbb{C}) \ra
B\mathrm{SL}(n,\mathbb{C})^\mathrm{fb}$ is the natural inclusion
map. Therefore, we complete the proof.
\end{proof}

Due to $\langle b_d, \gamma(N) \rangle = c_\rho
\mathrm{Vol}(N)\neq 0$, it can be checked that $\gamma(N)$ is a
nontrivial element in $K_d(\overline{\bq})\otimes \bq$ if $\rho^*b_d\not=0$. Kuessner
\cite[Theorem 3]{Ku} characterizes a complete list of irreducible symmetric
spaces $G/K$ of noncompact type and fundamental representation
$\rho : G \ra \mathrm{GL}(n,\mathbb{C})$ with $\rho^*b_{2m-1}\neq
0$ for $d=2m-1=\dim (G/K)$. In the noncompact case, he only gets
invariants for hyperbolic manifolds since there exist no such
fundamental representations for the other $\br$-rank $1$
semisimple Lie groups. However, in the list, there are a lot of
fundamental representations for higher rank semisimple Lie groups.
Theorem \ref{thm:Ktheory} enables us to get invariants for
$\bq$-rank $1$ locally symmetric spaces, including hyperbolic
manifolds, by using the fundamental representations in the list.

\section{Relation to classical Bloch group}
In \cite{NY}, Neumann-Yang constructed an invariant of finite
volume hyperbolic 3-manifolds which lie in the Bloch group $\cal
B(\bc)$. In this section, we give an invariant of a $\bq$-rank $1$
locally symmetric space, which coincides with the classical Bloch
invariant of cusped hyperbolic $3$-manifolds.

Let $X$ be a symmetric space of noncompact type and
$C_k(\partial_\infty X)$ the free abelian group generated by
$(k+1)$-tuples of points of $\partial_\infty X$ modulo the
relations
\begin{enumerate}
\item
    $(\theta_0,\ldots,\theta_k)=sign(\tau)(\theta_{\tau(0)},\ldots,\theta_{\tau(k)})$
    for any permutation $\tau$ and
\item $(\theta_0,\ldots,\theta_k)=0$ whenever $\theta_i=\theta_j$ for some $i\neq j$
\item $\partial(\theta_0,\ldots,\theta_k)=\sum_{i=0}^k
(-1)^i(\theta_0,\ldots,\hat{\theta}_i,\ldots,\theta_k)$
\end{enumerate}
The \emph{generalized pre-Bloch group of $X$} is
$$\cal P_*(X)=H_*(C_*(\partial_\infty X)\otimes_{\bz G} \bz,
\partial\otimes_{\bz G} id)$$ and the \emph{generalized pre-Bloch group of
$\bc$} as $$\cal P_*^n(\bc)=\cal P_*(\mathrm{SL}(n,\bc)/\mathrm{SU}(n)).$$  Note that $\cal
P_3(H^3_\br)=\cal P_3^2(\bc)$ is the classical pre-Bloch group $\cal P(\bc)$.

\begin{remark} One may wonder what happens if Conditon (1) is omitted. Let $\widehat{C}_*(\partial_\infty X)$ be the chain complex analogously defined without condition (1) and $\pi:\widehat{C}_*(\partial_\infty X)\rightarrow C_*(\partial_\infty X)$ the projection, then (because of condition (ii)) each element of $C_n(\partial_\infty X)$ has $(n+1)!$ preimages and we may define a right-inverse to the projection $\pi$ by sending each $c\in C_n(\partial_\infty X)$ to the formal sum of its preimages divided by $(n+1)!$. One checks that this defines a chain map. Thus one obtains an injection $H_*(C_*(\partial_\infty X)\otimes_{\bz G} \bz)\rightarrow H_*(\widehat{C}_*(\partial_\infty X)\otimes_{\bz G} \bz)$. Therefore, each $G$-equivariant cocycle on $\widehat{C}_*(\partial_\infty X)$ also defines a $G$-equivariant cocycle on $C_*(\partial_\infty X)$. (One may think of this new cocycle as taking the signed average of the evaluations of the old cocycle on the $(n+1)!$ simplices obtained by permuting the vertices.) In particular, for ${\br}$-rank one spaces, the algebraic volume $algvol$ yields a well-defined cocycle on $C_*(\partial_\infty X)\otimes_{\bz G} \bz$. (This was not made explicit in \cite{Kue}.)
\end{remark}

If $\rho:G\ra \mathrm{SL}(n,\bc)$ is a nontrivial (hence reductive) representation,
then it induces a smooth map $X=G/K\ra \mathrm{SL}(n,
\bc)/\mathrm{SU}(n)$ and its extension to the ideal boundary
$$\rho_\infty:\partial_\infty X\ra \partial_\infty
(\mathrm{SL}(n,\bc)/\mathrm{SU}(n)).$$  Using this one can define
a generalized Bloch invariant of a $\bq$-rank $1$ locally
symmetric space $N=\Gamma \backslash G/K$ as follows. Fix a point
$c_0 \in \partial_\infty X$ and define
$ev_{\Gamma,c_0,c_1,\ldots,c_s} : C_*(B\Gamma^{comp}) \ra
C_*(\partial_\infty X)\otimes_{\bz G} \bz$ on generators by
$$ev_{\Gamma,c_0,c_1,\ldots,c_s}(\gamma_1,\cdots,\gamma_k)=(c_0,\gamma_1c_0,\cdots,\gamma_1\cdots
\gamma_k c_0)\otimes 1$$ for $\gamma_1,\ldots,\gamma_k \in
\Gamma$, and
$$ev_{\Gamma,c_0,c_1,\ldots,c_s}(p_1,\ldots,p_{k-1},c_i)=(c_0,p_1
c_0,\ldots p_1\ldots p_{k-1}c_0,c_i)\otimes 1$$ for
$p_1,\ldots,p_{k-1} \in \Gamma_i$. It is straightforward to check
that $ev_{\Gamma,c_0,c_1,\ldots,c_s}$ extends linearly to a chain
map and thus, it induces a homomorphism
$$(ev_{\Gamma,c_0,c_1,\ldots,c_s})_* : H_*^{simp}(B\Gamma^{comp},\bz)\ra \mathcal{P}_*(X).$$

In addition, one can define a map $$\rho_\infty :
C_*(\partial_\infty X) \otimes_{\bz G} \bz \ra C_*( \partial_\infty
(\mathrm{SL}(n,\bc)/\mathrm{SU}(n)))\otimes_{\bz
\mathrm{SL}(n,\bc)} \bz$$ defined by $$ \rho_\infty
((\theta_0,\ldots,\theta_k) \otimes
1)=(\rho_\infty(\theta_0),\ldots,\rho_\infty(\theta_k))
\otimes1.$$ Hence, it yields a homomorphism
$$(\rho_\infty)_* : \mathcal{P}_*(X) \ra \mathcal{P}_*^n(\bc).$$

To obtain an element of the generalized pre-Bloch group of $\bc$
for a $\bq$-rank $1$ locally symmetric space $N=\Gamma \backslash
G/K$, we need an integer homology class in
$H^{simp}_d(B\Gamma^{comp},\bz)$. Note that $EM^{-1}_d[M,\partial
M] \in H^{simp}_d(B\Gamma^{comp},\bq)$ is a rational homology
class.  However, it can be easily checked that we can obtain an
integer homology class $EM^{-1}_d[M,\partial M]_\bz \in
H^{simp}_d(B\Gamma^{comp},\bz)$ from the relative fundamental
class $[M,\partial M]_\bz \in H^d(M,\partial M, \bz)$ with integer
coefficients as follows. Given a relative fundamental cycle $z$
with integer coefficients,
$c(z^0)=z^0+(-1)^{d+1}\mathrm{Cone}_g(\partial z^0)$ defined in
the proof of Lemma \ref{volume2} is also a cycle with integer
coefficients. For another relative fundamental cycle $\bar z$ with
integer coefficients, there are chains $w \in C_{d+1}(M,\bz)$ and
$y \in C_d(\partial M,\bz)$ with all vertices in $x_0$ and such that $z^0
- \bar z^0=\partial w^0+y^0$. Then {\setlength\arraycolsep{2pt}
\begin{eqnarray*}
c(z^0)-c(\bar z^0)&=&z^0-\bar z^0+(-1)^{d+1}(\mathrm{Cone}_g(\partial z^0)-\mathrm{Cone}_g(\partial \bar z^0)) \\
&=& \partial w^0 +y^0 + (-1)^{d+1} \mathrm{Cone}_g(\partial y^0) \\
&=& \partial w^0+y^0+(-1)^{d+1}(\partial \mathrm{Cone}_g(y^0)+(-1)^d y^0) \\
&=& \partial(w^0+(-1)^{d+1} \mathrm{Cone}_g(y^0))
\end{eqnarray*}}
It is obvious that $\Phi(w^0+(-1)^{d+1} \mathrm{Cone}_g(y^0))$ is
a chain in $C_{d+1}^{simp}(B\Gamma^{comp},\bz)$. Hence $c(z^0)$
determines a homology class in $H_d^{simp}(B\Gamma^{comp},\bz)$
independent of the choice of relative fundamental cycle $z$,
denoted by $EM^{-1}_d[M,\partial M]_\bz$.

\begin{defi}
For a $\bq$-rank $1$ locally symmetric space $N$ of dimension $d$,
define an element $\beta_\rho(N)$ in the generalized pre-Bloch group $\mathcal P^n_d(\bc)$ by
$$\beta_\rho(N):= (\rho_\infty)_d \circ
(ev_{\Gamma,c_0,c_1,\ldots,c_s})_d \circ EM^{-1}_d[M,\partial
M]_\bz.$$
\end{defi}

Suppose that every $\rho(\Gamma_i)$ is unipotent. Then the proof
of Proposition \ref{prop:preimage} works for $(B_\rho \circ
B_j)_*EM^{-1}_d[M,\partial M]_\bz \in
H^{simp}_d(B\mathrm{SL}(n,\overline{\bq})^\mathrm{fb},\bz)$. Thus,
it has a preimage $\overline{\gamma}(N)_\bz \in
H^{simp}_d(B\mathrm{SL}(n,\overline{\bq}),\bz)$. In the case that
$N$ is a $\br$-rank $1$ locally symmetric space, Kuessner
\cite{Kue} showed that
$$(ev_{\mathrm{SL}(n,\bc)})_d(\overline{\gamma}(N)_\bz)=\beta_\rho(N)$$
where the evaluation map $$ev_{\mathrm{SL}(n,\bc)} :
C^{simp}_*(B\mathrm{SL}(n,\mathbb{C}),\bz) \ra C_*(\partial_\infty
(\mathrm{SL}(n,\bc)/\mathrm{SU}(n)) \otimes_{\bz
\mathrm{SL}(n,\bc)} \bz$$ is defined on generators by
$$ev(g_1,\ldots,g_k) =(c_0,g_1 c_0,\ldots, g_1 \ldots g_k c_0)\otimes
1.$$

In the same way, this holds for $\bq$-rank $1$ locally
symmetric spaces. Specially
it recovers the classical Bloch invariant of cusped hyperbolic
3-manifolds. For this reason we will call $\beta_\rho(N)$ the
\emph{generalized Bloch invariant} for either a compact locally symmetric
manifold or a finite volume $\bq$-rank $1$ locally symmetric space $N$.

One advantage of our approach is that one can define the Bloch
invariant without the notion of degree one ideal triangulation.
Neumann and Yang used the fact that cusped hyperbolic
$3$-manifolds admit a degree one ideal triangulation to define the
classical Bloch invariant of cusped hyperbolic $3$-manifolds.
Since it is not known whether general locally symmetric spaces admit
such a triangulation, it seems to be difficult to extend the
definition of the classical Bloch invariant from hyperbolic
$3$-manifolds to general locally symmetric spaces in the same way
that Neumann and Yang constructed it. However, we here use only
the relative fundamental cycle $[M,\partial M]_\bz$ to define
$\beta_\rho(N)$ and moreover, this agrees with the classical Bloch
invariant for cusped hyperbolic $3$-manifolds \cite{Kue}. Hence
our approach makes it possible to generalize the Bloch invariant
for cusped hyperbolic $3$-manifolds without the existence of a
degree one ideal triangulation. Furthermore, even if one defines  an
invariant by using a degree one ideal triangulation of $N$, the
invariant should agree with $\beta_\rho(N)$. (This is shown for $\br$-rank 1 spaces in\cite[Theorem 4.0.2]{Kue}.)

\section{Bloch group for convex projective manifolds}
Given a strictly convex real projective manifold $N$, up to taking a
double cover, there is a holonomy map
$$\rho:\pi_1(N)=\Gamma\ra \mathrm{SL}(n,\br)$$ where $\Gamma$ acts on a strictly
convex projective domain $\Omega$ equipped with a Hilbert metric.
If $\Omega$ is conic, then the image of $\rho$ is in
$\mathrm{SO}(n-1,1)$ and the manifold is a real hyperbolic
manifold.

Let's assume that $\mathrm{dim} \left(N\right)=3$.  For a given hyperbolic
representation $\rho_0:\Gamma \ra \mathrm{SL}(2,\bc)\subset
\mathrm{SL}(4,\br)\subset \mathrm{SL}(4,\bc)$, assume there is a
deformation of $\rho_0$ to convex projective structures. The
inclusion $$i:\mathrm{SL}(2,\bc)= \mathrm{SO}(3,1)\subset
\mathrm{SL}(4,\br)\subset \mathrm{SL}(4,\bc)$$ is not the standard
one but by \cite[Corollary 5]{Ku} we have $i^*b_3\neq 0$.

There is a canonical invariant called Bloch invariant developed by
Dupont-Sah and others. We want to generalize this notion to
projective manifolds.

When $\Gamma$ acts on the strictly convex domain $\Omega\subset
\br^3$, in general $Aut(\Omega)$ does not have a Lie group structure
and we do not have a volume form naturally induced from the Lie
group. On the other hands, $\Omega$ has a Finsler metric, called
Hilbert metric invariant under $Aut(\Omega)$. $\Omega$ with a
Hilbert metric behaves like a hyperbolic space, hence one can define
a straight simplex. Fix a base point $x\in\Omega$. The volume class
$v_d\in H^d(\Gamma,\br)$ is defined by the cocycle
\begin{eqnarray}\label{volume}
\nu_d(\gamma_0,\ldots,\gamma_d)=\int_{str(\gamma_0x,\ldots,\gamma_d x)} dvol_\Omega,
\end{eqnarray}
 where
$dvol_\Omega$ is a signed Finsler volume form on $\Omega$, and
$str$ is a geodesic straightening. One can check that it is a
cocycle since $str(\gamma_0x,\ldots,\gamma_d x)$ is a top
dimensional simplex. We have the following lemma similar to Lemma
\ref{lem:IdealStraight}.

\begin{lemma}
The volume of the ideal straight simplex
$str(x_0,\ldots,x_{d-1},c)$ is finite for any $x_0,\ldots,x_{d-1}
\in \Omega$ and $c$ is a cuspidal point.
\end{lemma}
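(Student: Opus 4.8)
The plan is to follow the same scheme as in the proof of Lemma \ref{lem:IdealStraight}, with the rational horocyclic coordinates replaced by horospherical coordinates adapted to the cuspidal point $c$. First I would fix a parametrization $\varphi:\Delta^{d-1}\ra\Omega$ of $str(x_0,\ldots,x_{d-1})$ and, for each $s\in\Delta^{d-1}$, take the Hilbert geodesic ray $\gamma_s$ from $\varphi(s)$ towards $c$, parametrized by Hilbert arclength $t$. Since $\Omega$ is strictly convex, Hilbert geodesics are Euclidean segments, so this ray is the segment from $\varphi(s)$ to $c\in\partial\Omega$ and it is unique; hence $\psi(s,t):=\gamma_s(t)$ is a well-defined parametrization of $str(x_0,\ldots,x_{d-1},c)$ on $\Delta^{d-1}\times[0,\infty)$, playing the role of the map $\psi$ in Lemma \ref{lem:IdealStraight}.

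Next I would estimate $\psi^{*}dvol_\Omega$ exactly as there. The vector $\partial\psi/\partial t$ has Hilbert norm $1$, so a Gram-determinant estimate bounds $\psi^{*}dvol_\Omega(s,t)$ by $\Lambda(\psi(s,t))\,\sqrt{G(\partial\varphi/\partial s(s))}\,ds\,dt$, where $\Lambda$ is the density of the Finsler volume form in the ambient coordinates; thus it suffices to show that along each ray this density is integrable in $t$. For this I would use the structure of a finite-volume cusp of a strictly convex projective manifold: a horoball $B$ based at $c$ admits coordinates $(w,t)\in W\times[0,\infty)$, with $W$ a bounded domain and $t$ the Busemann parameter towards $c$, in which the rays towards $c$ are the vertical lines $\{w\}\times[0,\infty)$ and $dvol_\Omega=h(w)\,e^{-\delta t}\,dw\,dt$ for some $\delta>0$ and bounded $h$. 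This is the exact analogue of the formula $dvol_X=h(x,z)\,e^{-2\|\rho_\mathbf{P}\|y}\,dx\,dy\,dz$ used before, and in the conic case it is simply the hyperbolic horoball computation. Granting it, Fubini gives
\[
\mathrm{Vol}(str(x_0,\ldots,x_{d-1},c))\ \le\ C\cdot\mathrm{Vol}(str(x_0,\ldots,x_{d-1}))\cdot\int_0^\infty e^{-\delta t}\,dt\ <\ \infty .
\]

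The main obstacle is precisely this decay estimate for the Finsler volume density along rays converging to $c$. When the cusp arises by deformation from a hyperbolic structure one can read it off from the explicit model of a generalized cusp, but a self-contained proof should derive the exponential decay from the fact that the Hilbert metric on $\Omega$ is Gromov hyperbolic — so that geodesic rays asymptotic to $c$ converge exponentially — together with the cocompactness of the $\Gamma_i$-action on a horosphere based at $c$, which yields uniform control of $h$ on $W$; assembling these inputs into the clean coordinate form above is the technical heart of the argument.
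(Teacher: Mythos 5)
Your overall scheme is the natural one (transplant the computation of Lemma \ref{lem:IdealStraight} to the Hilbert-metric setting), but the argument has a genuine gap at exactly the point you flag yourself: the claim that in horospherical coordinates at the cusp one has $dvol_\Omega=h(w)\,e^{-\delta t}\,dw\,dt$ with $\delta>0$. This decay of the Finsler volume density along rays into a cusp is not a formal analogue of the rational horocyclic formula; it is a substantive structural statement about cusps of strictly convex projective manifolds, and it is precisely the content that the paper does not reprove but obtains by citing \cite[Proposition 11.2]{CLT} --- the paper's entire proof is that one citation. Your proposed derivation from Gromov hyperbolicity of the Hilbert metric plus cocompactness of the $\Gamma_i$-action on a horosphere is not a proof: Gromov hyperbolicity of $(\Omega,d_{Hilbert})$ is itself a nontrivial property tied to regularity of $\partial\Omega$ (it is not automatic from strict convexity alone), and even granting it, exponential convergence of asymptotic geodesics does not by itself yield an exponential upper bound on the volume \emph{density} in the Busemann parameter. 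So the ``technical heart'' you defer is the whole lemma.

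A secondary issue: the Gram-determinant step of Lemma \ref{lem:IdealStraight} uses the Riemannian Hadamard-type inequality $\sqrt{G(v_1,\ldots,v_{d-1},w)}\le\sqrt{G(v_1,\ldots,v_{d-1})}\,\|w\|$, which is an inner-product statement. Since $dvol_\Omega$ is a Finsler volume form, you cannot invoke this verbatim; you would need to fix a choice of Finsler volume (e.g.\ Busemann) and prove a comparison between the $d$-volume of the parallelepiped spanned by $\bigl(\partial\varphi/\partial s,\partial\psi/\partial t\bigr)$ and the $(d-1)$-volume of its base, with a constant depending only on the convexity of the unit balls. This is fixable but is an additional step your sketch elides. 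In short: correct strategy, but the two analytic inputs on which the finiteness actually rests are asserted rather than established, whereas the paper supplies the essential one by reference to \cite{CLT}.
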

\begin{proof}
This follows from the Proposition 11.2 of \cite{CLT}.
\end{proof}
This lemma allows us to carry out the similar constructions as in
previous sections. Hence we can define a cocycle $\bar \nu_d \in
C^d_{simp}(B\Gamma^{comp})$ extending $comp(\nu_d)$. Let $\bar v_d$
denote the element represented by $\bar \nu_d$ in
$H^d_{simp}(B\Gamma^{comp},\br)$. By Lemma \ref{lem:EMmap},
$H_d(B\Gamma^{comp},\br)=H_d(M,\partial M,\br)=\br$ where $M$ is a
compact manifold with boundary whose interior is homeomorphic to
$N$. It is known that for a cusped hyperbolic 3-manifold, there
exists a degree one  ideal triangulation. We can use the same ideal
triangulation to obtain a triangulation of $M$ by Hilbert metric
ideal tetrahedra to obtain
$$\langle \bar v_d, EM^{-1}_d[M, \partial M]\rangle= \mathrm{Vol}_{Finsler}(N).$$

For the Borel class $b_3\in H^3_c(\mathrm{GL}(\bc),\br)$, it is
known that $\rho^*b_3\neq 0$ for any nontrivial representation
$\rho:\mathrm{SL}(2,\bc)\ra \mathrm{GL}(\bc)$. Then for any finite
volume hyperbolic manifold $N=\Gamma\backslash {H}_\br^3$, the
induced representation $$\rho_0:\Gamma\ra \mathrm{GL}(\bc)$$ gives
rise to a nontrivial Borel class $\rho_0^* b_3=c_{\rho_0}\overline
v_{\rho_0}$ by Lemma \ref{borelclass}. Since a strictly convex
projective structure $\rho:\Gamma \ra \mathrm{SL}(4,\br)$ lies in
the same component containing $\rho_0$ in the character variety
$\chi(\Gamma,\mathrm{SL}(4,\bc))$, $H(\rho)[M,\partial M]$ is
nontrivial, indeed equal to $H(\rho_0)[M,\partial M] \in
H_3(B\mathrm{GL}(\bc)^\delta,\bz)$. Hence
{\setlength\arraycolsep{2pt}
\begin{eqnarray*}
\langle b_3, B(\rho) \circ EM^{-1}[M,\partial M] \rangle &=& \langle b_3, B(\rho_0) \circ EM^{-1}[M,\partial M] \rangle \\
&=& \langle \rho_0^* b_3, EM^{-1}[M,\partial M]\rangle \\
&=& c_{\rho_0}\mathrm{Vol}_{hyp}(M)=c_\rho \mathrm{Vol}_{Finsler}(M).
\end{eqnarray*}}
Since $H^d_{simp}(B\Gamma^{comp},\br)=H_d(B\Gamma^{comp},\br)^*=\br$, $\rho^* b_3=c_\rho \bar v_\rho$.

If $\sum a_i\tau_i$ is  a proper ideal fundamental cycle (i.e.
each simplex is a proper ideal straight simplex) of $M$, the sum
of the cross-ratios $$\beta(M)=\sum a_i[cr(\tau_i)]\in\cal
P_d(\Omega):=H_3(C_*(\partial\Omega)_\Gamma)$$ defines a generalized Neumann-Yang invariant. Note
that if $\Omega$ is not conic, the cross-ratio is not a complex number as in $H_\br^3$. It
is a question how to interpret this invariant in terms of a volume
and the Chern-Simon invariant.

\section{Bloch invariant in $\mathrm{SU}(2,1)$ and $\mathrm{SL}(3,\bc)$}

\subsection{Falbel-Wang invariant}
Falbel constructed a discrete representation
$$\rho:\pi_1(S^3\setminus K)\ra \mathrm{SU}(2,1)\subset \mathrm{SL}(3,\bc)$$ which is faithful and parabolic on the
torus boundary where $K$ is a figure 8 knot. Falbel-Wang further
constructed a Bloch invariant using this representation. In this section, we prove
that their invariant can be computed from $H\left(\rho\right)EM^{-1}\left[M,\partial M\right]$.

Following \cite[Section 3.8]{bfg} we identify the complex hyperbolic space $H_\bc^2$ with
$\pi\left(V_-\right)\subset {\mathbb C}P^2$, where $V_-:=\left\{\left(
x,y,z\right)\in{\mathbb C}^3: x\overline{z}+
y\overline{y}+z\overline{x}<0\right\}$ and $\pi:{\mathbb C}^3-0\rightarrow {\mathbb C}P^2$ is the canonical projection.
The ideal boundary $\partial_\infty{\mathbb C}H^2\simeq S^3$ is then identified with $\pi\left(V_0\right)$, where $V_0:=
\left\{\left(x,y,z\right)\in{\mathbb C}^3: x\overline{z}+y\overline{y}+z\overline{x}=0\right\}$.

The following construction involves an identification of ${\mathbb C}P^1$ with the set of
(affine) complex lines through a given point in ${\mathbb C}P^2$. There is some arbitrariness
in choosing such an identification, a specific choice is given in \cite[Section 2.5]{FW} with a derived explicit formula in \cite[Definition 2.24]{FW}.

\begin{defi}\label{falbelwang}{\bf (Falbel-Wang construction, \cite[Section 2.5]{FW})}:

a) Fix an identification $\partial_\infty H^2_{\mathbb C}=S^3$ and
define a homomorphism $$FW_{01}:{\mathcal P}_3\left( H_{\mathbb C}^2\right)\rightarrow
{\mathcal P}_3\left(
 H_{\mathbb R}^3\right)$$
on generators $\left(p_0,p_1,p_2,p_3\right)$ of ${\mathcal P}_3\left(
 H_{\mathbb C}^2\right)$ by $$FW_{01}\left(p_0,p_1,p_2,p_3\right)=\left(t_0,t_1,t_2,t_3\right),$$
where we define $t_0\in {\mathbb C}P^1=\partial_\infty H^3_{\mathbb R}$ to be
the complex line through $p_0$ tangent to $S^3\subset {\mathbb C}P^2$ and for $i=1,2,3$ we define
$t_i\in {\mathbb C}P^1=\partial_\infty H^3_{\mathbb R}$ to be the complex line in ${\mathbb C}P^2$ passing through $p_0$ and $p_i$.

b) For $a\not=b\in\left\{0,1,2,3\right\}$ there are unique $k,l\in
\left\{0,1,2,3\right\}$ such that the ordered
set $\left(a,b,k,l\right)$ is an even permutation of $\left(0,1,2,3\right)$ and we define $$FW_{ab}:
{\mathcal P}_3\left( H_{\mathbb C}^2\right)\rightarrow
{\mathcal P}_3\left(
 H_{\mathbb R}^3\right)$$
on generators $\left(p_0,p_1,p_2,p_3\right)$ of ${\mathcal P}_3\left(
 H_{\mathbb C}^2\right)$ by $$FW_{ab}\left(p_0,p_1,p_2,p_3\right)=FW_{01}\left(p_a,p_b,p_k,p_l\right).$$
\end{defi}
We will occasionally consider the $FW_{ab}$ as maps which send
($\mathrm{SU}(2,1)$-orbits of) ideal simplices in $H^2_{\mathbb
C}$ to ($\mathrm{SO}(3,1)$-orbits of) ideal simplices in
$H^3_{\mathbb R}$.

If $M$ is a hyperbolic $3$-manifold and $\rho:\pi_1M\rightarrow
\mathrm{SU}(2,1)$ is a reductive representation (that means
$\rho\left(\pi_1M\right)\subset \mathrm{SU}(2,1)$ is a reductive
subgroup), then by \cite{cor} (see also \cite{lab}) we obtain a
$\rho$-equivariant developing map
$D:H^3_{\mathbb{R}}=\widetilde{M}\rightarrow H^2_{\mathbb C}$ and
a continuous boundary map $\partial_\infty D:\partial_\infty
H^3_\br \rightarrow \partial_\infty H^2_{\mathbb C}$. In
particular, if $T$ is an ideal simplex in $M$ (that is a
$\pi_1M$-orbit of some ideal simplex $\widetilde{T}$ with vertices
$v_0,v_1,v_2,v_3\in\partial_\infty H_\br^3$), then we can define
$D\left(T\right)$ to be the $\pi_1M$-orbit of the ideal simplex in
$H^2_{\mathbb C}$, whose vertices are $\partial_\infty D
\left(c_i\right)$ for $i=0,1,2,3$.

Let ${\mathcal P}_3^{nd}
\left(H^3_{\mathbb R}\right)\subset {\mathcal P}_3
\left(H^3_{\mathbb R}\right)$ be the subgroup generated by 4-tuples $\left(z_0,z_1,z_2,z_3\right)$ of {\em pairwise distinct} points. (By definition, simplices in
an ideal triangulation are nondegenerate and thus give elements in ${\mathcal P}_3^{nd}
\left(H^3_{\mathbb R}\right)$.)

Recall that the cross ratio $$X:{\mathcal P}_3^{nd}
\left(H^3_{\mathbb R}\right)\rightarrow
{\mathcal{P}}\left({\bc}\right)$$ is well defined and yields an
isomorphism between ${\mathcal P}_3^{nd} \left(H^3_{\mathbb
R}\right)$ and ${\mathcal{P}}\left({\bc}\right)$. We will use the
abbreviation
$$FW:=FW_{01}+FW_{10}+FW_{23}+FW_{32}:{\mathcal P}_3\left(
H_{\mathbb C}^2\right)\rightarrow {\mathcal P}_3\left(
 H_{\mathbb R}^3\right).$$

\begin{defi}\label{beta}{\bf (Falbel-Wang invariant)}: Let $M=\cup_{i=1}^r T_i$
be an ideal triangulation of a hyperbolic $3$-manifold and
$\rho:\pi_1M\rightarrow \mathrm{SU}(2,1)$ a reductive
representation, then the invariant
$\beta_{FW}\left(M\right)\in{\mathcal{B}}\left({\mathbb
C}\right)\subset {\mathcal{P}}\left({\mathbb C}\right)$ is defined
by $$\beta_{FW}\left(M\right):=\sum_{i=1}^r X\left(FW\left(cr
\left(D\left(T_i\right)\right)\right)\right).$$
\end{defi}

(It is proved in \cite[Theorem 1.1]{FW} that $\beta_{FW}\left(M\right)$ lies in ${\mathcal{B}}\left({\mathbb C}\right)$ and does not depend on the ideal triangulation. The latter fact will also follow from our \hyperref[falbelwangcomparison]{Lemma \ref*{falbelwangcomparison}}.)


\begin{lemma}\label{falbelwangcomparison}Let $M$ be a finite-volume hyperbolic $3$-manifold, $\rho:\pi_1M\rightarrow \mathrm{SU}(2,1)$ a reductive representation which maps the peripheral subgroups to unipotent subgroups. Then
$$\beta_{FW}\left(M\right)= X\left(FW\left(H\left(ev\right)\left(H\left(\rho\right)\left(EM^{-1}\left[M,\partial M\right]\right)\right)\right)\right).$$
\end{lemma}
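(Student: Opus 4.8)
The plan is to compare the two descriptions of the Falbel--Wang invariant by tracing a single relative fundamental cycle through both constructions. On the left-hand side one starts from a degree-one ideal triangulation $M=\cup_i T_i$; on the right-hand side one starts from the class $EM^{-1}[M,\partial M]$ in $H_d^{simp}(B\Gamma^{comp},\bz)$, pushes it forward by $H(\rho)$ and evaluates it into the (generalized) pre-Bloch group via $H(ev)$. The key point is that an ideal triangulation, after moving vertices to the cusp points $c_i$ as in \hyperref[volume2]{Lemma \ref*{volume2}} and Section~7, gives precisely a cycle in $\widehat C^{str,x_0,c}_d(M)$ representing $EM^{-1}[M,\partial M]_\bz$; so the chain $\sum_i a_i T_i$ (or rather its image under $\Phi$ in $C^{simp}_d(B\Gamma^{comp})$) is one concrete representative of the class on the right. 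This is the higher-rank analogue of the $\br$-rank one statement $(ev_{\mathrm{SL}(n,\bc)})_d(\overline\gamma(N)_\bz)=\beta_\rho(N)$ recorded in Section~8, and the hypothesis that $\rho$ maps peripheral subgroups to unipotent subgroups is exactly what is needed (as in \hyperref[prop:preimage]{Proposition \ref*{prop:preimage}}) for the ideal simplices with a vertex at a cusp to have finite-volume images and for the evaluation map to be defined on them.

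Concretely I would proceed as follows. First, fix the ideal triangulation of $M$ and, using the homotopy of \hyperref[chainhomotopy]{Lemma \ref*{chainhomotopy}} together with the straightening discussed before \hyperref[volume2]{Lemma \ref*{volume2}}, replace it by a cycle $z$ in $\widehat C^{str,x_0,c}_d(M)$ with $\Phi(z)\in C^{simp}_d(B\Gamma^{comp})$ representing $EM^{-1}[M,\partial M]_\bz$; each geometric ideal tetrahedron $T_i$ becomes a simplex of the form $(p_1,p_2,c_i)$ or $(\gamma_1,\gamma_2,\gamma_3)$. Second, apply $H(\rho)=(B_\rho\circ B_j)_*$ and then the evaluation map $ev_{\mathrm{SL}(n,\bc)}$ (here $n=3$): by the equivariance of the developing map $D$ and its boundary extension $\partial_\infty D$, the image of each $T_i$ is the $\mathrm{SU}(2,1)$-orbit of the ideal tetrahedron $D(T_i)$ in $H^2_\bc$, i.e. exactly $cr(D(T_i))$ in $\mathcal P_3(H^2_\bc)$. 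Thus $H(ev)(H(\rho)(EM^{-1}[M,\partial M]))=\sum_i a_i\, cr(D(T_i))$. Third, apply the composite $X\circ FW:\mathcal P_3(H^2_\bc)\to\mathcal P(\bc)$ to both sides; by Definition~\ref{beta} this produces exactly $\beta_{FW}(M)=\sum_i a_i\,X(FW(cr(D(T_i))))$. Chaining these three identifications gives the asserted equality.

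Two technical points deserve care and constitute the main obstacles. The first is well-definedness on the nose: $EM^{-1}[M,\partial M]$ is a priori only a rational class, whereas $\beta_{FW}$ is built from an integral triangulation; I would handle this exactly as in Section~8, using the integral refinement $EM^{-1}[M,\partial M]_\bz$ and the fact that two relative fundamental cycles differ by $\partial w^0 + y^0$ with $\Phi$ of the correcting chain lying in $C^{simp}_{d+1}(B\Gamma^{comp},\bz)$, so that $ev$ kills the difference in $\mathcal P_3(H^2_\bc)$. The second, and the genuinely delicate one, is the behaviour at the cusps: I must check that the ideal simplices $(p_1,\ldots,p_{d-1},c_i)$ arising from the triangulation are nondegenerate after applying $FW$ — i.e. that the four boundary points are pairwise distinct so that the cross-ratio $X$ is defined — and that the degenerate contributions (if any) cancel. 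This is where I would invoke the unipotency hypothesis on $\rho(\Gamma_i)$ together with the remark after Definition~\ref{gammai} and the argument ruling out edges representing $0\in\pi_1(M,x)$ in the proof of \hyperref[homologyofhatiso]{Corollary \ref*{homologyofhatiso}}; modulo that input, the identification of the two invariants is formal. Finally, the independence of $\beta_{FW}(M)$ from the chosen ideal triangulation follows a posteriori, since the right-hand side manifestly depends only on $[M,\partial M]$ and $\rho$.
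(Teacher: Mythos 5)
There is a genuine gap here --- in fact two. First, your central claim, that an ideal triangulation $M=\cup_i T_i$ ``after moving vertices to the cusp points'' becomes a cycle in $\widehat C^{str,x_0,c}_d(M)$ with each $T_i$ of the form $(p_1,p_2,c_i)$ or $(\gamma_1,\gamma_2,\gamma_3)$, is false. Simplices of $\widehat C^{str,x_0,c}_d(M)$ have all vertices at $x_0$ except possibly the last one, which may sit at a single cusp point $c_i$; their images under $ev$ are ideal simplices with vertices in the orbit $\Gamma c_0$ together with at most one $c_i$. A tetrahedron of an ideal triangulation has all four vertices ideal, generically at four unrelated parabolic fixed points, and is not of this form. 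Consequently the identity you assert, $H(ev)(H(\rho)(EM^{-1}[M,\partial M]))=\sum_i cr(D(T_i))$ \emph{on the level of cycles}, does not hold. What is true, and what the paper's proof actually uses, is that the representative $cr\bigl(\hat C\bigl(str\bigl(z+\mathrm{Cone}(\partial z)\bigr)\bigr)\bigr)$ of $H(ev)EM^{-1}[M,\partial M]$ --- obtained from a relative fundamental cycle $z$ by coning, straightening, and then pushing the basepoint $x_0$ to an ideal point $c_0$ via the chain map $\hat C$ of \cite[Lemma 3.3.4]{Kue} --- is merely \emph{homologous} to $\sum_i cr(T_i)$ in $C_*(\partial_\infty\widetilde M)_\Gamma$, by \cite[Lemma 3.4.1]{Kue}. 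Your proposal contains no substitute for this comparison step between the fundamental-class representative and the ideal triangulation.

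Second, once one only knows that the two cycles differ by a boundary $\partial w$, one still has to check that $X\circ FW$ annihilates $\partial w$: the quantity $\beta_{FW}(M)$ is a specific element of $\mathcal P(\bc)$ defined simplex by simplex, and it is not a priori a homological invariant of the class in $\mathcal P_3(H^2_{\bc})$. This is exactly where the paper invokes \cite[Lemma 3.2]{FW} (resting on \cite[Theorem 5.2]{falvol}), namely that $X(FW(\partial u))$ lies in the subgroup generated by the five-term relations for every $u\in C_4(\partial_\infty H^2_{\bc})_G$. This is the substantive input of the whole lemma and it is absent from your argument. By contrast, the two ``technical points'' you single out (integrality of the class, and nondegeneracy at the cusps via unipotency) are side issues on which the paper's proof does not need to dwell. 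Your closing remark that triangulation-independence of $\beta_{FW}$ follows a posteriori is correct, but only once the two missing steps above are supplied.
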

\begin{proof}
Let $x_0\in M$, $c_0\in\partial_\infty\widetilde{M}=\partial_\infty H^3_{\mathbb{R}}$ and $b_0:=\partial_\infty D\left(c_0\right)\in\partial_\infty H^2_{\mathbb{C}}$. Lemma 3.3.4 in \cite{Kue} constructs a chain map $\hat{C}:\hat{C}_*^{str,x_0}\left(M\right)\rightarrow \hat{C}_*^{str,c_0}\left(M\right).$

Let $G=\mathrm{SU}(2,1)$, $K=\mathrm{S}(\mathrm U(2)\times \mathrm U(1))$, $\Gamma=\pi_1M$, $\Gamma_i=\pi_1\partial_iM$ for the path components $\partial_iM$ of $\partial M$, $c_i$ the cusps associated to $\Gamma_i$.
We use the commutative diagram
$$\begin{xy}
\xymatrix{
C_*\left(BG^{comp}\right)
\ar[r]^{ev_{b_0}}&C_*\left(\partial_\infty G/K\right)_G\\
C_*\left(B\Gamma^{comp}\right)\ar[r]^{ev_{c_0}}\ar[u]^{B\rho}&
C_*\left(\partial_\infty\widetilde{M}\right)_\Gamma\ar[u]^{\partial_\infty D}\\
\hat{C}_*^{str,x_0}\left(M\right)\ar[r]^{\hat{C}}\ar[u]^{\hat{\Phi}}& \hat{C}_*^{str,c_0}\left(M\right)\ar[u]^{cr}\\
C_*\left(M\cup\left\{\Gamma c_1,\ldots,\Gamma c_s\right\}\right)\ar[u]^{str}&\\
Z_*\left(\overline{M},\partial \overline{M}\right)
\rightarrow C_*\left(\mathrm{Dcone}\left(\cup_{i=1}^s \partial_i\overline{M}\rightarrow \overline{M}\right)\right)\ar[u]^{\simeq}&.}
 \end{xy}$$
whose derivation (except for the first square) is explained in the
proof of \cite[Theorem 4.0.2]{Kue}. In the first square we have
$$ev_{c_0}\left(g_1,\ldots,g_n\right)=\left(c_0,g_1c_0,\ldots,g_1\ldots
g_nc_0\right) \otimes 1$$ for $\left(g_1,\ldots,g_n\right)\in BG$
and
$$ev_{c_0}\left(\mathrm{Cone}_{c_i}\left(g_1,\ldots,g_{n-1}\right)\right)
=\left(c_0,g_1c_0,\ldots,g_1\ldots g_{n-1}c_0,c_i\right)\otimes
1,$$ similarly for $ev_{c_0}$.

The diagram shows that
$$H\left(ev_{b_0}\right)\left(H\left(\rho\right)\left(EM^{-1}\left[M,\partial M\right]\right)\right)$$
is represented by $$\partial_\infty
D\left(cr\left(\hat{C}\left(str\left(z+\mathrm{Cone}\left(\partial
z\right)\right) \right)\right)\right)$$ whenever $z\in
Z_*\left(\overline{M},\partial\overline{M}\right)$ is a relative
fundamental cycle.

If $z\in Z_*\left(\overline{M},\partial\overline{M}\right)$ is a
relative fundamental cycle, then
$str\left(z+\mathrm{Cone}\left(\partial z\right)\right)\in
\hat{C}^{str,x_0}_*\left(M\right)$ is an ideal fundamental cycle
in the sense of \cite[Definition 3.1.4]{Kue}. By \cite[Lemma 3.3.4]{Kue}
this implies that
$\hat{C}\left(str\left(z+\mathrm{Cone}\left(\partial
z\right)\right)\right)$ is an ideal fundamental cycle.

On the other hand, let $M=\cup_{i=1}^r T_i$ be an ideal
triangulation. Let $p_0^i,p_1^i,p_2^i,p_3^i\in
\partial_\infty\widetilde{M}=\partial_\infty H^3_{\mathbb R}$ be
the ideal vertices of $T_i$. Then\\ $FW\left(\partial_\infty
D\left(p_0^i\right)\right), $ $ FW\left(\partial_\infty
D\left(p_1^i\right)\right), FW\left(\partial_\infty
D\left(p_2^i\right)\right), FW\left(\partial_\infty
D\left(p_3^i\right)\right)$ are the ideal vertices of $FW\left(
D\left(T_i\right)\right)$. By definition we have
$$\beta_{FW}\left(M\right)=\sum_{i=1}^r4X\left(FW_{01}\left(D\left(T_i\right)\right)
\right).$$ We have proved in the proof of \cite[Lemma 3.4.1]{Kue} that
the homology classes of $cr\left(\hat{C}\left(str\left(
z+\mathrm{Cone}\left(\partial z\right)\right)\right)\right)$ and
of $\sum_{i=1}^r cr\left(T_i \right)$ are the same in
$H_*\left(C_*\left(\partial_\infty\widetilde{M}\right)_\Gamma\right)$.
Thus there is some $w\in C_*\left(
\partial_\infty\widetilde{M}\right)_\Gamma$ with
$$\partial w=cr\left(\hat{C}\left(str\left(z+\mathrm{Cone}\left(\partial z\right)
\right)\right)\right)-\sum_{i=1}^r cr\left(T_i
\right).$$
$\partial_\infty D$ is a chain map by construction.
By \cite[Lemma 3.2]{FW} (following \cite[Theorem 5.2]{falvol}), we have that  $X\left(FW\left(\partial C_4\left(\partial_\infty H^2_\bc\right)_G\right)\right)\subset
{\mathbb Z}\left[{\mathbb C}-\left\{0,1\right\}\right]$ is in the subgroup generated by the 5-term relations, that is
$X\left(FW\left(\partial u\right)\right)=0\in{\mathcal{B}}\left({\mathbb C}\right)$
for all $u\in C_4\left(\partial_\infty H^2_\bc\right)_G$.
Hence we obtain
$$X\left(FW\left(\partial_\infty D\left(cr\left(\hat{C}\left(str\left(z+\mathrm{Cone}\left(\partial z\right)\right)\right)\right)\right)\right)\right)-\sum_{i=1}^r
X\left(FW\left(\partial_\infty D\left(cr\left(T_i\right)\right)\right)\right)$$
$$=
X\left(\partial FW\left(\partial_\infty D\left(w\right)\right)\right)=0.$$
Hence the cycle $\sum_{i=1}^r
X\left(FW\left(\partial_\infty D\left(cr\left(T_i\right)\right)\right)\right)$ represents the homology class
$X\left(FW\left(H\left(ev_{b_0}\right)\left(H\left(\rho\right)\left(EM^{-1}\left[M,\partial M\right]\right)\right)\right)\right)$. Because of $\partial_\infty D\left(cr\left(T_i\right)\right)=cr\left(D\left(T_i\right)\right)$ this implies the claim.
\end{proof}

For \hyperref[corfw]{Corollary \ref*{corfw}} and
\hyperref[corbfg]{Corollary \ref*{corbfg}} we will consider the
situation that a $3$-manifold $M^\tau$ is obtained from another
$3$-manifold $M$ by cutting along some $\pi_1$-injective surface
$\Sigma\subset M$ and regluing via $\tau:\Sigma\rightarrow\Sigma$.
If in this situation for a representation $\rho:\pi_1M\rightarrow
G$ we have some $A\in G$ with
$\rho\left(\tau_*h\right)=A\rho\left(h\right)A^{-1}$ for all
$h\in\pi_1\Sigma$, then we get an induced representation
$\rho^\tau:\pi_1M^\tau\rightarrow G$ by a standard application of
the Seifert-van Kampen Theorem as in \cite[Section 2]{Ku2}. This
representation $\rho^\tau$ will be used in the statements of
\hyperref[corfw]{Corollary \ref*{corfw}} and
\hyperref[corbfg]{Corollary \ref*{corbfg}}. We say that the
representation is parabolics-preserving if it sends $\pi_1\partial
M$ to parabolic elements. (It is easy to see from the explicit
description in the proof of \cite[Proposition 3.1]{Ku2} that
$\rho^\tau$ is reductive and parabolics-preserving if $\rho$ is.)
\begin{cor}\label{corfw}Let $M$ be a compact, orientable $3$-manifold,
$\Sigma\subset M$ a properly embedded, incompressible,
boundary-incompressible, $2$-sided surface,
$\tau:\Sigma\rightarrow\Sigma$ an orientation-preserving
diffeomorphism of finite order and $M^\tau$ the manifold obtained
by cutting $M$ along $\Sigma$ and regluing via $\tau$.

If $M$ and $M^\tau$ are hyperbolic and if the reductive,
parabolics-preserving representation $\rho:\pi_1M\rightarrow
\mathrm{SU}(2,1)$ satisfies
$\rho\left(\tau_*\sigma\right)=A\rho\left(\sigma\right)A^{-1}$ for
some $A\in \mathrm{SU}(2,1)$ and all $\sigma\in\pi_1\Sigma$, then
$$\beta_{FW}\left(M\right)\otimes
1=\beta_{FW}\left(M^\tau\right)\otimes 1\in
{\mathcal{B}}(\bc)\otimes\bq$$ with respect to the representations
$\rho$ and $\rho^\tau$.\end{cor}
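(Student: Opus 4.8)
The plan is to combine \hyperref[falbelwangcomparison]{Lemma \ref*{falbelwangcomparison}} with the cut-and-paste argument for fundamental classes of \cite[Sections 2 and 3]{Ku2}. As recorded before the statement, $\rho^\tau$ is again reductive and parabolics-preserving, so \hyperref[falbelwangcomparison]{Lemma \ref*{falbelwangcomparison}} applies both to $(M,\rho)$ and to $(M^\tau,\rho^\tau)$, and since the right-hand side of that Lemma depends only on the homology class $H(ev)(H(\rho)(EM^{-1}[M,\partial M]))\in\mathcal{P}_3(H^2_{\mathbb C})$, it suffices to prove
$$H(ev)(H(\rho)(EM^{-1}[M,\partial M]))=H(ev)(H(\rho^\tau)(EM^{-1}[M^\tau,\partial M^\tau]))\quad\text{in }\ \mathcal{P}_3(H^2_{\mathbb C})\otimes\bq,$$
for then applying the homomorphism $X\circ FW$ and \hyperref[falbelwangcomparison]{Lemma \ref*{falbelwangcomparison}} yields the Corollary.

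First I would set up the Seifert--van Kampen decomposition exactly as in \cite[Section 2]{Ku2}. Cutting $M$ along $\Sigma$ produces a compact manifold $W$, and $\pi_1M$ (resp.\ $\pi_1M^\tau$) is the corresponding amalgamated product or HNN extension of $\pi_1W$ over the copies of $\pi_1\Sigma$, the only difference being that one gluing inclusion in the presentation of $\pi_1M^\tau$ is precomposed with $\tau_*$. The relation $\rho(\tau_*\sigma)=A\rho(\sigma)A^{-1}$ for $\sigma\in\pi_1\Sigma$ means precisely that $\rho^\tau$ agrees with $\rho$ on $\pi_1W$ on one side of $\Sigma$ and differs from it by conjugation by $A^{-1}$ on the other side; this is the explicit description given in the proof of \cite[Proposition 3.1]{Ku2}. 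Because $\tau$ has finite order, one can pick a relative fundamental cycle $z$ of $(M,\partial M)$ adapted to $\Sigma$, that is, restricting to a relative fundamental cycle on each piece of $W$ whose subchains over $\Sigma$ appear with opposite signs on the two sides and are $\tau$-invariant; this normalization forces passage to $\bq$-coefficients and accounts for the $\otimes\bq$ in the statement. Reassembling the same subchains via the regluing then produces a relative fundamental cycle $z^\tau$ of $(M^\tau,\partial M^\tau)$.

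I would then run the chain-level construction underlying $EM^{-1}$, $B\rho$ and $ev$ (the passage of $z$ to a chain $z^0$ in $\widehat{C}^{x_0}_*(M)$, the cycle $c(z^0)$ and its straightening into $\widehat{C}^{str,x_0,c}_*(M)$ from the proof of \hyperref[volume2]{Lemma \ref*{volume2}}, and then $\Phi$, $B\rho$ and $ev$) simultaneously on $z$ and on $z^\tau$. The two resulting cycles in $C_*(\partial_\infty H^2_{\mathbb C})\otimes_{\bz\,\mathrm{SU}(2,1)}\bz$ coincide on the piece of $W$ where the two representations agree, and on the other piece the cycle coming from $z^\tau$ is obtained from the one coming from $z$ by conjugating all $\mathrm{SU}(2,1)$-labels by $A^{-1}$, which may be absorbed into a change of the evaluation basepoint; the subchains over $\Sigma$ carry opposite orientations and cancel in both cycles, and the cone corrections for the cusps match because the peripheral subgroups of $M$ and $M^\tau$ are identified via $\tau_*$. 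Since conjugation by a fixed element of $\mathrm{SU}(2,1)$ and change of the evaluation basepoint both induce the identity on $H_*(B\mathrm{SU}(2,1)^{comp},\bq)$ and hence on $\mathcal{P}_3(H^2_{\mathbb C})\otimes\bq$, the two homology classes agree, and the Corollary follows.

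The main point requiring care — and the reason the argument is not purely formal — is the compatibility of the whole chain-level apparatus with the $\Sigma$-decomposition: one has to arrange that a relative fundamental cycle, together with the auxiliary cone chains for the cuspidal completion, the straightening into $\widehat{C}^{str,x_0,c}_*(M)$, and the chain homotopies of \hyperref[chainhomotopy]{Lemma \ref*{chainhomotopy}} and \hyperref[lem:EMmap]{Lemma \ref*{lem:EMmap}}, can all be taken to respect the splitting into the pieces of $W$, so that the triviality of conjugation and of basepoint change on the coinvariants can be invoked one summand at a time. This is exactly the bookkeeping carried out in \cite[Sections 2 and 3]{Ku2} in the real-hyperbolic setting; the only ingredient beyond that reference is tracking the cusps of $M$ and $M^\tau$, which are quotients of the same horoballs by peripheral subgroups identified via $\tau_*$ and therefore contribute equally in $\mathcal{P}_3(H^2_{\mathbb C})$.
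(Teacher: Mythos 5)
Your reduction of the Corollary to the single equality
$$H\left(\rho\right)\left(EM^{-1}\left[M,\partial M\right]_{\bq}\right)=H\left(\rho^\tau\right)\left(EM^{-1}\left[M^\tau,\partial M^\tau\right]_{\bq}\right),$$
followed by an application of Lemma \ref{falbelwangcomparison} and the homomorphism $X\circ FW$, is exactly the paper's strategy. The gap is in how you propose to prove that equality. You split an adapted fundamental cycle as $z=z_1+z_2$ along $\Sigma$, observe that on one piece the $\mathrm{SU}(2,1)$-labels for $\rho^\tau$ are those for $\rho$ conjugated by $A^{-1}$, and then ``absorb'' this conjugation into a change of the evaluation basepoint, invoking the fact that conjugation and basepoint change act trivially on homology. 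But that fact applies to \emph{cycles}, and neither $z_1$ nor $z_2$ is a cycle: $\partial z_2$ contains the trace over $\Sigma$ (plus cusp terms). Applying the conjugation/basepoint-change prism $P$ to the $z_2$-part produces, besides a boundary, the error term $P(\partial(z_2\mbox{-part}))$, a $3$-chain concentrated over the image of $\Sigma$ and the cusps, and showing that this term is negligible in homology is precisely the content of the theorem --- it is not covered by the formal triviality of conjugation on $H_*(BG)$. Your closing paragraph acknowledges that the chain-level bookkeeping is the delicate point but then defers it to \cite{Ku2}; that reference does not carry out this bookkeeping.

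What the paper (following \cite{Ku2}, and spelled out in the proof of Corollary \ref{corbfg}, of which Corollary \ref{corfw} is declared a verbatim variant with $\mathrm{SL}(3,\bc)$ replaced by $\mathrm{SU}(2,1)$) actually does is avoid the chain-level splitting entirely: one glues $M$ and $M^\tau$ along the common complement of a product neighborhood of $\Sigma$ to form a space $X$ containing the mapping torus $T^\tau$, so that $i_{M*}[M,\partial M]-i_{M^\tau*}[M^\tau,\partial M^\tau]=i_{T^\tau*}[T^\tau,\partial T^\tau]$ in $H_3(X,\partial X)$; the finite order of $\tau$ is then used to pass to a finite cyclic cover $\widehat{X}$ in which $T^\tau$ is covered by $\Sigma\times S^1$, the transfer map (this is where $\bq$-coefficients enter, not an averaging of the $\Sigma$-trace) reduces the claim to the vanishing of the image of $[\Sigma\times S^1,\partial\Sigma\times S^1]$, and that image dies because it factors through $H_3(\Sigma,\partial\Sigma)=0$. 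If you want to salvage your direct approach you would need an independent argument that your error term over $\Sigma$ vanishes in $H_3(B\mathrm{SU}(2,1)^{comp},\bq)$, which amounts to reproving this last vanishing; as written, the proposal does not establish it.
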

\begin{proof}The proof is essentially the same as the one of \hyperref[corbfg]{Corollary \ref*{corbfg}} below,
which in turn is essentially the same as that for \cite[Theorem
1]{Ku2}. Therefore we omit the proof at this point and just
mention that literally the same argument (just replacing
$\mathrm{SL}(3,\bc)$ by $\mathrm{SU}(2,1)$) as given below in the
proof of \hyperref[corbfg]{Corollary \ref*{corbfg}} shows that
$$H\left(\rho\right)\left(EM^{-1}\left[M,\partial M\right]_{\bq}\right)=H\left(\rho^\tau\right)\left(EM^{-1}\left[M^\tau,\partial M^\tau\right]_{\bq}\right)$$ and in view of \hyperref[falbelwangcomparison]{Lemma \ref*{falbelwangcomparison}} this implies $\beta_{FW}\left(M\right)\otimes 1=\beta_{FW}\left(M^\tau\right)\otimes 1$.\end{proof}
\subsection{Tetrahedra of flags}
\begin{defi}\label{flags} {\bf (\cite[Section 2]{bfg})}: Let
$${\mathcal{F}}l\left({\mathbb C}\right)=
\left\{\left(\left[x\right],\left[f\right]\right)\in {{P}}\left({\mathbb C}^3\right)\times {{P}}\left({\mathbb C}^{3*}\right): f\left(x\right)=0\right\}$$ where $P(V)$ denotes the projectivization of $V$.
For $$T=\left(\left(\left[x_0\right],\left[f_0\right]\right),
\left(\left[x_1\right],\left[f_1\right]\right), \left(\left[x_2\right],\left[f_2\right]\right),
\left(\left[x_3\right],\left[f_3\right]\right)\right)\in C_3\left(\fl l\left({\mathbb C}\right)\right)$$
and $a\not=b\in \left\{0,1,2,3\right\}$ we define $z_{ab}\in{\mathbb C}$ as follows:
choose $k,l\in\left\{0,1,2,3\right\}$ such that $\left(a,b,k,l\right)$ is a positive permutation of $\left(0,1,2,3\right)$ and let $$
z_{ab}:=\frac{f_a\left(x_k\right)det\left(x_a,x_b,x_l\right)}{f_a\left(x_l\right)det\left(x_a,x_b,x_k\right)}.$$
Then define $$\beta:C_3\left(\fl
l\right)\rightarrow {\mathcal{P}}\left({\mathbb C}\right)$$
by $$\beta\left(T\right):=\left[z_{01}\right]+\left[z_{10}\right]+\left[z_{23}\right]+\left[z_{32}\right].$$
\end{defi}

Let $H_3\left(\fl l\right):=H_3\left(C_*\left(\fl
l\right)_G\right)$ for the canonical action of
$G:=\mathrm{SL}(3,\bc)$ on $\fl l$. Then
\cite[Proposition 3.3]{bfg} implies that $\beta$ yields a
well-defined map $\beta_*:H_3\left(\fl
l\right)\rightarrow{\mathcal{P}}\left(\bc\right)$.

Moreover, if $M=\Gamma\backslash H^3_{\br}$ is a hyperbolic
$3$-manifold and $h:{\bc}P^1\rightarrow \fl l$ a map equivariant
with respect to {\em some} homomorphism
$\Gamma\rightarrow
\mathrm{SL}(3,{\bc})$, then one obtains a well-defined
chain map $h_*:C_*\left({\bc}P^1\right)_\Gamma\rightarrow
C_*\left(\fl l\right)_{\mathrm{SL}(3,{\bc})}$.

The following definition is due to Bergeron-Falbel-Guilloux (\cite{bfg}).

\begin{defi}\label{bfgbloch}If $M=\cup_{i=1}^r T_i$ is an ideal triangulation
of a hyperbolic $3$-manifold, $\rho:\pi_1M\rightarrow
\mathrm{SL}(3,\bc)$ a representation and
$$h:{\mathbb C}P^1\rightarrow \fl
l\left({\mathbb C}\right)$$ a $\rho$-equivariant map, then define
$$\beta_h\left(M\right):=\sum_{i=1}^r
\beta_*\left(h_*\left(P_0^i, P_1^i, P_2^i,
P_3^i\right)\right)\in{\mathcal{P}}\left(\bc\right),$$ where
$P_0^i,P_1^i,P_2^i,P_3^i$ are the vertices of $T_i$.\end{defi} We
remark (compare \cite[Proposition 10.79]{bh}) that $\fl l$ corresponds to the set $\mathrm{SL}(3,\bc)/P$ of Weyl chambers in
$\partial_\infty (\mathrm{SL}(3,\bc)/\mathrm{SU}(3))$ where $P$ is a minimal parabolic subgroup. If
$\rho:\pi_1M\rightarrow \mathrm{SL}(3,\bc)$ is a reductive
representation, then there exists a $\rho$-equivariant harmonic
map $H^3_{\br}= \widetilde{M}\rightarrow
\mathrm{SL}(3,\bc)/\mathrm{SU}(3)$ (see \cite{cor},\cite{lab}). 
 But it is not easy to prove the existence of a $\rho$-equivariant boundary map
${\bc}P^1\rightarrow\partial_\infty
(\mathrm{SL}(3,\bc)/\mathrm{SU}(3))$.
It should be easier to show the existence of the boundary map $h:{\bc}P^1\rightarrow \mathrm{SL}(3,\bc)/P=\fl l$ instead.
We will not deal with that issue here, in this paper, we always assume the existence of such a map.\\

{\bf Relation to hyperbolic Bloch invariant,
\cite[Section 3.7]{bfg}.} If $M$ is an orientable hyperbolic
manifold, then by Culler's Theorem its monodromy representation
$\Gamma\rightarrow \mathrm{PSL}(2,\bc)$ lifts to
$\mathrm{SL}(2,\bc)$. Composition with the (unique) irreducible
representation $\mathrm{SL}(2,\bc)\rightarrow \mathrm{SL}(3,\bc)$
yields representations $\rho:\Gamma\rightarrow
\mathrm{SL}(3,\bc)$. In this case there is a canonically
(independent of $\Gamma$) defined $\rho$-equivariant map
${\bc}P^1\rightarrow\fl l$ as follows.

Recall that the irreducible 3-dimensional representation of
$\mathrm{SL}(2,\bc)$ can be defined as follows. Consider the
$\bc$-vector space of complex homogeneous polynomials of degree 2
in two variables. This is a 3-dimensional vector space $V$
generated by $x^2, xy$ and $y^2$. $\mathrm{SL}(2,\bc)$ acts by
$(AP)(x,y):=P(A^{-1}(x,y))$. We may consider its projectivization
$P(V)$ and the projectivization of the dual space $P(V^*)$, whose
elements we will write as homogeneous column vectors. Then a
$\rho$-equivariant map $h:{\bc}P^1\rightarrow \fl l\subset
P(V)\times P(V^*)$ is given by
$$h(\left[x,y\right]):=(\left[x^2,xy,y^2\right],\left[\frac{1}{2}y^2,-xy,\frac{1}{2}x^2\right]^T).$$
(\cite{bfg} gives an apparently different construction which however - after computation - yields the same map $h$.) It turns out that the so-defined $\beta_h(M)$ coincides with 4 times the usual Bloch invariant $\beta(M)$ of the hyperbolic $3$-manifold $M$. Indeed, if $T=(P_0,P_1,P_2,P_3)\in C_3({\bc}P^1)$ is an ideal simplex of cross ratio $t$, then explicit computation shows that the simplex $h(T)=(h(P_0),h(P_1),h(P_2),h(P_3))\in C_3(\fl l)$ satisfies $z_{01}(h(T))=z_{10}(h(T))=z_{23}(h(T))=z_{32}(h(T))=t$.\\

{\bf Relation to CR Bloch invariant, \cite[Section
3.8]{bfg}.} If $D$ is the developing map of a reductive
representation $\pi_1M\rightarrow \mathrm{SU}(2,1)$ and $h$ is the
composition of $\partial_\infty D$ with the map $S^3\rightarrow\fl
l \left({\mathbb C}\right)$ given in \cite[Section 3.8]{bfg}, then
$\beta_h\left(M\right)$ coincides with $\beta_{FW}\left(M\right)$.

\begin{lemma}\label{bfgcomparison}Let $M$ be a finite-volume hyperbolic $3$-manifold, and $h:{\mathbb C}P^1\rightarrow \fl l
\left({\mathbb C}\right)$ equivariant with respect to some
homomorphism
$\pi_1M\rightarrow
\mathrm{SL}(3,{\bc})$. Then

$$\beta_{h}\left(M\right)= \beta_*h_*H\left(ev\right)EM^{-1}\left[M,\partial M\right].$$
\end{lemma}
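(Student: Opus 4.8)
The plan is to follow the proof of \hyperref[falbelwangcomparison]{Lemma \ref*{falbelwangcomparison}} almost verbatim, replacing the boundary map $\partial_\infty D$ there by the given equivariant map $h$, the orbit complex $C_*\left(\partial_\infty H^2_{\bc}\right)_{\mathrm{SU}(2,1)}$ by $C_*\left(\fl l\left(\bc\right)\right)_{\mathrm{SL}(3,\bc)}$, and the composite $X\circ FW$ by the map $\beta$ of \hyperref[flags]{Definition \ref*{flags}}. First I would reproduce the commutative diagram used in the proof of \hyperref[falbelwangcomparison]{Lemma \ref*{falbelwangcomparison}}: its left column, which relates a relative cycle $z$ representing $\left[M,\partial M\right]$ — viewed successively in $C_*\left(\mathrm{Dcone}\left(\cup_{i=1}^s\partial_iM\rightarrow M\right)\right)$, $C_*\left(M\cup\left\{\Gamma c_1,\ldots,\Gamma c_s\right\}\right)$, $\hat C_*^{str,x_0}\left(M\right)$ and $C_*\left(B\Gamma^{comp}\right)$, and which includes the chain map $\hat C:\hat C_*^{str,x_0}\left(M\right)\rightarrow\hat C_*^{str,c_0}\left(M\right)$ of \cite[Lemma 3.3.4]{Kue}, is identical to the one there (its derivation being that of \cite[Theorem 4.0.2]{Kue}). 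The only new arrow is the chain map $h_*\circ ev_{c_0}:C_*\left(B\Gamma^{comp}\right)\rightarrow C_*\left(\fl l\left(\bc\right)\right)_{\mathrm{SL}(3,\bc)}$, where $ev_{c_0}$ is the evaluation into the $\Gamma$-orbit complex $C_*\left(\partial_\infty\widetilde{M}\right)_\Gamma$ already used in the proof of \hyperref[falbelwangcomparison]{Lemma \ref*{falbelwangcomparison}}; this composite makes sense because $h$ is equivariant with respect to the given homomorphism $\Gamma=\pi_1M\rightarrow\mathrm{SL}(3,\bc)$. Chasing $z$ around the diagram then shows, exactly as in the Falbel--Wang case and after identifying $\hat C_*^{str,c_0}\left(M\right)$ with a subcomplex of $C_*\left(\partial_\infty\widetilde{M}\right)_\Gamma$ via ideal vertices, that $H\left(ev_{c_0}\right)\left(EM^{-1}\left[M,\partial M\right]\right)$ is represented by $\hat C\left(str\left(z+\mathrm{Cone}\left(\partial z\right)\right)\right)$, which by \cite[Lemma 3.3.4]{Kue} is an ideal fundamental cycle.

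Next, I would bring in an ideal triangulation $M=\cup_{i=1}^r T_i$ whose simplices $T_i$ have ideal vertices $P_0^i,\ldots,P_3^i\in\partial_\infty\widetilde{M}$. By (the proof of) \cite[Lemma 3.4.1]{Kue}, the ideal fundamental cycle $\hat C\left(str\left(z+\mathrm{Cone}\left(\partial z\right)\right)\right)$ and the triangulation cycle $\sum_{i=1}^r\left(P_0^i,\ldots,P_3^i\right)$ are homologous in $C_*\left(\partial_\infty\widetilde{M}\right)_\Gamma$, so there is $w\in C_4\left(\partial_\infty\widetilde{M}\right)_\Gamma$ with $\partial w=\hat C\left(str\left(z+\mathrm{Cone}\left(\partial z\right)\right)\right)-\sum_{i=1}^r\left(P_0^i,\ldots,P_3^i\right)$. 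Since $h_*$ is a chain map, applying it and then $\beta$, and using that $\beta$ vanishes on boundaries in ${\mathcal{P}}\left(\bc\right)$ — which is exactly the content of \cite[Proposition 3.3]{bfg} that $\beta$ descends to a well-defined map $\beta_*:H_3\left(\fl l\right)\rightarrow{\mathcal{P}}\left(\bc\right)$ — one obtains in ${\mathcal{P}}\left(\bc\right)$
$$\beta\left(h_*\hat C\left(str\left(z+\mathrm{Cone}\left(\partial z\right)\right)\right)\right)=\sum_{i=1}^r\beta\left(h_*\left(P_0^i,\ldots,P_3^i\right)\right)=\beta_h\left(M\right),$$
the last equality by \hyperref[bfgbloch]{Definition \ref*{bfgbloch}}. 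Since the left-hand side represents $\beta_*h_*H\left(ev_{c_0}\right)\left(EM^{-1}\left[M,\partial M\right]\right)$, this proves the lemma.

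Because the argument is almost the same bookkeeping as for \hyperref[falbelwangcomparison]{Lemma \ref*{falbelwangcomparison}}, I do not expect a serious obstacle; the genuine inputs are only (i) that the evaluation/straightening comparison machinery of \cite{Kue}, developed there for $\br$-rank one spaces and hence available for the hyperbolic $3$-manifold $M$, transports without change, and (ii) that \cite[Proposition 3.3]{bfg} supplies the vanishing of $\beta$ on boundaries, playing here the role that \cite[Lemma 3.2]{FW} (via \cite[Theorem 5.2]{falvol}) played for $X\circ FW$. The two points needing a little care are that $h$ is assumed equivariant only for $\Gamma\rightarrow\mathrm{SL}(3,\bc)$ — so that $h_*$ descends only to the $\Gamma$-orbit complex, which is why one uses $ev_{c_0}$ rather than the $G$-equivariant evaluation of Section 8 — and that $\beta$ is applied only to configurations of flags on which it is defined, which is arranged exactly as in \cite{bfg} and is implicit in the well-definedness of $\beta_*$ on all of $H_3\left(\fl l\right)$.
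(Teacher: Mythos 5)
Your proposal is correct and follows essentially the same route as the paper's own proof: both transport the commutative diagram and the ideal-fundamental-cycle comparison from Lemma \ref{falbelwangcomparison} (ultimately \cite[Lemma 3.3.4 and Lemma 3.4.1]{Kue}), and both replace the vanishing of $X\circ FW$ on boundaries by \cite[Proposition 3.3]{bfg} to pass from the ideal fundamental cycle to the triangulation cycle $\sum_i h_*(P_0^i,\ldots,P_3^i)$. Your added remark on why only the $\Gamma$-orbit evaluation $ev_{c_0}$ is available (since $h$ is merely $\rho$-equivariant) is a correct clarification of a point the paper leaves implicit.
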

\begin{proof}
Let $c_0\in\partial_\infty H^3_{\br}$. The commutative diagram in
the proof of \hyperref[falbelwangcomparison]{Lemma
\ref*{falbelwangcomparison}} shows that
$H\left(ev_{c_0}\right)EM^{-1}\left[M,\partial M\right]$ is
represented by
$cr\left(\hat{C}\left(str\left(z+\mathrm{Cone}\left(\partial
z\right)\right) \right)\right)$ whenever $z\in
Z_*\left(\overline{M},\partial\overline{M}\right)$ is a relative
fundamental cycle.

In the proof of \hyperref[falbelwangcomparison]{Lemma \ref*{falbelwangcomparison}} we have seen that
the homology classes of $cr\left(\hat{C}\left(str\left(
z+\mathrm{Cone}\left(\partial z\right)\right)\right)\right)$ and
of $\sum_{i=1}^r cr\left(T_i \right)$ are the same in
$H_*\left(C_*\left(\partial_\infty\widetilde{M}\right)_\Gamma\right)$,
whenever $M=\cup_{i=1}^r T_i$ is an ideal triangulation. Thus
there is some $w\in C_*\left(
\partial_\infty\widetilde{M}\right)_\Gamma$ with
$$\partial w=cr\left(\hat{C}\left(str\left(z+\mathrm{Cone}\left(\partial z\right)
\right)\right)\right)-\sum_{i=1}^r cr\left(T_i
\right).$$
$h_*$ is a chain map by construction. Moreover,
by \cite[Proposition 3.3]{bfg} (following from \cite[Theorem 5.2]{falvol}) we have that $\beta$ maps boundaries to zero,
thus $\beta\left(\partial h_*\left(w\right)\right)=0$,
which implies
$$\beta\left(h_*\left(cr\left(\hat{C}\left(str\left(z+\mathrm{Cone}\left(\partial z\right)\right)\right)\right)\right)\right)-\sum_{i=1}^r
\beta\left(h_*\left(cr\left(T_i\right)\right)\right)
=
\beta\left(\partial h_*\left(w\right)\right)=0.$$
Thus the cycle $\sum_{i=1}^r
\beta\left(h_*\left(cr\left(T_i\right)\right)\right)$ represents the homology class
$$\beta_*h_*H\left(ev_{c_0}\right)EM^{-1}\left[M,\partial M\right],$$ which
implies the claim.

\end{proof}
For the following corollary we will use the notations introduced
before in \hyperref[corfw]{Corollary \ref*{corfw}}. The following
corollary applies for example when $M^\tau$ is a (generalized)
mutation of $M$ and $\rho:\pi_1M\rightarrow \mathrm{SL}(3,\bc)$ is
the composition of the inclusion $\pi_1M\subset
\mathrm{SL}(2,\bc)$ with some representation
$\mathrm{SL}(2,\bc)\rightarrow \mathrm{SL}(3,\bc)$. In this case
$\rho^\tau$ is obtained from the composition of the inclusion
$\pi_1M^\tau\subset \mathrm{SL}(2,\bc)$ (see \cite[Section
2]{Ku2}) with the same representation
$\mathrm{SL}(2,\bc)\rightarrow \mathrm{SL}(3,\bc)$ and we have
$h=h^\tau$.
\begin{cor}\label{corbfg}Let $M$ be a compact, orientable $3$-manifold, $\Sigma\subset M$ a properly embedded, incompressible, boundary-incompressible, 2-sided surface, $\tau:\Sigma\rightarrow\Sigma$ an orientation-preserving diffeomorphism of finite order and $M^\tau$ the manifold obtained by cutting $M$ along $\Sigma$ and regluing via $\tau$.

If $M$ and $M^\tau$ are hyperbolic and if the parabolics-preserving representation $\rho:\pi_1M\rightarrow \mathrm{SL}(3,\bc)$ satisfies $\rho\left(\tau_*\sigma\right)=A\rho\left(\sigma\right)A^{-1}$ for some $A\in \mathrm{SL}(3,\bc)$ and all $\sigma\in\pi_1\Sigma$, then $$\beta_{h}\left(M\right)\otimes 1=\beta_{h^\tau}\left(M^\tau\right)\otimes 1\in{\mathcal{B}}(\bc)\otimes\bq$$
when $h$ and $h^\tau$ are $\rho$- resp.\  $\rho^\tau$-equivariant maps from $\partial_\infty { H}_\br^3$ to ${\mathcal{F}}l$.\end{cor}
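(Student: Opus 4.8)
The plan is to reduce the assertion, by means of \hyperref[bfgcomparison]{Lemma \ref*{bfgcomparison}}, to an equality of pushed-forward relative fundamental classes in the homology of $\mathrm{SL}(3,\bc)$, and then to deduce that equality by the cut-and-paste argument of \cite[Theorem 1]{Ku2}. First recall from \cite[Section 2]{Ku2} how $\rho^\tau:\pi_1M^\tau\rightarrow \mathrm{SL}(3,\bc)$ arises: viewing $M$ and $M^\tau$ as the two ways of gluing the pieces of $M$ cut along $\Sigma$ (by the identity, resp.\ by $\tau$), Seifert--van Kampen presents $\pi_1M$ and $\pi_1M^\tau$ as amalgamated products (if $\Sigma$ separates) or HNN extensions (if not) over $\pi_1\Sigma$, the two presentations differing only by precomposing the edge homomorphisms from $\pi_1\Sigma$ with $\tau_*$; the hypothesis $\rho(\tau_*\sigma)=A\rho(\sigma)A^{-1}$ then produces $\rho^\tau$ which on the fundamental group of one complementary piece equals $\rho$, and on the other equals $c_A\circ\rho$, conjugation by $A$. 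As observed just before \hyperref[corfw]{Corollary \ref*{corfw}}, $\rho^\tau$ is again reductive and parabolics-preserving, so a $\rho^\tau$-equivariant $h^\tau:\bc P^1\rightarrow \fl l$ exists and \hyperref[bfgcomparison]{Lemma \ref*{bfgcomparison}} applies to $M^\tau$. Since $\beta_h(M)$ depends on $\rho$ only through $H(\rho)(EM^{-1}[M,\partial M])$ --- everything else in \hyperref[bfgcomparison]{Lemma \ref*{bfgcomparison}} being the fixed composite of evaluation and flag maps --- and similarly for $M^\tau$, the claim reduces to
$$H(\rho)\bigl(EM^{-1}[M,\partial M]_\bq\bigr)=H(\rho^\tau)\bigl(EM^{-1}[M^\tau,\partial M^\tau]_\bq\bigr).$$

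To prove this I would run the proof of \cite[Theorem 1]{Ku2} verbatim, replacing $\mathrm{SL}(2,\bc)$ throughout by $\mathrm{SL}(3,\bc)$; the target group plays no role in that argument, so the substitution is harmless. Concretely: choose a relative fundamental cycle for $(M,\partial M)$ realizing $\Sigma$ as a subcomplex and adapted to the peripheral structure, so that cutting along $\Sigma$ and regluing by $\tau$ turns it into a relative fundamental cycle for $(M^\tau,\partial M^\tau)$. Pushing the two cycles forward by $B\rho$, resp.\ $B\rho^\tau$, and applying the Eilenberg--MacLane map, the contributions of the complementary pieces agree up to conjugation by $A$ and therefore represent the same homology class, inner automorphisms acting trivially on group homology; the remaining difference is represented by a cycle lying in the image of $H_3(B\pi_1\Sigma,\bq)$. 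But $\Sigma$, being an incompressible, boundary-incompressible surface in the irreducible manifold $M$, is an aspherical $2$-complex, so $H_3(B\pi_1\Sigma,\bq)=H_3(\Sigma,\bq)=0$ and the difference vanishes. The finite order of $\tau$ is used to make the regluing compatible with the cone-point (cuspidal) construction defining $B\Gamma^{comp}$ and $EM^{-1}$, and to match the boundary contributions --- one may, for instance, average over the cyclic group $\langle\tau\rangle$, which is why the equality is asserted only after $\otimes\bq$. Combined with the reduction above and \hyperref[bfgcomparison]{Lemma \ref*{bfgcomparison}}, this gives $\beta_h(M)\otimes1=\beta_{h^\tau}(M^\tau)\otimes1$ in ${\mathcal B}(\bc)\otimes\bq$.

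The homological vanishing is not the hard part --- it is immediate from $\dim\Sigma=2$. The real work is the boundary bookkeeping: one must set up the relative fundamental cycle, the graph-of-groups decomposition of $\pi_1M$, and the cone points so that cutting $M$ along $\Sigma$ (whose boundary lies in $\partial M$) and regluing by the finite-order $\tau$ respects all the structures ($B\Gamma^{comp}$, the evaluation map, and the flag map $h$) feeding into \hyperref[bfgcomparison]{Lemma \ref*{bfgcomparison}}. This is exactly what is done in \cite[Section 2 and Theorem 1]{Ku2}, and the only thing left to verify is that no step of that argument uses that the target is $\mathrm{SL}(2,\bc)$ rather than $\mathrm{SL}(3,\bc)$ --- which it does not.
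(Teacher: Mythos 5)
Your overall strategy coincides with the paper's: reduce via Lemma \ref{bfgcomparison} to the single identity $H(\rho)\left(EM^{-1}[M,\partial M]_{\bq}\right)=H(\rho^\tau)\left(EM^{-1}[M^\tau,\partial M^\tau]_{\bq}\right)$ and establish it by rerunning \cite[Theorem 1]{Ku2} with $\mathrm{SL}(2,\bc)$ replaced by $\mathrm{SL}(3,\bc)$, which is indeed harmless since the target group plays no role in that argument. However, your own sketch of the cut-and-paste step misstates the mechanism in one essential place: the residual difference is not ``a cycle lying in the image of $H_3(B\pi_1\Sigma,\bq)=0$.'' What the argument actually produces is the relation $i_{M*}[M,\partial M]-i_{M^\tau*}[M^\tau,\partial M^\tau]=i_{T^\tau*}[T^\tau,\partial T^\tau]$ in $H_3(X,\partial X)$, where $X$ is the union of $M$ and $M^\tau$ along their common complement and $T^\tau$ is the mapping torus of $\tau$; so the obstruction lives a priori in the image of $H_3(T^\tau,\partial T^\tau)\cong\bz$, with $\pi_1T^\tau$ an extension of $\bz$ by $\pi_1\Sigma$, and this group's degree-$3$ homology does not vanish for dimension reasons. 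Killing the obstruction is exactly where the finite order of $\tau$ enters: one passes to a finite cyclic cover $\widehat X$ in which $T^\tau$ unwraps to $\Sigma\times S^1$, applies the transfer map (this, rather than an averaging over $\langle\tau\rangle$, is why the conclusion is only asserted after $\otimes\bq$), and only then does the relevant class factor through $H_3(\Sigma,\partial\Sigma)=0$. Relatedly, your direct chain-level argument (``contributions of the pieces agree up to conjugation by $A$, inner automorphisms act trivially'') does not obviously close up into a comparison of cycles, since regluing by $\tau$ does not carry the trace of a fundamental cycle on one side of $\Sigma$ to a chain compatible with the other side unless that trace is $\tau$-invariant; the mapping-torus formulation is precisely how \cite{Ku2} and the paper avoid this. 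Since you ultimately defer to \cite{Ku2} for the details, the proof goes through, but as written your paraphrase of that step would not.
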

\begin{proof}The proof is essentially the same as for \cite[Theorem 1]{Ku2}. Since $\Sigma$ is a 2-sided, properly embedded surface, it has a
neighborhood $N\simeq\Sigma\times\left[0,1\right]$ in $M$, and a
neighborhood $N^\tau\simeq\Sigma\times\left[0,1\right]$ in
$M^\tau$. The complements $M-int\left(N\right)$ and
$M^\tau-int\left(N^\tau\right)$ are diffeomorphic and we let $X$
be the union of $M$ and $M^\tau$ along this identification of
$M-int\left(N\right)$ and $M^\tau-int\left(N^\tau\right)$. The
union of $N$ and $N^\tau$ yields a copy of the mapping torus
$T^\tau$ in $X$. We have
\begin{equation}\label{a}i_{M*}\left[M,\partial M\right]-i_{M^\tau
*}\left[M^\tau,\partial M^\tau\right]= i_{T^\tau
*}\left[T^\tau,\partial T^\tau\right]\in H_3 (X,\partial X,\mathbb
Z).\end{equation} The made assumption implies that $\rho$ and
$\rho^\tau$ extend to a representation $\rho_X:\pi_1X\rightarrow
\mathrm{SL}\left(3,\bc\right)$.

As in the proof of \cite[Theorem 1]{Ku2} we have a finite cyclic
covering $\widehat{X}\rightarrow X$ such that $\widehat{X}$
contains a copy of $\Sigma\times S^1$ finitely covering
$T^\tau\subset X$. Let
$\widehat{M},\widehat{M^\tau}\subset\widehat{X}$ be the preimages
of $M$ and $M^\tau$. Application of the transfer map yields
\begin{equation}\label{tr}
i_{\widehat{M}*}\left[\widehat{M},\partial \widehat{M}\right]-
i_{\widehat{M}^\tau *}\left[\widehat{M}^\tau,\partial
\widehat{M}^\tau \right]= i_{\Sigma\times{\bf S}^1
*}\left[\Sigma\times{\bf S}^1,\partial\Sigma\times{\bf
S}^1\right].\end{equation} Again as in the proof of \cite[Theorem
1]{Ku2} we obtain a representation
$\rho_{\widehat{X}}:\pi_1\widehat{X}\rightarrow
\mathrm{SL}(3,\bc)$ and - because the lift $\widehat{X}$ is chosen
such that
$\rho_{\widehat{X}}\left(\pi_1\partial\widehat{X}\right)$ consists
of parabolics - a continuous map
$$B\rho_{\widehat{X}}:\left(B\pi_1\widehat{X}\right)^{comp}\rightarrow
B\mathrm{SL}(3,\bc)^{comp}.$$ The classifying map
$\Psi_{\widehat{X}}:\widehat{X}\rightarrow | B\pi_1{\widehat{X}}|$
extends to
$$\Psi_{\widehat{X}}:\mathrm{Dcone}\left(\cup_{i=1}^s\partial_i\widehat{X}\rightarrow
\widehat{X}\right) \rightarrow
|\left(B\pi_1{\widehat{X}}\right)^{comp}|$$ and the same argument
as in \cite{Ku2} shows that $\left(|
B\rho_{\widehat{X}}|\Psi_{\widehat{X}}i_{\Sigma\times{\bf
S}^1}\right)_*$ factors over
$H_3\left(\Sigma,\partial\Sigma\right)=0$ and is therefore $0$.
Thus \hyperref[tr]{Equation \ref*{tr}} implies
{\setlength\arraycolsep{2pt}
\begin{eqnarray*}
\left(| B\rho_{\widehat{X}}|\Psi_{\widehat{X}}i_{\widehat{M}}\right)_*
\left[\widehat{M},\partial\widehat{M}\right] &=& \left(
(| B\rho_{\widehat{X}}|\Psi_{\widehat{X}}
i_{\widehat{M}^\tau}\right)_*\left[\widehat{M}^\tau,\partial \widehat{M}^\tau\right] \\
&\in& H_3\left(| B\mathrm{SL}\left(3,\bc\right)^{comp}|\right).\end{eqnarray*}}

Again following the same argument from \cite{Ku2} we conclude
$$H\left(\rho\right)\left(EM^{-1}\left[M,\partial
M\right]_{\bq}\right)=H\left(\rho^\tau\right)\left(EM^{-1}\left[M^\tau,\partial
M^\tau\right]_{\bq}\right).$$ The $\rho$-equivariance of $h$
implies that $h\circ
ev_{\mathrm{SL}(2,\bc)}=ev_{\mathrm{SL}(3,\bc)}\circ \rho$, hence
$h_*H\left(ev\right)=H\left(ev\right)H\left(\rho\right)$ and thus
{\setlength\arraycolsep{2pt}
\begin{eqnarray*}
\beta_h\left(M\right)\otimes
1&=&\beta_*h_*H\left(ev\right)EM^{-1}\left[M,\partial M\right]_{\bq} \\
&=& \beta_*H(ev)H(\rho)\left(EM^{-1}\left[M,\partial M\right]_{\bq}\right) \\
&=& \beta_*H(ev)H\left(\rho^\tau\right)\left(EM^{-1}\left[M^\tau,\partial M^\tau\right]_{\bq}\right)\\
&=& \beta_*h_*H\left(ev\right)EM^{-1}\left[M^\tau,\partial M^\tau\right]_{\bq}\\
&=& \beta_h\left(M^\tau\right)\otimes 1
\end{eqnarray*}} in view of \hyperref[bfgcomparison]{Lemma \ref*{bfgcomparison}}.\end{proof}

The so-called Bloch regulator map
$$\rho:{\mathcal{B}}(\bc)\rightarrow \bc/\bq$$
is known to send the Bloch invariant $\beta(M)$ of hyperbolic
$3$-manifolds to
$\frac{i}{2\pi^2}(\mathrm{Vol}(M)+i\mathrm{CS}(M))\ mod\ \bq$, as
was proved in \cite[Theorem 1.3]{NY}. In other words, the
imaginary part of $\rho(\beta(M))$ determines the volume (and the
real part determines the Chern-Simons invariant mod $\bq$). Thus
it is natural to define the volume of flag structures as (a
multiple of) the imaginary part of $\rho(\beta_h(M))$.
Bergeron-Falbel-Guilloux in fact define in \cite[Section 3.6]{bfg}
the volume of a flag structure to be
$\frac{2\pi^2}{4}Im(\rho(\beta_h(M))$. The analogously defined
volume of CR structures is always zero by \cite[Theorem 3.12]{FW}
but the volume of flag structures is a nontrivial and potentially
interesting invariant. \hyperref[corfw]{Corollary \ref*{corfw}} of
course implies its invariance under the cut-and-paste operation
described in the statement of the corollary.


\begin{thebibliography}{99}

\bibitem{bfg} N.\ Bergeron, E.\ Falbel and A.\ Guilloux, {\em Tetrahedra of flags, volume and homology of $\mathrm{SL}(3)$}, arXiv:1101.2742.

\bibitem{Bo1} A.\ Borel, {\em Sur la cohomologie des espaces fibr\'es principaux et des espaces homog\`enes de groupes de Lie compactes}, Ann.\ Math.\ 57 (1953), 115-207.

\bibitem{Bo2} A\. Borel, {\em Introduction aux groupes arithm\'{e}tiques}, Publications de l'Institut de Math\'{e}matique de l'Universit\'{e} de Strabourg, XV. Actualit\'{e}s Scientifiques et Industrielles, No.\ 1341. Hermann, Paris (1969).

\bibitem{Bo3} A.\ Borel, {\em Stable real cohomology of arithmetic groups}, Ann.\ Sci.\ \'{E}cole Norm.\ Sup.\ (4) 7 (1974), 235-272.

\bibitem{BJ} A.\ Borel and L.\ Ji, {\em Compactifications of symmetric and locally symmetric spaces}, Mathematics: Theory \& Applications, Birkh\"{a}user Boston Inc., Boston, MA, (2006).

\bibitem{Bre93}G.\ E.\ Bredon, {\em Topology and Geometry}, Graduate Texts in Mathematics, 139, Springer-Verlag, New York, (1993).

\bibitem{bh}M.\ R.\ Bridson, A.\ Haefliger, {\em Metric spaces of non-positive curvatute}, Grundlehren der Mathematischen Wissenschaften 319, Springer-Verlag, Berlin, (1999).

\bibitem{bur}J.\ I.\ Burgos, {\em The regulators of Beilinson and Borel}, CRM Monograph Series 15, AMS (2002).

\bibitem{Cartan} H.\ Cartan, {\em La transgession dans un groupe de Lie et dans un espace fibr\'e principal}, Colloque de topologie (epaces fibr\'es), Masson et Cie (1950), 57-71.

\bibitem{CLT}D.\ Cooper, D.\ Long and S.\ Tillmann, {\em On convex projective manifolds and cusps}, arXiv:1109.0585.

\bibitem{cor}K.\ Corlette, {\em Flat $G$-bundles with canonical metrics}, J.\ Differential Geom.\ 28 (1988), 361-382.

\bibitem{ds}J.\ L.\ Dupont and H.\ Sah, {\em Scissors Congruences II}, J.\ Pure and App.\ Algebra 44 (1987), 159-195.

\bibitem{falvol} E.\ Falbel, {\em A volume function for spherical CR tetrahedra}, arXiv:0907.2305.

\bibitem{FW} E.\ Falbel and Q.\ Wang, {\em A combinatorial invariant for spherical CR structures}, http://xxx.uni-augsburg.de/pdf/1007.5228v1.

\bibitem{FH} W.\ Fulton and J.\ Harris, {\em Representation theory: A first course}, Graduate Texts in Math. 129, Springer, New York (1991).

\bibitem{G}A.\ Goncharov, {\em Volumes of hyperbolic manifolds and mixed Tate motives}, J.A.M.S.\ 12 (1999), 569-618.

\bibitem{gp}V.\ Guillemin and A.\ Pollack, {\em Differential Topology}, Prentice-Hall, Inc., Englewood Cliffs, N.J., 1974.

\bibitem{Hat02}A.\ Hatcher, {\em Algebraic Topology}, Cambridge University Press, Cambridge, (2002).

\bibitem{Hattori} T.\ Hattori, {\em Geometric limit sets of higher rank lattices}, Proc. London Math. Soc. (3) 90 (2005), 689-710.

\bibitem{Ku1}T.\ Kuessner, {\em Generalizations of Agol's inequality and nonexistence of tight laminations}, Pacific J.\ Math.\ 251 (2011), no.\ 1, 109-172.

\bibitem{Ku} T.\ Kuessner, {\em Locally symmetric spaces and K-theory of number fields}, Algebr.\ Geom.\ Topol.\ 12 (2012), no.\ 1, 155-213.

\bibitem{Kue}T.\ Kuessner, {\em Group homology and ideal fundamental cycles}, Topology Proc.\ 40 (2012), 239-258.

\bibitem{Ku2} T.\ Kuessner, {\em Mutation and recombination for hyperbolic 3-manifolds}, J.\ G\"okova Geom.\ Topol.\ 5 (2011), 20-30.

\bibitem{lab}F.\ Labourie, {\em Existence d'applications harmoniques tordues \`a valeurs dans les vari\'et\'es \`a courbure n\'egative}, P.A.M.S.\ 111 (1991), 877-882.

\bibitem{Ma91} G.\ A.\ Margulis, {\em Discrete subgroups of semisimple Lie groups}, Springer-Verlag, Berlin, (1991).

\bibitem{NY}W.\ Neumann and J.\ Yang, {\em Bloch invariants of hyperbolic 3-manifolds}, Duke Math.\ J.\ 96 (1999), 29-59.

\end{thebibliography}
\end{document}